\numberwithin{equation}{section}
\theoremstyle{plain} 
\newtheorem{theorem}{Theorem}[section]
\newtheorem{proposition}[theorem]{Proposition}
\newtheorem{lemma}[theorem]{Lemma}
\theoremstyle{remark}
\newtheorem{remark}[theorem]{Remark}
\title[Linearly edge-reinforced random walks on $\mathbb{Z}_+$]{Almost sure behavior of linearly edge-reinforced random walks on the half-line}
\author[M. Takei]{Masato Takei}
\address{Department of Applied Mathematics, Faculty of Engineering, Yokohama National University, Hodogaya, Yokohama 240-8501, Japan.}
\email{takei-masato-fx@ynu.ac.jp}
\thanks{M.T. is partially supported by JSPS Grant-in-Aid for Young Scientists (B) No. 16K21039, and JSPS Grant-in-Aid for Scientific Research (B) No. 19H01793 and (C) No. 19K03514.} 
\begin{document}
\dedicatory{To the memory of late Professor Munemi Miyamoto}

%\date{\today}

\begin{abstract}
We study linearly edge-reinforced random walks on $\mathbb{Z}_+$, where each edge $\{x,x+1\}$ has the initial weight $x^{\alpha} \vee 1$, and each time an edge is traversed, its weight is increased by $\Delta$. It is known that the walk is recurrent if and only if $\alpha \leq 1$.
The aim of this paper is to study the almost sure behavior of the walk in the recurrent regime. For $\alpha<1$ and $\Delta>0$, we obtain a limit theorem which is a counterpart of the law of the iterated logarithm for simple random walks.
This reveals that the speed of the walk with $\Delta>0$ is much slower than $\Delta=0$. In the critical case $\alpha=1$, our (almost sure) bounds for the trajectory of the walk shows that there is a phase transition of the speed at $\Delta=2$.
\end{abstract}

\maketitle

%\section{Linearly edge-reinforced random walks on the half-line}
\section{Introduction}

Reinforced random walks (RRWs), introduced by Coppersmith and Diaconis, are a class of self-interacting random walks that have attracted many researchers for three decades or more. Quoting from Diaconis \cite{Diaconis88}, 
\begin{quote}
{\it ``It was introduced as a simple model of exploring a new city. At first all routes are equally unfamiliar, and one chooses at random between them. As time goes on, routes that have been traveled more in the past are more likely to be traveled."}
\end{quote}
Consider a finite connected graph, and each edge is given a positive initial weights.
In each step the traveller jumps to an adjacent vertex by traversing an edge, with probability proportional to the weight of that edge. Each time an edge is traversed, its weight is increased by a fixed constant $\Delta>0$ {\it (linear edge-reinforcement)}. We can see that the walk is recurrent, that is, every vertex is visited infinitely often with probability one. The limiting density of the normalized occupation measure on the edges, obtained by Coppersmith and Diaconis in 1986, is found in \cite{Diaconis88} (see also Keane and Rolles \cite{KeaneRolles00}).

In this paper we consider the linearly edge-reinforced random walk (LERRW) on the half-line in recurrent regime, and give almost sure results on how far the traveller is from the origin. We begin with a motivating example. Let $\{S_n\}$ be the symmetric simple random walk on $\mathbb{Z}$, starting at the origin. Notice that $\{|S_n|\}$ is the symmetric simple random walk on $\mathbb{Z}_+=\{0,1,2,\cdots\}$, with a reflection at the origin. This walk is recurrent a.s., that is,
\[ \liminf_{n \to \infty} |S_n|=0, \quad \mbox{and} \quad \limsup_{n \to \infty} |S_n|=+\infty \quad \mbox{a.s..} \]
More precisely, by the law of the iterated logarithm, we have
\begin{align*}
\limsup_{n \to \infty} \dfrac{|S_n|}{\sqrt{2n\log \log n}} = 1 \quad \mbox{a.s..}
\end{align*}
Now consider the LERRW on $\mathbb{Z}_+$,
%process described in the first paragraph
where the initial weights are all one and $\Delta=1$. Let $X_n$ be the position of the walk at time $n$, and assume that $X_0=0$.
%suppose that each edge of the half-line has weight one, and each time an edge is traversed, its weight is increased by one. 
Again $\{X_n\}$ is recurrent a.s. (see \cite{Davis90}), but is quantitatively quite different from $\{|S_n|\}$: Theorem \ref{thm:Takei20Main1} below
%The main result in this article 
implies that 
\begin{align*}
\limsup_{n \to \infty} \dfrac{X_n}{\log_4 n} = 1 \quad \mbox{a.s..}
\end{align*}
In this way, the random walk with reinforcement is much slower than ordinary random walk.
%enjoys the travel much more slowly 

We also discuss how strongly the linear reinforcement affects the long time behavior of the walk on $\mathbb{Z}_+$ with heterogeneous initial weights, where each edge $\{x,x+1\}$ has initial weight $x^{\alpha} \vee 1$. To summarize our aim is to describe phase transitions of the speed in the recurrent regime $\alpha \leq 1$ and $\Delta \geq 0$. 
For $\alpha<1$, we obtain a limit theorem which shows strong slow-down effects from $\Delta=0$, and is a counterpart of the law of the iterated logarithm for simple random walks.  To our best knowledge this kind of results are first for RRWs. On the other hand, for $\alpha=1$, which is critical for recurrence, essential slow-down effects appear only for $\Delta>2$.

\section{Definitions and Results} \label{sec:DefResults}

\subsection{Model} We define the {\it edge-reinforced random walk} (ERRW) on $\mathbb{Z}_+$, denoted by $\boldsymbol{X} =\{X_n\}$, as follows. This process takes values on the vertices of $\mathbb{Z}_+=\{0,1,2,\ldots\}$, and at each step it jumps to one of the nearest neighbors. 
For each $x \in \mathbb{Z}_+$, let $\mathbf{f}_{x} =(f(\ell,x) \colon \ell \in \mathbb{Z}_+)$ be a non-decreasing sequence of positive numbers, called the {\it reinforcement scheme} at $x$. 
For $x \in \mathbb{Z}_+$, let $\phi_n(x)$ be the number of traversals of the edge $\{x,\,x+1\}$ by time $n$, namely
\begin{equation}
\label{def:phi}
 \phi_n(x) := \sum_{i=1}^n 1_{ \{ \{X_{i-1},\,X_i\} = \{x,\,x+1\} \}}. 
\end{equation}
For each $n \geq 0$, the weights at time $n$ are defined by  
\[
w_n (x) = f(\phi_n(x),x)\quad \mbox{for $x \in \mathbb{Z}_+$}, 
\]
We set $w_n(-1) = 0$ for all $n$, which implies a reflection at the origin.
We call $w_0(x)=f(0,x)$ the {\it initial weight} of the edge $\{x,x+1\}$.
Assume that $P(X_0=0)=1$. The transition probability is given by
\begin{align}
P(X_{n+1}=X_n+1\,|\,X_0,\ldots,X_n)&=1-P(X_{n+1}=X_n-1\,|\,X_0,\ldots,X_n) \notag \\
 &=\frac{w_n (X_n)}{w_n(X_n-1)+w_n(X_n)}. \label{eq:ERRWtransition}
\end{align}
%The half-line is the graph with vertex set $\mathbb{Z}_+=\{0,1,2,\ldots\}$ and edge set $E(\mathbb{Z}_+):=\{ \{x,x+1\} : x \in \mathbb{Z}_+ \}$. For each edge $ \{x,x+1\} \in E(\mathbb{Z}_+)$, an initial weight $w_0(x)>0$ is assigned. Let $w_0(-1)=0$; there is reflection at the origin.
%\begin{align*}
%P(X_{n+1}=X_n+1\,|\,X_0,\ldots,X_n)&=1-P(X_{n+1}=X_n-1\,|\,X_0,\ldots,X_n) \\
% &=\frac{w_n (X_n)}{w_n(X_n-1)+w_n(X_n)}.
%\end{align*}

The {\it linearly edge-reinforced random walk} (LERRW) is the ERRW whose reinforcement scheme is defined by
\begin{align}
 f(\ell,x)=f(0,x) + \ell \Delta \quad \mbox{for $\ell,x \in \mathbb{Z}_+$}. \label{eq:LERRWschemeDef}
\end{align}
We call $\Delta \geq 0$ the {\it reinforcement parameter}.

\subsection{Recurrence classification}
We say that the path $\boldsymbol{X}$ is {\it recurrent} if every point is visited infinitely often, and {\it transient} if every point is visited only finitely many times. 
The recurrence problem for the LEERW on $\mathbb{Z}_+$ is solved by Takeshima \cite{Takeshima00} (although only the $\Delta=1$ case is treated in \cite{Takeshima00}, his argument works for any $\Delta > 0$ as well). In Appendix \ref{sec:AppendixRecTraPres}, we give an elementary and short proof of Theorem \ref{thm:Takeshima00linear}.

\begin{theorem}[\cite{Takeshima00}, Theorem 4.1] \label{thm:Takeshima00linear}
Let $\boldsymbol{X}$ be the linearly edge-reinforced random walk on $\mathbb{Z}_+$
with the reinforcement parameter $\Delta \geq 0$. Define
\[ F_0:= \sum_{x=0}^{\infty} \frac{1}{f(0,x)}. \]
\begin{itemize}
\item[(i)] If $F_0=+\infty$, then $\boldsymbol{X}$ is recurrent a.s..
%$\displaystyle \sum_{x=0}^{\infty} \dfrac{1}{w_0(x)} = \infty$
\item[(ii)] If $F_0<+\infty$, then $\boldsymbol{X}$ is transient a.s..
\end{itemize}
\end{theorem}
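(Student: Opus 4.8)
The plan is to reduce the question to a convergence/divergence dichotomy for a random series governed by $F_0$, via the description of the linearly edge-reinforced walk on the tree $\mathbb{Z}_+$ as a mixture of random walks in random environments. \emph{Step 1 (an elementary urn structure, and the dichotomy).} Fix $x\ge1$. Between two consecutive visits of $\boldsymbol X$ to $x$ the walk performs one excursion away from $x$, to the right or to the left, and this excursion traverses exactly twice the edge that was chosen on leaving $x$, leaving the other edge at $x$ untouched. Hence the sequence of directions chosen at the successive visits to $x$ is a (possibly finite) initial segment of a two-colour Pólya-type urn with reinforcement $2\Delta$ per draw and strictly positive initial composition. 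Consequently, if $x$ is visited infinitely often then the asymptotic proportion of rightward moves at $x$ is a.s.\ a $\mathrm{Beta}$-distributed variable $\beta_x\in(0,1)$, so both neighbours of $x$ are visited infinitely often as well; since an easy Borel--Cantelli argument using \eqref{eq:ERRWtransition} shows that $\boldsymbol X$ a.s.\ reaches every vertex, we obtain the dichotomy ``every vertex visited infinitely often ($=$ recurrent)'' versus ``every vertex visited only finitely often, so $X_n\to\infty$ ($=$ transient)'', and in particular recurrence is equivalent to visiting $0$ infinitely often.

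\emph{Step 2 (reduction to a random series).} Combining the urn structures at all vertices with the exchangeability of the crossing sequence on the tree $\mathbb{Z}_+$ (Coppersmith--Diaconis; see \cite{Diaconis88, KeaneRolles00}), and using that for $\Delta>0$ one may rescale all weights by $1/\Delta$ to reduce to the classical edge-reinforced walk (the case $\Delta=0$ being a plain birth--death chain, for which the statement is classical), one obtains: conditionally on an environment $(\beta_x)_{x\ge1}$ of \emph{independent} $\mathrm{Beta}$ variables with parameters comparable to $f(0,x)/\Delta$, $\boldsymbol X$ is the nearest-neighbour Markov chain on $\mathbb{Z}_+$ reflected at $0$ with $P(x\to x+1)=\beta_x$. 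By the classical gambler's-ruin criterion such a chain is recurrent if and only if $\sum_{j\ge0}\prod_{y=1}^{j}\rho_y=+\infty$ a.s., where $\rho_y:=(1-\beta_y)/\beta_y$ are independent. The $\mathrm{Beta}$ structure gives $E[\rho_y]=\dfrac{f(0,y-1)}{f(0,y)}\,(1+\varepsilon_y)$ with $\varepsilon_y>0$ and $\sum_y\varepsilon_y<\infty$ precisely when $F_0<\infty$, so that
\[
E\Bigl[\textstyle\prod_{y=1}^{j}\rho_y\Bigr]=\prod_{y=1}^{j}E[\rho_y]=\frac{f(0,0)}{f(0,j)}\prod_{y=1}^{j}(1+\varepsilon_y),
\]
and the infinite product converges exactly when $F_0<\infty$. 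This is where the hypothesis enters.

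\emph{Step 3 (the two directions).} If $F_0<\infty$, then $f(0,j)\to\infty$ and, discarding finitely many terms if necessary, $\sum_j E[\prod_{y\le j}\rho_y]\asymp\sum_j 1/f(0,j)=F_0<\infty$, whence $\sum_j\prod_{y\le j}\rho_y<\infty$ a.s.\ and $\boldsymbol X$ is transient a.s. If $F_0=\infty$, the first moment of $\sum_j\prod_{y\le j}\rho_y$ is infinite while the products themselves typically tend to $0$, and the second moment also diverges, so one argues along the sample path: writing $\prod_{y=1}^{j}\rho_y=e^{S_j}$ with $S_j=\sum_{y=1}^{j}\log\rho_y$ a sum of independent increments, the centred part $W_j:=S_j-E[S_j]$ has $\mathrm{Var}(W_j)\asymp\sum_{y\le j}1/f(0,y)\to\infty$. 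A blocking argument then shows $\sum_j e^{S_j}=\infty$ a.s.: on each block over which $\mathrm{Var}(W)$ increases only by $O(1)$ the path $W$ moves only by $O(1)$, while the successive block-endpoint values of $W$ form a recurrent process that returns to any bounded neighbourhood of its running maximum infinitely often, and each such block contributes at least a fixed positive multiple of its $e^{E[S_j]}$-mass to $\sum_j e^{S_j}$, these masses summing to $\infty$; hence $\boldsymbol X$ is recurrent a.s.

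The step I expect to be the real obstacle is the recurrence direction $F_0=\infty$: there the ``potential'' $S_j$ may drift to $-\infty$ (so $\prod_{y\le j}\rho_y\to0$), no first- or second-moment estimate controls $\sum_j\prod_{y\le j}\rho_y$, and one is forced into the blocking/path analysis of the independent-increment sum $S_j$. The critical regime $f(0,x)\asymp x$ (i.e.\ $\alpha=1$) is the tightest, the fluctuations of $W_j$ being then only of logarithmic order while they must still overcome the downward pull of $E[S_j]$ infinitely often.
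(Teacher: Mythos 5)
Your route is genuinely different from the paper's: you go through Pemantle's Beta-environment representation and the quenched resistance criterion $\sum_j\prod_{y\le j}\rho_y=\infty$, whereas the paper's Appendix proof never leaves the reinforced walk itself --- it combines Sellke's 0--1 law with Davis's nonnegative martingale $\Theta_n$, rewrites $\Theta_n=\sum_{x<X_n}s_{\phi_n(x)}(x)$, and proves the explicit bound $s_\infty(x)\ge \frac{1}{2f(0,x)+\Delta}$, so that on the escape event $\Theta_n\to\infty$ when $F_0=\infty$ (contradicting martingale convergence), while $F_0<\infty$ gives the uniform bound $\Theta_n\le F_0$ and hence transience. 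Your transience direction is essentially sound: since $F_0<\infty$ forces $f(0,x)\to\infty$, the first-moment/Fubini computation works after discarding finitely many sites; note however that your stated formula for $E[\rho_y]$ is not correct as written --- for $\beta_y\sim\mathrm{Beta}\bigl(\frac{w_0(y)}{2\Delta},\frac{w_0(y-1)+\Delta}{2\Delta}\bigr)$ one has $E[\rho_y]=\frac{w_0(y-1)+\Delta}{w_0(y)-2\Delta}$ only when $w_0(y)>2\Delta$, and $E[\rho_y]=+\infty$ otherwise, so the ``$\varepsilon_y$ summable iff $F_0<\infty$'' bookkeeping only makes sense after this truncation.

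The recurrence direction ($F_0=\infty$), which you yourself flag as the obstacle, is where the proposal has a genuine gap, and for arbitrary initial weights the sketch does not close it. First, $\mathrm{Var}(W_j)\asymp\sum_{y\le j}1/f(0,y)$ is false when weights are small: $\Psi'(z)\sim z^{-2}$ as $z\to0$, so a single increment can have variance of order $(\Delta/w_0(y))^2$, which already destroys the ``blocks over which the variance grows by $O(1)$'' construction (and the increments are not uniformly square-integrable, so the claim that the block-endpoint walk returns within $O(1)$ of its running maximum infinitely often needs a proof, not just a CLT heuristic). Second, and more seriously, the assertion that each good block contributes a fixed positive multiple of its $e^{E[S_j]}$-mass, with ``these masses summing to $\infty$'', conflates the sum over \emph{all} blocks with the sum over the (random, sparse) blocks you actually select; the individual block masses need not be bounded below, so divergence of the selected sum is exactly what has to be proved. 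Third, the sketch does not address the regime where the quenched potential really drifts to $-\infty$: e.g.\ $f(0,x)=x$ with $0<\Delta<1$ gives $S_j\sim(\Delta-1)\log j$, so $\gamma_j\approx j^{\Delta-1}\to0$ and $\sum_j\gamma_j=\infty$ only because of the precise size of $E[S_j]$ (the telescoped digamma identity balancing $-\log w_0(j)$ against $\sum_i\{\Psi(\frac{w_0(i)}{2\Delta}+\frac12)-\Psi(\frac{w_0(i)}{2\Delta})\}$), not because $W_j$ revisits its maximum; making this quantitative for completely general $w_0(\cdot)$ is the hard analytic content, and it is precisely what the paper's two-line bound $s_\infty(x)\ge\frac{1}{2f(0,x)+\Delta}$ circumvents. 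Finally, note that Step 2 rests on Pemantle's mixture theorem, so this route is in no sense more elementary than the appendix argument it would replace.
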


Theorem \ref{thm:Takeshima00linear} shows that the recurrence of the LERRW is completely determined by the initial weights: In particular, if the walk is transient when $\Delta=0$, then it never becomes recurrent even if $\Delta>0$ is very large. 

\subsection{Main results: Almost sure behavior of LERRWs} Consider the LERRW $\boldsymbol{X}$ on $\mathbb{Z}_+$ with the initial weight
\begin{align}
f(0,x) = x^{\alpha} \vee 1 =
\begin{cases}
 1 & \mbox{($x=0$),} \\
 x^{\alpha} & \mbox{($x \in \mathbb{N}=\{1,2,\cdots\}$),} \\
 \end{cases}
 \label{eq:InitWeightPowerIkenami}
\end{align}
and the reinforcement parameter $\Delta>0$.
By Theorem \ref{thm:Takeshima00linear}, $\boldsymbol{X}$ is recurrent a.s. if and only if $\alpha \leq 1$. Ikenami (Master thesis) \cite{Ikenami01} shows that if $0 \leq \alpha < 1$ and the reinforcement parameter $\Delta=1$, then for any $\varepsilon>0$,
\[
 %P \left( 
 \lim_{n \to \infty} \frac{X_n}{(\log n)^{(1+\varepsilon)/(1-\alpha)}} =0
  %\right) = 1.
\quad \mbox{a.s..}
\]
%\end{theorem}
The proof in \cite{Ikenami01} is inspired by the Lyapunov function method in Comets, Menshikov, and Popov \cite{CometsMenshikovPopov98}.
% (for the readers are referred to the recent monograph by Menshikov, Popov, and Wade \cite{MenshikovPopovWade16}).}

Our first result is for the off-critical case, $\alpha<1$, and the precise order of oscillation of $X_n$ is indeed $(\log n)^{1/(1-\alpha)}$.

\begin{theorem} \label{thm:Takei20Main1} Assume that $\alpha <1$ and $\Delta >0$. Let
\begin{align}
K(\alpha,\Delta) := 
\begin{cases}
\dfrac{1-\alpha}{2\Delta} &(\alpha<0), \\[2mm]
\left( \Psi\left(\dfrac{1}{2\Delta}+\dfrac{1}{2}\right)-\Psi\left(\dfrac{1}{2}\right) \right)^{-1}&(\alpha=0), \\[3mm]
\dfrac{1-\alpha}{\Delta} &(0<\alpha<1), \\
\end{cases}
\label{eq:Takei20Main1Const}
\end{align}
where $\Psi(z) = \Gamma'(z) / \Gamma(z)$ is the digamma function.
The LERRW $\boldsymbol{X}$ with the initial weight \eqref{eq:InitWeightPowerIkenami} and the reinforcement parameter $\Delta$ satisfies that
\begin{align*}
%\liminf_{n \to \infty} X_n = 0,
%\quad \mbox{and} \quad
\limsup_{n \to \infty} \dfrac{X_n}{\{ K(\alpha,\Delta)\log n \}^{1/(1-\alpha)}} = 1
\quad \mbox{a.s..}
\end{align*}
%with probability one.
\end{theorem}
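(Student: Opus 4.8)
The plan is to analyze the walk through its excursions away from the origin and to control, for each level $N$, the probability that a single excursion reaches height $N$. Between consecutive visits to $0$, the walk performs an excursion into $\mathbb{Z}_+$; by the standard gambler's-ruin computation for a one-dimensional walk in a (time-evolving) environment, the probability of reaching a high level during one excursion is governed by sums of reciprocals of the current edge weights. First I would freeze the weights at a deterministic approximation: after $k$ visits to $0$ the edge $\{x,x+1\}$ has roughly been traversed $\asymp$ (something growing with $k$ and decaying in $x$) times, so its weight is about $x^\alpha \vee 1$ plus a reinforcement term. The key quantity is $\sum_{x=0}^{N-1} 1/w(x)$, which for the initial weights \eqref{eq:InitWeightPowerIkenami} behaves like $N^{1-\alpha}/(1-\alpha)$ when $0<\alpha<1$, like $\log N$ modulated by digamma corrections when $\alpha=0$, and like $N^{1-\alpha}$ up to the constant $(1-\alpha)/(2\Delta)$ when $\alpha<0$ — this is exactly where the three cases of $K(\alpha,\Delta)$ come from. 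Tracking how reinforcement enters (each traversal adds $\Delta$, and the number of traversals of edge $x$ during the first $m$ excursions that reach past $x$ is itself of order $\log$-type) yields that the probability a given excursion reaches level $N$ is comparable to $\exp\bigl(-N^{1-\alpha}/(K(\alpha,\Delta)) \cdot (1+o(1))\bigr)$, i.e. of order $n^{-1/(K(\alpha,\Delta))\cdot N^{1-\alpha}/\log n}$ when there have been $\asymp n$ steps.

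Next I would convert this into the $\limsup$ statement via Borel–Cantelli. For the upper bound (the $\le 1$ direction), fix $\varepsilon>0$ and set $N = N(n) = \lceil\{(1+\varepsilon)K(\alpha,\Delta)\log n\}^{1/(1-\alpha)}\rceil$; summing the per-excursion reaching probabilities over the $O(\log n)$ excursions completed by time $n$ shows $\sum_n P(X_n \ge N(n))<\infty$ along a geometric subsequence $n_j$, and monotonicity/interpolation between $n_j$ and $n_{j+1}$ (the walk cannot gain too much height in one excursion, and excursion lengths are controlled) upgrades this to all $n$. For the lower bound ($\ge 1$), I would take $N = \lceil\{(1-\varepsilon)K(\alpha,\Delta)\log n\}^{1/(1-\alpha)}\rceil$ and use the second Borel–Cantelli lemma: along a sparse subsequence the events "some excursion in a designated time-block reaches level $N$" are independent enough (conditioning on the configuration at the start of the block, the future excursions from $0$ are driven by fresh coin flips), and their probabilities sum to infinity, so infinitely often $X_n \ge N(n)$. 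The constant in $K(\alpha,\Delta)$ must match on both sides, which forces the sharp asymptotics of $\sum_x 1/w(x)$ rather than mere order-of-magnitude bounds.

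The main obstacle is the feedback between the occupation measure and the weights: the number of times edge $\{x,x+1\}$ has been crossed by the time of the $m$-th return to $0$ is itself random and depends on the past trajectory, so the "frozen environment" is only an approximation. Controlling this requires showing that the edge-local-time profile concentrates — essentially that during the excursions relevant to reaching a new record height, the inner edges ($x$ much smaller than the current record) have been traversed a predictable number of times (of logarithmic order in $n$), so that their reinforced weights are $w(x) \approx (x^\alpha\vee 1) + \Delta\cdot(\text{predictable count})$, and crucially that the $\Delta\cdot(\text{count})$ term does not blow up the sum $\sum 1/w(x)$ beyond the stated constant. In the regime $\alpha<0$ the reinforcement term dominates the initial weight for the bulk of edges, which is why $K$ depends on $\Delta$ there but (to leading order) not in $0<\alpha<1$; making this dichotomy precise, uniformly over the range of $N$ in play, is the technical heart of the argument. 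I expect the proof of Theorem~\ref{thm:Takeshima00linear} in the appendix (via the martingale/harmonic-function associated to the frozen environment) to supply the basic excursion estimates that get bootstrapped here.
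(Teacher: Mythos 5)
Your outline never resolves the one issue on which everything hinges: the feedback between the local times and the weights. You acknowledge it as ``the technical heart'' but offer no mechanism beyond ``freezing'' the weights and hoping the local-time profile concentrates, and the quantitative claims you build on this are not consistent with the statement you are proving. Concretely: (a) you locate the constant in the asymptotics of $\sum_{x<N} 1/w(x)$ for the \emph{initial} weights \eqref{eq:InitWeightPowerIkenami}, but that sum is $\sim N^{1-\alpha}/(1-\alpha)$ and contains no $\Delta$ at all, whereas $K(\alpha,\Delta)$ in \eqref{eq:Takei20Main1Const} depends on $\Delta$ in every case; your remark that $K$ does not depend on $\Delta$ for $0<\alpha<1$ contradicts the theorem ($K=(1-\alpha)/\Delta$ there). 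The factor $2$ at $\alpha<0$ versus $\alpha>0$, and the digamma expression at $\alpha=0$, cannot be recovered from the initial weights alone. (b) The number of excursions from $0$ completed by time $n$ is of order $n$ (the relevant walk is positive recurrent in this regime), not $O(\log n)$; with your count the Borel--Cantelli computation would produce the wrong scale $(\log\log n)^{1/(1-\alpha)}$. (c) For the lower bound you invoke near-independence of excursions of the reinforced walk, but successive excursions are exactly the place where reinforcement correlates the process with its past; ``fresh coin flips'' with biases depending on the accumulated weights is not independence, and no argument is given.

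The paper removes the feedback problem at the outset rather than fighting it: by Pemantle's representation, the LERRW is a mixture of Markov chains in a random environment $\{p_i\}$ of \emph{independent} Beta variables. Then $S_x=\log\gamma_x$ is a sum of independent random variables whose means are digamma differences, $\Psi\bigl(\tfrac{w_0(i-1)+\Delta}{2\Delta}\bigr)-\Psi\bigl(\tfrac{w_0(i)}{2\Delta}\bigr)\sim \Delta/i^{\alpha}$ for $0<\alpha<1$ and $\sim 2\Delta/i^{\alpha}$ for $\alpha<0$ (this is exactly where the three cases of $K(\alpha,\Delta)$, including the factor $2$ and the $\alpha=0$ expression, come from); Kolmogorov's SLLN gives $S_x\sim x^{1-\alpha}/K(\alpha,\Delta)$ a.s.\ (Proposition \ref{prop:Takei20SxAsymp}), and the almost sure upper and lower bounds on the walk then follow from the quenched expected hitting times $T^{\omega}(x)$ via Lemmas \ref{lem:MenshikovWade08Lem9(i)} and \ref{lem:HrynivMenshikovWade13Lem4.2bis} (the lower bound also uses quenched positive recurrence, $Z^{\omega}<\infty$). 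If you want to keep an excursion-based proof you would still need some substitute for this representation (or a genuine concentration statement for the edge local times, uniform over the relevant heights) to pin down the exponential rate with the exact constant $K(\alpha,\Delta)$ and to justify the second Borel--Cantelli step; as written, the proposal asserts the key estimate rather than proving it, and the heuristics supporting it point to the wrong constant.
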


\begin{remark}
It is known that $K(0,1)=1/(\log 4)$, and $K(0,\Delta) \sim 1/(2\Delta)$ as $\Delta \to \infty$.
(See \eqref{eq:digammaDifflog4} and Lemma \ref{lem:Takeshima00(4.10)(4.12)improved} below, respectively.)
\end{remark}

The next theorem is concerning the growth in the critical case, $\alpha=1$.

\begin{theorem} \label{thm:Takei20Main2} Assume that $\alpha =1$ and $\Delta >0$, and consider the LERRW $\boldsymbol{X}$ with the initial weight \eqref{eq:InitWeightPowerIkenami} and the reinforcement parameter $\Delta$. 
%For any $\Delta >0$ and $\varepsilon>0$, there exist positive and finite constants $C_1$ and $C_2$, which depend on $\varepsilon$ and $\Delta$, such that the following hold.
\begin{itemize}
\item[(i)] If $\Delta >2$, then for any $\varepsilon>0$,
\begin{align*}
\limsup_{n \to \infty} \dfrac{X_n}{n^{(1-\varepsilon)/\Delta}} =+\infty, %\label{ineq:Takei20Main2-1} 
%\intertext{and}
\quad \mbox{and} \quad 
\lim_{n \to \infty} \dfrac{X_n}{n^{(1+\varepsilon)/\Delta}} = 0\quad \mbox{a.s..}
%\label{ineq:Takei20Main2-2}
%\label{ineq:Takei20Main2(i)}
\end{align*}
\item[(ii)] If $0< \Delta \leq 2$, then for any $\varepsilon>0$,
\begin{align*}
\limsup_{n \to \infty} \dfrac{X_n}{n^{(1-\varepsilon)/2}} =+\infty, 
%\intertext{and}
\quad \mbox{and} \quad 
\lim_{n \to \infty} \dfrac{X_n}{n^{(1+\varepsilon)/2}} = 0
\quad \mbox{a.s..}
%\label{ineq:Takei20Main2(ii)}
\end{align*}
\end{itemize}
\end{theorem}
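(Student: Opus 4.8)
The plan is to analyze the walk through its embedded structure on the edge-traversal counts, exploiting the fact that an edge-reinforced walk on $\mathbb{Z}_+$ can be described via a sequence of Pólya-type urns, one per edge, governing the ratio of left- to right-traversals. For the critical case $\alpha=1$ the initial weight of edge $\{x,x+1\}$ is essentially $x$, and each reinforcement adds $\Delta$, so the effective reinforcement ratio at edge $x$ behaves like $\Delta/x$ for large $x$; this borderline scaling is what makes $\Delta=2$ the threshold. I would first set up the standard reduction: let $T_n$ be the local time at vertex $0$ (number of returns), and relate the position reached before the $n$-th return to a branching-type recursion for the numbers $U_k(x)$ of upcrossings of edge $\{x,x+1\}$ during the first $T$ excursions. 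The key identity is that, conditionally on having made $m$ upcrossings of edge $\{x-1,x\}$, the number of upcrossings of edge $\{x,x+1\}$ is a sum related to a generalized Pólya urn with parameters depending on $x^\alpha$ and $\Delta$.

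The core estimate I would establish is a two-sided bound on the probability that the walk, started at $x$ with a given configuration of weights, reaches level $y>x$ before returning to $x-1$ (or to $0$). Using the gambler's-ruin formula for the (time-inhomogeneous) walk together with the explicit initial weights $x^\alpha\vee 1$ and the reinforcement rule, this probability is governed by sums of the form $\sum_{z=x}^{y} \prod (\text{weight ratios})$. For $\alpha<1$ the dominant contribution after $N$ excursions makes the reachable level grow like $(K(\alpha,\Delta)\log N)^{1/(1-\alpha)}$, and the three cases of $K$ come from evaluating $\sum 1/f(0,z)$-type sums exactly — the $\alpha=0$ case producing the digamma expression via a telescoping product of $\Gamma$-ratios, as already flagged in the remark. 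For $\alpha=1$ the same analysis gives reach $\asymp N^{c}$ with $c$ depending on whether $\Delta>2$ (where the reinforcement dominates and $c=1/\Delta$, smaller than $1/2$) or $\Delta\le2$ (where the intrinsic $1/x$ weight decay dominates and $c=1/2$, the same exponent as for the unreinforced recurrent walk with weights $x$). Translating excursion-count asymptotics into time asymptotics uses that the time to complete $N$ excursions is, up to the relevant polynomial or exponential corrections, comparable to the maximal level reached times the accumulated local time, so $\log n \asymp \log N$ in the off-critical case and $n \asymp N^{1+o(1)}$-type relations hold in the critical case.

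The limsup/liminf halves are then handled separately by Borel–Cantelli. For the upper bound (limsup $\le 1$, respectively the $\lim = 0$ statements) one sums, over a geometric sequence of excursion-times, the probability that the walk exceeds $(1+\varepsilon)$ times the conjectured level; the reach-probability estimate makes this summable. For the lower bound (limsup $\ge 1$, respectively the limsup $=+\infty$ statements) one needs independence across well-separated excursions: here I would use that once the walk returns to $0$, future excursions are not independent of the past (weights have changed), but the weight changes caused by previous excursions only help or hurt in a controlled monotone way, so one can compare with an i.i.d. sequence of excursions under frozen (or suitably dominated) weights and apply the second Borel–Cantelli lemma to a sequence of ``reach level $y_k$ during excursion $k$'' events.

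The main obstacle I anticipate is exactly this loss of independence between excursions: the reinforcement accumulated on edges near the origin over many excursions perturbs the transition probabilities, and one must show this perturbation does not shift the leading-order exponent. I expect to control it by a coupling/monotonicity argument — the ERRW on $\mathbb{Z}_+$ has useful stochastic-monotonicity properties in the weights — combined with the observation that the total extra weight deposited on edge $x$ after $N$ excursions is of smaller order than $f(0,x)$ for the ranges of $x$ that matter (those near the maximal level), so the frozen-weight approximation is asymptotically exact. A secondary technical point is the precise constant $K(0,\Delta)$: obtaining the digamma formula requires identifying the relevant product of weight ratios with a ratio of Gamma functions and then using $\Psi = \Gamma'/\Gamma$; Lemma~\ref{lem:Takeshima00(4.10)(4.12)improved} referenced in the remark presumably supplies the asymptotics $K(0,\Delta)\sim 1/(2\Delta)$ that cross-checks this against the $\alpha<0$ formula.
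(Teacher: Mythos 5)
Your plan correctly identifies the heuristic source of the threshold $\Delta=2$, but the step you rely on to make the excursion decomposition rigorous contains a genuine error, and it is precisely the step that matters in the critical case. You propose to handle the dependence between successive excursions by comparing with excursions under frozen (initial or dominated) weights, justified by the claim that ``the total extra weight deposited on edge $x$ after $N$ excursions is of smaller order than $f(0,x)$ for the ranges of $x$ that matter.'' For $\alpha=1$ this is false except exactly at the running maximum: with initial weight $f(0,x)=x$, the edges below the maximum are traversed so often that the accumulated reinforcement $\Delta\,\phi_n(x)$ dominates $x$ at \emph{every} scale below the maximal visited level (near the origin by an enormous factor, and even at a constant fraction of the maximum by a constant factor when $\Delta>2$). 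Indeed, if the frozen-initial-weight approximation were asymptotically exact, the walk would behave like the unreinforced conductance-$x$ walk and you would get exponent $1/2$ for all $\Delta$, contradicting part (i) of the very statement you are proving; the drop of the exponent from $1/2$ to $1/\Delta$ for $\Delta>2$ is caused exactly by the accumulated weight on the already-visited edges, so no argument that treats that accumulation as negligible can produce the $\Delta>2$ regime. As written, the lower-bound (Borel--Cantelli) half and the translation from excursion counts to time (``$n\asymp N^{1+o(1)}$-type relations'') are therefore unsubstantiated where they are needed most.

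The paper avoids this problem in a different way, and you already brush against the right idea when you mention the P\'olya-urn structure: by partial exchangeability (Pemantle's representation, Section \ref{sec:prelim}), the LERRW on $\mathbb{Z}_+$ is exactly a random walk in an i.i.d.-type random environment with independent $\mbox{Beta}\left(\frac{w_0(i)}{2\Delta},\frac{w_0(i-1)+\Delta}{2\Delta}\right)$ variables, so there is no excursion-dependence to control at all. One then shows $S_x=\log\gamma_x\sim(\Delta-1)\log x$ $\mathbb{P}$-a.e.\ (Proposition \ref{prop:Takei20SxAsymp}(ii), via digamma asymptotics and Kolmogorov's SLLN), deduces the quenched expected hitting times $T^{\omega}(x)\asymp x^{\Delta}$ for $\Delta>2$ (positive recurrent regime, $\sum 1/\gamma_x<\infty$) and $T^{\omega}(x)\asymp x^{2\pm o(1)}$ for $\Delta\le 2$, and converts these into almost-sure upper and lower bounds on $Y_n$ with Lemmas \ref{lem:MenshikovWade08Lem9(i)} and \ref{lem:HrynivMenshikovWade13Lem4.2bis}. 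If you want to salvage your excursion-based route, you would need to replace the frozen-weight comparison by an exact use of the urn/exchangeability structure (effectively re-deriving the RWRE representation), at which point you are back to the paper's argument.
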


\subsection{Effect of linear reinforcement} For comparison, we give almost sure bounds for unreinforced case. In the case $\alpha<1$ the speed of the walker becomes much slower as soon as $\Delta>0$, while it is not in the critical case $\alpha=1$.

\begin{theorem} \label{thm:Takei20Main3}
Consider the LERRW $\boldsymbol{X}$ with the initial weight \eqref{eq:InitWeightPowerIkenami} and the reinforcement parameter $\Delta=0$ (i.e. unreinforced).
%For any $\varepsilon>0$, there exist positive and finite constants $C_1$ and $C_2$, which depend on $\varepsilon$, such that the following hold.
\begin{itemize}
\item[(i)] If $\alpha < -1$, then for any $\varepsilon>0$,
\begin{align*}
\limsup_{n \to \infty} \dfrac{X_n}{n^{1/(1-\alpha)}}  >0, 
%\intertext{and}
\quad \mbox{and} \quad 
\lim_{n \to \infty} \dfrac{X_n}{\{ n(\log n)^{1+\varepsilon}\}^{1/(1-\alpha)}} = 0 
\quad \mbox{a.s..}
%\label{ineq:Takei20Main3(i)}
\end{align*}
\item[(ii)] If $\alpha = -1$, then for any $\varepsilon>0$,
\begin{align*}
\limsup_{n \to \infty} \dfrac{X_n}{n^{(1-\varepsilon)/2}} =+\infty, 
%\intertext{and}
\quad \mbox{and} \quad 
\lim_{n \to \infty}\dfrac{X_n}{\{ n(\log n)^{1+\varepsilon}\}^{1/2}} = 0
 \quad \mbox{a.s..}
%\label{ineq:Takei20Main3(ii)}
\end{align*}
\item[(iii)] If $-1<\alpha \leq 1$, then for any $\varepsilon>0$,
\begin{align*}
\limsup_{n \to \infty} \dfrac{X_n}{n^{1/2}} >0, 
%\intertext{and}
\quad \mbox{and} \quad 
\lim_{n \to \infty} \dfrac{X_n}{\{ n(\log n)^{1+\varepsilon}\}^{1/2}} = 0 
\quad \mbox{a.s..}
%\label{ineq:Takei20Main3(iii)}
\end{align*}
\end{itemize}
%(In (i) and (iii), $C_1$ does not depend on $\varepsilon$.)
\end{theorem}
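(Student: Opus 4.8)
The plan is to exploit that for $\Delta=0$ the walk $\boldsymbol X$ is the reversible nearest-neighbour Markov chain on $\mathbb Z_+$ with edge conductances $c(x)=f(0,x)$ (so $c(0)=1$, $c(x)=x^\alpha$ for $x\ge1$) and a reflection at $0$; its stationary measure is $\pi(x)=c(x-1)+c(x)$ and its scale function $g(x)=\sum_{j=0}^{x-1}1/c(j)$ is harmonic on $\{1,2,\dots\}$, so $g(X_n)$ is a martingale while $X_n\ge1$ (and a submartingale in general). Two elementary facts do all the work. First, optional stopping of $g(X_{n\wedge T_0\wedge T_N})$ gives $P_1(T_N<T_0)=g(1)/g(N)=1/g(N)$; hence, decomposing along the i.i.d.\ excursions away from $0$, the number of excursions needed to reach level $N$ is geometric with mean $g(N)$, and since each excursion lasts at least two steps,
\[
P_0(T_N\le n)\le\frac{n/2+1}{g(N)}.
\]
Second, the birth-and-death identity $E_0[T_N]=\sum_{y=0}^{N-1}\pi(y)\bigl(g(N)-g(y)\bigr)$, combined with $g(x)\sim x^{1-\alpha}/(1-\alpha)$ for $\alpha<1$, $g(x)\sim\log x$ for $\alpha=1$, and $\sum_{y<N}\pi(y)\asymp N^{1+\alpha}$ (resp.\ $\log N$, resp.\ a constant) for $\alpha>-1$ (resp.\ $=-1$, resp.\ $<-1$), yields
\[
E_0[T_N]\le C_\alpha\,a_N,\qquad a_N:=\begin{cases}N^{1-\alpha}&(\alpha<-1),\\ N^2\log N&(\alpha=-1),\\ N^2&(-1<\alpha\le1).\end{cases}
\]
At $\alpha=1$ the crude bound $E_0[T_N]\le g(N)\sum_{y<N}\pi(y)$ loses a logarithm, so there one keeps the factor $g(N)-g(y)$ and uses $\sum_{y<N}y\log(N/y)\asymp N^2$ (or, more cheaply, compares with the reflected simple random walk: $p_x\ge1/2$ for $\alpha\ge0$ gives $X_n\ge|S_n|$ by a monotone coupling, hence $E_0[T_N]\le E_0[T_N^{\mathrm{SRW}}]=N^2$).

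For the lower bounds on the oscillation in (i)--(iii) I would apply Fatou's lemma to the nonnegative variables $T_N/a_N$: since $E[T_N/a_N]\le C_\alpha$ for all $N$, we get $E[\liminf_N T_N/a_N]\le C_\alpha$, so $L:=\liminf_N T_N/a_N<\infty$ almost surely. Choosing a random sequence $N_i\uparrow\infty$ along which $T_{N_i}/a_{N_i}\to L$ and setting $n_i:=T_{N_i}$ (so $X_{n_i}=N_i$), one has $n_i\le(L+1)a_{N_i}$ for large $i$ and, using $T_N\ge N$ to replace $\log N_i$ by $\log n_i$, the bounds $X_{n_i}\ge c\,n_i^{1/(1-\alpha)}$ in case (i), $X_{n_i}\ge c\,(n_i/\log n_i)^{1/2}$ in case (ii), and $X_{n_i}\ge c\,n_i^{1/2}$ in case (iii), for some $c=c(L)>0$; in case (ii) the factor $(\log n_i)^{-1/2}$ is killed by any $n_i^{\varepsilon/2}$, giving $\limsup X_n/n^{(1-\varepsilon)/2}=+\infty$. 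This proves the first display in each of (i), (ii), (iii).

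For the upper bounds ($\lim=0$) I would produce an almost-sure lower bound for $T_N$ decaying only logarithmically below $a_N'$, with $a_N':=N^{1-\alpha}$ for $\alpha<-1$ and $a_N':=N^2$ for $-1\le\alpha\le1$. When $\alpha<-1$ this comes directly from the excursion inequality, since $g(N)\sim N^{1-\alpha}/(1-\alpha)$: $P_0\bigl(T_N\le c_0 a_N'/(\log N)^{1+\varepsilon}\bigr)\le C/(\log N)^{1+\varepsilon}$. When $-1\le\alpha\le1$ the relevant scale is $N^2$ rather than $g(N)$, and I would instead check that $X_n^2-C_\alpha n$ is a supermartingale: from $E[X_{n+1}^2-X_n^2\mid X_n=x]=2x(p_x-q_x)+1$ and $\bigl|2x(p_x-q_x)\bigr|=\bigl|2x\tfrac{c(x)-c(x-1)}{c(x)+c(x-1)}\bigr|\le 4|\alpha|$ (the bracket being $0$ at $x=1$ and the drift $\le1$ at $x=0$), the one-step drift is at most $C_\alpha=4|\alpha|+1$ (one may take $C_\alpha=1$ when $\alpha\le0$); optional stopping at $T_N\wedge n$ then gives $N^2P_0(T_N\le n)\le C_\alpha n$, i.e.\ again $P_0\bigl(T_N\le c_0 N^2/(\log N)^{1+\varepsilon}\bigr)\le C/(\log N)^{1+\varepsilon}$. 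In every case these probabilities are summable along $N_k=2^k$, so Borel--Cantelli together with the monotonicity of $N\mapsto T_N$ gives $T_N\ge c_1 a_N'/(\log N)^{1+\varepsilon}$ for all large $N$; inverting (and using $X_n\le n$) yields $X_n\le\{Cn(\log n)^{1+\varepsilon}\}^{1/(1-\alpha)}$ when $\alpha<-1$ and $X_n\le\{Cn(\log n)^{1+\varepsilon}\}^{1/2}$ when $-1\le\alpha\le1$, eventually; replacing $\varepsilon$ by $\varepsilon/2$ turns these into convergence to $0$.

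The manipulation of hitting-time identities and the Borel--Cantelli bookkeeping are routine. The parts needing the most care are, first, the sharp mean hitting-time asymptotics --- in particular obtaining the clean exponent $2$ at the critical value $\alpha=1$ (with no spurious logarithm) and the correct logarithmic correction at $\alpha=-1$ --- and, second, the fact that the upper bound genuinely requires two different mechanisms: the excursion / effective-resistance estimate governs $T_N$ when $\alpha<-1$ (where $E_0[T_N]\asymp N^{1-\alpha}\gg N^2$ and the chain is positive recurrent), whereas the quadratic supermartingale $X_n^2-C_\alpha n$ governs it throughout the diffusive range $-1\le\alpha\le1$.
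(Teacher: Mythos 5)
Your argument is correct, and it reaches the theorem by a genuinely more self-contained route than the paper. The skeleton is the same: for $\Delta=0$ the walk is a fixed birth--death chain, and everything is driven by the hitting-time identity $E_0[T_N]=\sum_{y<N}\pi(y)(g(N)-g(y))$, which is exactly the paper's $T^{\omega}(x)$ from Lemma~\ref{lem:Etaux}; your asymptotics $a_N$ agree with the paper's Stolz--Ces\`aro computation (Lemma~\ref{lem:StolzCesaro}), including the $N^2\log N$ correction at $\alpha=-1$ and the clean $N^2$ at $\alpha=1$, where you correctly keep the factor $g(N)-g(y)$ to avoid the spurious logarithm. Where you diverge is in converting hitting-time estimates into almost-sure statements: the paper quotes two black-box lemmas from the Lyapunov-function literature (Lemma~\ref{lem:MenshikovWade08Lem9(i)} for the a.s.\ upper bound, Lemma~\ref{lem:HrynivMenshikovWade13Lem4.2bis} for the limsup lower bound), whereas you prove both conversions by hand: Fatou plus a random subsequence of hitting times gives the $\limsup>0$ (resp.\ $=+\infty$) statements directly from $E_0[T_N]\le C a_N$, and the $\lim=0$ statements follow from tail bounds $P_0(T_N\le n)$ --- excursion counting against $g(N)$ when $\alpha<-1$, and the supermartingale $X_n^2-C_\alpha n$ with optional stopping in the diffusive range $-1\le\alpha\le1$ --- combined with Borel--Cantelli along dyadic levels, monotonicity of $T_N$, and the inversion using $X_n\le n$; your observation that two different mechanisms are genuinely needed (resistance scale $N^{1-\alpha}\gg N^2$ in the positive-recurrent case, quadratic scale otherwise) is the right structural point. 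What each approach buys: the paper's route is shorter given the cited machinery and, importantly, that machinery is the one reused for the reinforced cases (Theorems~\ref{thm:Takei20Main1} and~\ref{thm:Takei20Main2}) where the environment is random, so Theorem~\ref{thm:Takei20Main3} comes almost for free; your route avoids any appeal to the external lemmas, yields explicit elementary estimates, and your Fatou argument is in effect an elementary proof of the weak form of Lemma~\ref{lem:HrynivMenshikovWade13Lem4.2bis} that suffices here because the theorem only asserts $\limsup>0$ or $=+\infty$ rather than a sharp constant. The only points to polish are cosmetic: state the drift bound $|2x(p_x-q_x)|\le 4|\alpha|$ for $x\ge 2$ with the sites $x=0,1$ treated separately (as you indicate), and note that $T_N<\infty$ a.s.\ (immediate from $E_0[T_N]<\infty$) before extracting the subsequence.
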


%%%%
% Removed: 200517
\iffalse
Our results are summarized in Table \ref{table:Takei20LERRW}. 
\begin{table}[th]
\begin{tabular}{|c||c|c|} \hline
  & $\Delta=0$ & $\Delta>0$ \\ \hline 
$\alpha<-1$ & around $n^{1/(1-\alpha)}$ & $(\log n)^{1/(1-\alpha)}$ \\ \cline{1-2}
$ -1 \leq \alpha <1$ &  & \\ \cline{1-1} \cline{3-3} 
$\alpha=1$ & around $n^{1/2}$ & around $n^{1/\Delta}$ ($\Delta > 2$)\\ 
& & around $n^{1/2}$ ($0 < \Delta \leq 2$) \\ \hline
\end{tabular}
\caption {The speed of the LERRW $\boldsymbol{X}=\{X_n\}$ with $f(0,x)=x^{\alpha} \vee 1$ and $\Delta \geq 0$.}
\label{table:Takei20LERRW}
\end{table}
\fi
%%%%

\subsection{Related works} 
We briefly review related literatures concerning limit theorems for ERRWs in one dimension. In Davis \cite{Davis90}, the strong law of large numbers
\[ \lim_{n \to \infty} \dfrac{X_n}{n}=0\quad \mbox{a.s.} \]
is proved for initially fair, sequence-type RRWs (that is, $\mathbf{f}_x$ does not depend on $x$). See also Takeshima \cite{Takeshima00} for a possible generalization.
%an extension to a class of matrix-type RRWs.
For limit theorems for sublinear ERRWs, see Davis \cite{Davis96} and T\'{o}th \cite{Toth96AOP,Toth97SSMHungary}, among others.
The continuous time vertex-reinforced jump process (VRJP) was introduced by Davis and Volkov \cite{DavisVolkov02}. The LERRW and the VRJP are known to be closely related, see Sabot and Tarr\`{e}s \cite{SabotTarres15} and references therein. The analog of Theorem \ref{thm:Takeshima00linear} for VRJP on $\mathbb{Z}_+$ is proved in Davis and Dean \cite{DavisDean10}. For the VRJP $\{X_t\}$ on $\mathbb{Z}_+$ corresponding to the LERRW with $f(x,\ell) = 1+\ell$, Davis and Volkov \cite{DavisVolkov02} shows that
\begin{align*}
\lim_{t \to \infty}  \dfrac{1}{\log t} \left(\max_{0 \leq s \leq t}X_s \right) = 2.768\cdots \quad \mbox{a.s..}
\end{align*}
In Lupu, Sabot, and Tarr\'{e}s \cite{LupuSabotTarres19}, the continuous space limit of the VRJP in one dimension is constructed, and it is also obtained as a fine mesh limit of the LERRW.

%%%%
% Removed
\iffalse
\subsection{Organization of the paper}
The rest of the paper is organized as follows. In section \ref{sec:prelim}, we summarize preliminary results for a random walk in random environment (RWRE) closely related to the LERRW. Ikenami's proof  \cite{Ikenami01} %of almost sure upper bound
is inspired by the Lyapunov function method due to Comets, Menshikov, and Popov \cite{CometsMenshikovPopov98}, which is further developed by Menshikov and Wade \cite{MenshikovWade08SPA} and Hryniv, Menshikov, and Wade \cite{HrynivMenshikovWade13}, among others. The readers interested in this powerful method are referred to the monograph by Menshikov, Popov, and Wade \cite{MenshikovPopovWade16}. In section \ref{sec:asbound}, we list important tools  for obtaining almost sure bounds, and prove Theorem \ref{thm:Takei20Main3}. The proofs of our main results, Theorems \ref{thm:Takei20Main1} and \ref{thm:Takei20Main2}, are given in sections \ref{sec:proofmain} and \ref{sec:ProofPropTakei20SxAsymp}. In the appendix \ref{sec:AppendixRecTraPres}, we give an another proof of recurrence classification for the LERRW.
\fi
%%%%%

\section{Preliminaries} \label{sec:prelim} 

\subsection{Reduction of LERRW to RWRE}
Following Pemantle \cite{Pemantle88}, we introduce a random walk in random environment (RWRE), which is equivalent to the LERRW on $\mathbb{Z}_+$ with $\Delta>0$. 

Let $p_0=1$. Assume that $\{ p_i(\omega) \}_{i \in \mathbb{N} }$ is a sequence of independent random variables, and the distribution of $p_i$ is $\mbox{\rm Beta}\left(\dfrac{w_0(i)}{2\Delta},\dfrac{w_0(i-1)+\Delta}{2\Delta} \right)$, 
that is, for $0\leq \alpha < \beta \leq 1$,
\begin{align*}
 &\mathbb{P}(\alpha \leq p_i \leq \beta) \\
 &= B \left(\frac{w_0(i)}{2\Delta},\frac{w_0(i-1)+\Delta}{2\Delta} \right)^{-1} \int_{\alpha}^{\beta} u^{\frac{w_0(i)}{2\Delta}-1}(1-u)^{\frac{w_0(i-1)+\Delta}{2\Delta}-1} \,du,
\end{align*}
where
\begin{align*}
B \left(\frac{w_0(i)}{2\Delta},\frac{w_0(i-1)+\Delta}{2\Delta} \right) = \int_0^1 t^{\frac{w_0(i)}{2\Delta}-1}(1-t)^{\frac{w_0(i-1)+\Delta}{2\Delta}-1} \,dt.
\end{align*}
The expectation and variance under $\mathbb{P}$ are denoted by $\mathbb{E}[\,\cdot \,]$ and $\mathbb{V}[\,\cdot \,]$, respectively.

Given a random environment $\{p_i(\omega)\}_{i \in \mathbb{Z}_+}$, a Markov chain $\boldsymbol{Y}=\{Y_n\}$ on $\mathbb{Z}_+$ is defined by $\mathbf{P}^{\omega}_{i_0} (Y_0=i_0)=1$ and
\begin{align*}
 \begin{cases}
 \mathbf{P}^{\omega}_{i_0} (Y_{n+1}=i+1\,|\,Y_n=i)=p_i(\omega), \\
 \mathbf{P}^{\omega}_{i_0} (Y_{n+1}=i-1\,|\,Y_n=i)=q_i(\omega):=1-p_i(\omega) \\
 \end{cases} 
\end{align*}
for $n \geq 0$ and $i \in \mathbb{Z}_+$. The next result is found in \cite{Pemantle88}, Section 3. (See also Eckhoff and Rolles \cite{EckhoffRolles09} for the uniqueness of representation.)

\begin{lemma}
For any $n \geq 0$ and any $i_0,i_1,\cdots,i_n \in \mathbb{Z}_+$, we have
\begin{align*}
 P(X_1=i_1,\ldots,X_n=i_n\mid X_0=i_0) 
 = \mathbb{E}\left[ \mathbf{P}^{\omega}_{i_0}(Y_1=i_1,\ldots,Y_n=i_n) \right].
\end{align*}
\end{lemma}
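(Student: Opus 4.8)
The plan is to exploit the classical P\'olya-urn representation of the linearly edge-reinforced random walk on a tree, specialized here to $\mathbb{Z}_+$. Since $P(X_0=0)=1$ we take $i_0=0$, and we may assume the path $(0,i_1,\dots,i_n)$ is a legal nearest-neighbour path in $\mathbb{Z}_+$, for otherwise both sides vanish. Expanding the left-hand side as the product $\prod_{k=0}^{n-1}P(X_{k+1}=i_{k+1}\mid X_0=i_0,\dots,X_k=i_k)$ of the one-step probabilities \eqref{eq:ERRWtransition}, I would regroup the $n$ factors according to the vertex $i_k$ at which each step is taken. For $v\in\mathbb{N}$ write $R^{(v)}$ (resp. $L^{(v)}$) for the number of $k\in\{0,\dots,n-1\}$ with $i_k=v$ and $i_{k+1}=v+1$ (resp. $i_{k+1}=v-1$), and set $M_v=R^{(v)}+L^{(v)}$.

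The heart of the argument is a bookkeeping claim: if the transition at time $k$ is the $m$-th step ever taken from a vertex $v\ge 1$, and among the first $m-1$ such steps exactly $r$ were to the right and $\ell$ to the left, then $\phi_k(v)=2r$ and $\phi_k(v-1)=2\ell+1$, whence by \eqref{eq:LERRWschemeDef}, $w_k(v)=w_0(v)+2\Delta r$ and $w_k(v-1)=w_0(v-1)+\Delta+2\Delta\ell$. This rests on a parity observation: each traversal of a fixed edge $\{u,u+1\}$ flips the side of the walker relative to that edge, and since the walk starts at $0$, the traversals of $\{u,u+1\}$ alternate, beginning with a $u\to u+1$ traversal. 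Being located at $v$ at time $k$ then forces $\{v,v+1\}$ to have been traversed an even number of times with exactly $r$ of them to the right (the rightward traversals of $\{v,v+1\}$ up to time $k$ being precisely the rightward steps from $v$), and $\{v-1,v\}$ an odd number of times, with one more left-to-right than right-to-left traversal (the leftward steps from $v$ accounting for the $\ell$ right-to-left ones). Dividing numerator and denominator of \eqref{eq:ERRWtransition} by $2\Delta$ and putting $a_v=\dfrac{w_0(v)}{2\Delta}$, $b_v=\dfrac{w_0(v-1)+\Delta}{2\Delta}$, the $m$-th step from $v$ goes right with probability $\dfrac{a_v+r}{a_v+b_v+m-1}$ and left with probability $\dfrac{b_v+\ell}{a_v+b_v+m-1}$: exactly P\'olya-urn dynamics, while steps from $v=0$ are deterministically to the right. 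Multiplying the $M_v$ factors attached to $v$ and telescoping, the left-hand side becomes
\[
\prod_{v\ge 1}\frac{\Big(\prod_{j=0}^{R^{(v)}-1}(a_v+j)\Big)\Big(\prod_{j=0}^{L^{(v)}-1}(b_v+j)\Big)}{\prod_{j=0}^{M_v-1}(a_v+b_v+j)}=\prod_{v\ge 1}\frac{B\big(a_v+R^{(v)},\,b_v+L^{(v)}\big)}{B(a_v,\,b_v)},
\]
the $v=0$ contribution being $1$.

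To reach the right-hand side I would note that $B\big(a_v+R^{(v)},b_v+L^{(v)}\big)/B(a_v,b_v)=\mathbb{E}\big[p_v^{R^{(v)}}(1-p_v)^{L^{(v)}}\big]$, because $p_v$ is $\mbox{\rm Beta}\big(a_v,b_v\big)=\mbox{\rm Beta}\big(\tfrac{w_0(v)}{2\Delta},\tfrac{w_0(v-1)+\Delta}{2\Delta}\big)$-distributed, which is exactly the prescribed environment law. By independence of $\{p_v\}_{v\ge 1}$ under $\mathbb{P}$ (and $p_0=1$), the product over $v$ equals $\mathbb{E}\big[\prod_{v\ge 0}p_v^{R^{(v)}}(1-p_v)^{L^{(v)}}\big]$. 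Finally, regrouping by vertex the product of one-step probabilities of $\boldsymbol{Y}$ shows that $\mathbf{P}^{\omega}_{0}(Y_1=i_1,\dots,Y_n=i_n)=\prod_{v\ge 0}p_v(\omega)^{R^{(v)}}(1-p_v(\omega))^{L^{(v)}}$, so taking $\mathbb{E}$ yields the claimed identity.

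The step I expect to require the most care is the bookkeeping claim: getting the parity count right, distinguishing which traversals of $\{v-1,v\}$ and $\{v,v+1\}$ arise from steps taken at $v$ rather than at $v\pm 1$, and seeing how this produces the asymmetric shift $+\Delta$ in the second Beta parameter; one must also handle the reflecting boundary at $0$ and the endpoint $i_n$, whose last visit contributes no further step. The remaining steps — the telescoping of the per-vertex products, their identification with ratios of Beta functions, and the appeal to independence — are routine.
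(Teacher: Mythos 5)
Your argument is correct and is essentially the proof the paper itself relies on: the paper gives no proof of this lemma, deferring to Pemantle (Section 3), and that representation is exactly the P\'olya-urn/exchangeability computation you carry out, including the parity bookkeeping $\phi_k(v)=2r$, $\phi_k(v-1)=2\ell+1$ that produces the $+\Delta$ shift in the second Beta parameter and the identification of the per-vertex products with ratios of Beta functions. Your restriction to $i_0=0$ is consistent with the paper's standing assumption $P(X_0=0)=1$, which is the only case in which the conditional probability in the statement is actually well defined.
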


\subsection{RW in a fixed environment}

%Let $p_0=1$, and $0 < p_i < 1$ for $i \in \mathbb{N}$.
%We set $q_i:=1-p_i$ for each $i \in \mathbb{Z}_+ $. 
%A random walk (a.k.a. a {\it birth-death chain}) $\{Y_n\}$ on $\mathbb{Z}_+$ in an environment $\{p_i\}$ is a Markov chain defined by $P(Y_0=0)=1$ and
In this subsection, we fix an environment $\{p_i\}$.
%, and write $p_i=p_i(\omega)$ and $q_i=q_i(\omega)$. 

Define $\{\gamma_x \}_{x \in \mathbb{Z}_+}$ by
\[ \gamma_0:=1,\quad \mbox{and} \quad \gamma_x := \prod_{i=1}^{x} \dfrac{q_i}{p_i}  \quad \mbox{for $x\in \mathbb{N}$.} \]
In the electric network interpretation (see e.g. Chapter 2 in Lyons and Peres \cite{LyonsPeres16}), $\gamma_x$ is the resistance of the edge $\{x,x+1\}$.
Let
\begin{align*}
 h(x) := \sum_{i=0}^{x-1} \gamma_i  %\label{eq:BDchainHarmonicFnDef}
\end{align*}
be a harmonic function with $h(0)=0$ and $h(1)=1$. The effective resistance from the origin to infinity is $\displaystyle h(\infty) = \sum_{i=0}^\infty \gamma_i$.

Using the conductance $w_x:=1/\gamma_x$ of the edge $\{x,x+1\}$, we have
\begin{align}
 p_x = \dfrac{w_x}{w_{x-1}+w_x},
 \quad \mbox{and} \quad
 q_x = \dfrac{w_{x-1}}{w_{x-1}+w_x}\quad \mbox{for $x \in \mathbb{Z}_+$},
 \label{eq:1dRWREpxqxweight}
\end{align}
where $w_{-1}:=0$. Define $\{\pi_x \}_{x \in \mathbb{Z}_+}$ by
\[ %\pi_0:=1,\quad \mbox{and} \quad
 \pi_x:=w_{x-1}+w_x %\dfrac{1}{\gamma_{x-1}} + \dfrac{1}{\gamma_{x}}
 \quad \mbox{for $x\in \mathbb{Z}_+$.} \]
%Since
%\[ p_x = \dfrac{w_x}{w_{x-1}+w_x} \quad \mbox{and} \quad q_{x+1}=  \dfrac{w_x}{w_x+w_{x+1}}, \]
From \eqref{eq:1dRWREpxqxweight}, we can see that $\{\pi_x \}$ is a reversible measure. 
Notice that
\begin{align} Z:=\sum_{i=0}^{\infty} \pi_i<+\infty 
\quad \mbox{if and only if} \quad 
\sum_{i=0}^{\infty} \dfrac{1}{\gamma_i}<+\infty.
\label{equiv:PosRecCriterion}
\end{align}

The following recurrence classification is classical (see e.g. Theorem 2.2.5 in \cite{MenshikovPopovWade16}).

\begin{lemma} Consider the random walk $\boldsymbol{Y}=\{Y_n\}$ in a fixed environment $\{p_i\}$.
\begin{itemize}
\item[(i)] If $\displaystyle \sum_{x=0}^{\infty} \gamma_x <+\infty$, then $\boldsymbol{Y}$ is transient.
\item[(ii)] If $\displaystyle \sum_{x=0}^{\infty} \gamma_x = \sum_{x=0}^{\infty} \dfrac{1}{\gamma_x} =+\infty$, then $\boldsymbol{Y}$ is null recurrent.
\item[(iii)] if $\displaystyle \sum_{x=0}^{\infty} \gamma_x =+\infty$ and $\displaystyle \sum_{x=0}^{\infty} \dfrac{1}{\gamma_x} <+\infty$, then $\boldsymbol{Y}$ is positive recurrent. The unique stationary distribution is given by
\[ \pi(x):= \dfrac{1}{Z} \pi_x \quad \mbox{for $x \in \mathbb{Z}_+$.}\]
\end{itemize}
\end{lemma}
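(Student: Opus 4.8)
The plan is to treat $\boldsymbol{Y}$ as an irreducible birth-and-death chain on $\mathbb{Z}_+$ with a reflecting boundary at the origin (recall $p_0=1$), and to run the standard gambler's-ruin analysis built on the harmonic function $h$. Since the chain is irreducible, every state has the same recurrence type, so it suffices to decide whether $0$ is visited infinitely often; from $0$ the walk moves deterministically to $1$, so the question reduces to whether $\mathbf{P}_1(\tau_0<\infty)=1$, where (in the fixed environment) $\tau_0$ and $\tau_N$ denote the hitting times of $0$ and of $N$.

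First I would compute $a_x:=\mathbf{P}_x(\tau_0<\tau_N)$ for $1\le x\le N-1$. The one-step decomposition $a_x=p_xa_{x+1}+q_xa_{x-1}$ with $a_0=1$, $a_N=0$, together with the recursion $a_x-a_{x+1}=(q_x/p_x)(a_{x-1}-a_x)=(\gamma_x/\gamma_{x-1})(a_{x-1}-a_x)$ and $\gamma_0=1$, shows $a_x-a_{x+1}=\gamma_x(a_0-a_1)$, hence $1=a_0-a_N=(a_0-a_1)\sum_{i=0}^{N-1}\gamma_i=(a_0-a_1)\,h(N)$ and therefore $a_1=1-1/h(N)$. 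Letting $N\to\infty$ gives $\mathbf{P}_1(\tau_0<\infty)=1-1/h(\infty)$ with $h(\infty)=\sum_{x\ge0}\gamma_x$. Thus if $\sum_x\gamma_x=+\infty$ then $\mathbf{P}_1(\tau_0<\infty)=1$, so $0$ — and hence every state — is recurrent; if $\sum_x\gamma_x<+\infty$ then $\mathbf{P}_1(\tau_0<\infty)<1$, so with positive probability the walk never returns to $0$, and by the recurrence/transience dichotomy for irreducible chains $\boldsymbol{Y}$ is transient. This proves (i) and establishes recurrence in (ii) and (iii).

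Next, in the recurrent cases I would identify $\{\pi_x\}$ as a stationary (indeed reversible) measure: detailed balance $\pi_xp_x=w_x=\pi_{x+1}q_{x+1}$ follows from \eqref{eq:1dRWREpxqxweight} and $\pi_x=w_{x-1}+w_x$, as already noted in the excerpt. For a recurrent irreducible chain the stationary measure is unique up to a positive scalar, and positive recurrence holds exactly when its total mass $Z=\sum_x\pi_x$ is finite; by \eqref{equiv:PosRecCriterion} this is equivalent to $\sum_x 1/\gamma_x<+\infty$. Hence under the hypothesis of (ii), $Z=+\infty$ and $\boldsymbol{Y}$ is null recurrent, while under the hypothesis of (iii), $Z<+\infty$, $\boldsymbol{Y}$ is positive recurrent, and normalizing yields the unique stationary distribution $\pi(x)=\pi_x/Z$.

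The argument is entirely classical, so there is no real obstacle; the only points needing care are the bookkeeping in the gambler's-ruin recursion — the ratios $q_x/p_x=\gamma_x/\gamma_{x-1}$ must telescope correctly against $h$ — and invoking the right general facts (the $0$--$1$ law for recurrence of an irreducible Markov chain, and uniqueness of the stationary measure in the recurrent regime) rather than reproving them. Alternatively, one may bypass the computation altogether and simply cite Theorem 2.2.5 of \cite{MenshikovPopovWade16}, as the statement already indicates.
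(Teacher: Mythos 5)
Your proposal is correct, and it is exactly the classical gambler's-ruin plus reversible-measure argument that lies behind the reference the paper invokes: the paper itself gives no proof of this lemma, simply citing Theorem 2.2.5 of \cite{MenshikovPopovWade16}. The telescoping computation $a_x-a_{x+1}=\gamma_x(a_0-a_1)$, the limit $\mathbf{P}_1(\tau_0<\infty)=1-1/h(\infty)$, and the identification of $\{\pi_x\}$ as the (up to scaling, unique) invariant measure with total mass finite iff $\sum_x 1/\gamma_x<\infty$ are all carried out correctly, so no gap needs to be addressed.
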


\section{Almost sure bound} \label{sec:asbound}

\subsection{Almost sure bound by the Lyapunov function method}
 
We consider the RWRE $\boldsymbol{Y}$, defined in the previous section. The first hitting time to $x \in \mathbb{Z}_+$ is defined by
\[ \tau_x := \inf \{ n \geq 0 : Y_n= x \}. \]
%Under $\mathbf{P}^{\omega}_0$, we have $\tau_0=0$ and
%$\tau_x \geq x$ for any $x \in \mathbb{N}$.
%By the strong Markov property, $\{\tau_{x+1} - \tau_x\}_{x \in \mathbb{Z}_+}$ is an independent family.
The next lemma is a consequence of the hitting time identity (see Proposition 2.20 in \cite{LyonsPeres16}).
\begin{lemma} \label{lem:Etaux} Define
\begin{align}
T^{\omega}(x):= \sum_{j=0}^{x-1} \pi_j \{ h(x)-h(j) \} 
=  \sum_{j=0}^{x-1} \pi_j \sum_{i=j}^{x-1} \gamma_i
=\sum_{i=0}^{x-1} \gamma_i \sum_{j=0}^i \pi_j 
\label{eq:BDchainT(x)Def}
\end{align}
for $x \in \mathbb{Z}_+$. Then the expectation of $\tau_x$ under $\mathbf{P}^{\omega}_0$ is given by $\mathbf{E}^{\omega}_0[\tau_x]=T^{\omega}(x)$.
\end{lemma}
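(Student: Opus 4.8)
The plan is to reduce $\mathbf{E}^{\omega}_0[\tau_x]$ to a sum of expected times to cross a single edge. Since $\boldsymbol{Y}$ is a nearest-neighbor walk on $\mathbb{Z}_+$ started at $0$, in order to reach $x$ it must visit $1$, then $2$, \dots, then $x-1$, so under $\mathbf{P}^{\omega}_0$ we have $0=\tau_0\le \tau_1\le \cdots \le \tau_x$, and $\tau_x=\sum_{k=0}^{x-1}(\tau_{k+1}-\tau_k)$. Applying the strong Markov property successively at $\tau_1,\dots,\tau_{x-1}$ (the environment being frozen, the inner conditional expectations are deterministic),
\begin{equation*}
\mathbf{E}^{\omega}_0[\tau_x]=\sum_{k=0}^{x-1}\mathbf{E}^{\omega}_k[\tau_{k+1}].
\end{equation*}
Here each $\tau_{k+1}$ is a.s.\ finite, since on $\mathbb{Z}_+$ the walk is either recurrent or transient to $+\infty$ and in both cases hits every site; the finiteness of the expectations will be clear from the formula below.

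The core step is the single-edge crossing time
\begin{equation*}
\mathbf{E}^{\omega}_k[\tau_{k+1}]=\gamma_k\sum_{j=0}^k\pi_j\qquad(k\ge 0).
\end{equation*}
I would obtain this by first-step analysis: setting $a_k:=\mathbf{E}^{\omega}_k[\tau_{k+1}]$, one step from $k$ lands at $k+1$ with probability $p_k$ and at $k-1$ with probability $q_k$, and from $k-1$ one must first return to $k$ (expected time $a_{k-1}$) and then cross to $k+1$ (expected time $a_k$); hence $a_k=1+q_k(a_{k-1}+a_k)$, i.e.
\begin{equation*}
a_k=\frac{1}{p_k}+\frac{q_k}{p_k}\,a_{k-1}=\frac{1}{p_k}+\frac{\gamma_k}{\gamma_{k-1}}\,a_{k-1},
\end{equation*}
with $a_0=1$ because $p_0=1$. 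Using \eqref{eq:1dRWREpxqxweight} in the form $p_k^{-1}=(w_{k-1}+w_k)/w_k=\gamma_k\pi_k$, together with $a_0=\gamma_0\pi_0=w_0=1$, a one-line induction on $k$ closes the claim. (Equivalently, this is precisely the hitting-time identity, Proposition~2.20 in \cite{LyonsPeres16}, applied to the finite network on $\{0,\dots,k+1\}$, since the walk stays in $\{0,\dots,k\}$ before $\tau_{k+1}$.)

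Finally, substituting into the telescoping sum and interchanging the order of summation,
\begin{equation*}
\mathbf{E}^{\omega}_0[\tau_x]=\sum_{i=0}^{x-1}\gamma_i\sum_{j=0}^i\pi_j=\sum_{j=0}^{x-1}\pi_j\sum_{i=j}^{x-1}\gamma_i=\sum_{j=0}^{x-1}\pi_j\{h(x)-h(j)\},
\end{equation*}
where the last equality uses $h(x)-h(j)=\sum_{i=j}^{x-1}\gamma_i$. This is exactly $T^{\omega}(x)$ as defined in \eqref{eq:BDchainT(x)Def}, which proves the lemma.

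For a classical identity of this type there is no genuine obstacle; the only point requiring care is consistency at the reflecting boundary, namely that the conventions $w_{-1}=0$, $p_0=1$, and $\pi_0=w_0$ make both the base case $a_0=1$ and the identity $p_k^{-1}=\gamma_k\pi_k$ valid also for $k=0$. No integrability issue arises, since for each fixed $x$ the quantity $T^{\omega}(x)$ is a finite sum of finite terms.
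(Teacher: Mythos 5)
Your argument is correct: the decomposition $\tau_x=\sum_{k=0}^{x-1}(\tau_{k+1}-\tau_k)$, the first-step recursion $a_k=1/p_k+(q_k/p_k)a_{k-1}$, the identity $p_k^{-1}=\gamma_k\pi_k$ (valid also at $k=0$ thanks to $w_{-1}=0$), and the induction giving $a_k=\gamma_k\sum_{j=0}^k\pi_j$ all check out, and the final interchange of summation reproduces exactly the three expressions in \eqref{eq:BDchainT(x)Def}. The paper itself offers no proof: it simply records the lemma as "a consequence of the hitting time identity (Proposition 2.20 in \cite{LyonsPeres16})", i.e.\ it delegates the whole computation to the standard electrical-network hitting-time formula for birth-and-death chains. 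So your route is genuinely different in presentation: a self-contained, elementary derivation by single-edge crossing times rather than an appeal to the network identity, and your parenthetical remark correctly identifies that the two are the same statement once one restricts to the finite network on $\{0,\dots,k+1\}$. What your version buys is transparency (one sees directly where $\gamma_i$ and $\pi_j$ enter and why the boundary conventions $p_0=1$, $\pi_0=w_0$ matter); what the citation buys is brevity. One small point of care: your justification of finiteness of $\mathbf{E}^{\omega}_k[\tau_{k+1}]$ ("will be clear from the formula below") is mildly circular as phrased, since the recursion manipulates these expectations as finite quantities; the clean fix, which your parenthetical already contains in essence, is that before $\tau_{k+1}$ the walk is confined to the finite set $\{0,\dots,k\}$ where all $p_i>0$ $\mathbb{P}$-a.s., so $\tau_{k+1}$ has a geometrically decaying tail and all the $a_k$ are finite a priori.
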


To obtain the almost sure upper bound, we use the following lemma (see Lemma 6.1.4 and Theorem 2.8.1 in \cite{MenshikovPopovWade16}).
%Menshikov, Popov, and Wade 

\begin{lemma}%[Menshikov and Wade \cite{MenshikovWade08SPA}, Lemma 9 (i)] 
\label{lem:MenshikovWade08Lem9(i)} 
Let $t_1$ be an increasing, nonnegative function on $\mathbb{Z}_+$ with $t_1(x) \to \infty$ as $x \to \infty$.
If
\[ \mbox{$\mathbb{P}$-a.e. $\omega$,} \quad T^{\omega}(x) \geq t_1(x) \quad \mbox{for all but finitely many $x \in \mathbb{Z}_+$},  \]
then for any $\varepsilon > 0$, $\mathbb{P}$-a.e. $\omega$ and $\mathbf{P}^{\omega}_0$-a.s.,
%\[ \mbox{$\mathbb{P}$-a.e. $\omega$ and $\mathbf{P}^{\omega}_0$-a.s.,}\quad Y_n \leq T_1^{-1} \bigl( (2n)^{1+\varepsilon} \bigr)\quad \mbox{for all but finitely many $n$}. \]
%Actually, a better bound is available:
\[  Y_n \leq t_1^{-1} \bigl( 2n\{\log (2n)\}^{1+\varepsilon} \bigr)\quad \mbox{for all but finitely many $n$}. \]
\end{lemma}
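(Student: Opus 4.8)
The plan is to deduce the almost sure upper bound on $Y_n$ from a matching lower bound on the occupation of the random walk near the origin, via a Borel--Cantelli argument applied at a deterministic sequence of times. First I would fix a typical environment $\omega$ for which $T^{\omega}(x) \geq t_1(x)$ for all sufficiently large $x$; by Lemma \ref{lem:Etaux}, $T^{\omega}(x) = \mathbf{E}^{\omega}_0[\tau_x]$, so this says the mean hitting time of $x$ grows at least like $t_1(x)$. The key probabilistic input is a one-sided concentration (a Chebyshev- or Paley--Zygmund-type bound, or the explicit variance formula for hitting times of birth-and-death chains) showing that, under $\mathbf{P}^{\omega}_0$, the actual hitting time $\tau_x$ is not much smaller than its mean $T^{\omega}(x)$ with high probability. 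Concretely, one wants something like $\mathbf{P}^{\omega}_0(\tau_x \leq \tfrac{1}{2} T^{\omega}(x)) \leq C/\log(\text{something})$, summable along the chosen subsequence.

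Next I would set up the deterministic times $n_k$ (for instance $n_k = 2^k$, or a slightly sparser geometric sequence) and let $x_k := t_1^{-1}\bigl(2 n_k \{\log(2 n_k)\}^{1+\varepsilon}\bigr)$. The event to control is $\{Y_{n_k} > x_k\} \subseteq \{\tau_{x_k} \leq n_k\}$, since to be beyond $x_k$ at time $n_k$ the walk must have hit $x_k$ by then. Because $T^{\omega}(x_k) = t_1(x_k) \cdot (\text{stuff}) \geq$ roughly $2 n_k \{\log(2n_k)\}^{1+\varepsilon}$, which is a $\{\log(2n_k)\}^{1+\varepsilon}$-factor larger than $n_k$, the deviation $\{\tau_{x_k} \leq n_k\}$ is a deviation of $\tau_{x_k}$ below a $\{\log n_k\}^{-(1+\varepsilon)}$-fraction of its mean. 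Feeding this into the concentration estimate from the previous paragraph yields $\mathbf{P}^{\omega}_0(\tau_{x_k} \leq n_k) \leq C \{\log n_k\}^{-(1+\varepsilon)} \asymp C k^{-(1+\varepsilon)}$, which is summable in $k$. By Borel--Cantelli, $\mathbf{P}^{\omega}_0$-a.s. we have $Y_{n_k} \leq x_k$ for all large $k$, and then a monotonicity/interpolation step over $n \in [n_{k}, n_{k+1})$ — using that $t_1^{-1}(2n\{\log 2n\}^{1+\varepsilon})$ is increasing and that consecutive $n_k$ are within a bounded ratio — upgrades this to the claimed bound for all but finitely many $n$. Integrating over $\mathbb{P}$-a.e. $\omega$ (or using Fubini) gives the stated joint ``$\mathbb{P}$-a.e. $\omega$ and $\mathbf{P}^{\omega}_0$-a.s.'' conclusion.

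The main obstacle I anticipate is obtaining a clean enough lower deviation bound for $\tau_x$ that is uniform over the relevant environments: one needs that $\mathbf{P}^{\omega}_0(\tau_x \leq \delta\, T^{\omega}(x))$ is small with the smallness quantitatively tied to $\delta$ (here $\delta \asymp \{\log n\}^{-(1+\varepsilon)}$), and this should not require knowing the variance of $\tau_x$ too precisely. A robust route is to bound $\mathbf{E}^{\omega}_0[\tau_x^2]$ (there is an explicit second-moment formula for birth-and-death chains in terms of the $\gamma_i$ and $\pi_i$) and apply the Paley--Zygmund inequality, but controlling the ratio $\mathbf{E}^{\omega}_0[\tau_x^2]/(\mathbf{E}^{\omega}_0[\tau_x])^2$ along $\mathbb{P}$-typical $\omega$ may itself need the structure of the environment; alternatively, a simpler argument uses only the Markov-type bound $\mathbf{P}^{\omega}_0(\tau_x \leq n) \leq$ (number of visits needed)/(expected visits) together with reversibility, trading a sharp constant for robustness. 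Reconciling the exponent $1+\varepsilon$ and the factor $2$ in $2n\{\log(2n)\}^{1+\varepsilon}$ with the subsequence spacing is then a routine bookkeeping step. Since Lemma \ref{lem:MenshikovWade08Lem9(i)} is quoted from \cite{MenshikovPopovWade16}, in the paper one may simply cite it; the sketch above indicates the mechanism behind it.
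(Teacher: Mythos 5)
The paper does not actually prove this lemma: it is quoted from \cite{MenshikovPopovWade16} (Lemma 6.1.4 and Theorem 2.8.1), so there is no internal proof to compare with, and your closing remark that one may simply cite the reference is what the paper does. Your reduction is the right skeleton for the result behind the citation: fix a $\mathbb{P}$-typical $\omega$, read the hypothesis via Lemma \ref{lem:Etaux} as $\mathbf{E}^{\omega}_0[\tau_x]=T^{\omega}(x)\geq t_1(x)$, work along dyadic times $n_k$ with levels $x_k=t_1^{-1}(2n_k\{\log(2n_k)\}^{1+\varepsilon})$, use $\{Y_n> x_k \text{ for some } n\in[n_k,n_{k+1})\}\subseteq\{\tau_{x_k}\leq n_{k+1}\}$, Borel--Cantelli, and monotone interpolation. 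However, the step you flag as ``the main obstacle'' is the entire content of the lemma, and your proposal does not supply it: you need a quenched lower-deviation bound of the form $\mathbf{P}^{\omega}_0(\tau_x\leq\delta\,T^{\omega}(x))\leq C\delta$ with $\delta\asymp\{\log n_k\}^{-(1+\varepsilon)}$, summable in $k$. The routes you suggest for it would not work: Chebyshev and Paley--Zygmund give a bound tending to $0$ with $\delta$ only when $\mathbf{E}^{\omega}_0[\tau_x^2]\sim (T^{\omega}(x))^2$, i.e.\ when $\tau_x$ concentrates around its mean, and for birth--death chains in the environments relevant here this fails: the hitting time is dominated by geometric-type escape from the steep final stretch (or from a high-conductance ``trap'' edge), so $\mathbf{V}^{\omega}[\tau_x]\asymp (T^{\omega}(x))^2$, the probability of being below a $\delta$-fraction of the mean is genuinely of order $\delta$, and second-moment inequalities can only produce a constant bound, not $O(\delta)$.

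The estimate that actually closes the argument is of excursion-counting (occupation-time) type rather than a moment bound. Starting from $0$, each excursion reaches level $x$ before returning to $0$ with probability exactly $1/h(x)$ (gambler's ruin with the resistances $\gamma_i$), and at most $n/2$ excursions begin by time $n$, so $\mathbf{P}^{\omega}_0(\tau_x\leq n)\leq n/h(x)$. In the positive recurrent case $Z^{\omega}<\infty$ one has $h(x)\geq T^{\omega}(x)/Z^{\omega}$ by \eqref{ineq:T(x)upperPosRec}, and this already yields the summable bound $\mathbf{P}^{\omega}_0(\tau_{x_k}\leq n_{k+1})\leq C Z^{\omega}\{\log(2n_k)\}^{-(1+\varepsilon)}$ needed in your Borel--Cantelli step; in the null recurrent case, in which the lemma is also applied in this paper (Theorem \ref{thm:Takei20Main2}(ii) and Theorem \ref{thm:Takei20Main3}(ii)--(iii)), $T^{\omega}(x)$ can be much larger than $h(x)$ and one must additionally control the durations of the excursions, which is precisely what Theorem 2.8.1 of \cite{MenshikovPopovWade16} does. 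So either carry out such an excursion-based argument or cite the reference as the paper does; as written, your sketch leaves the decisive inequality unproved and points toward methods that would fail.
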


As for the almost sure lower bound, we use the following version of Lemma 4.3 in \cite{HrynivMenshikovWade13}. No essential change is needed for the proof.
%Hryniv, Menshikov, and Wade 

\begin{lemma}%[cf. \cite{HrynivMenshikovWade13}, Lemma 4.3] 
\label{lem:HrynivMenshikovWade13Lem4.2bis} 
Let $t_2$ be an increasing, nonnegative function on $\mathbb{Z}_+$ with 
\[\sum_{x=1}^{\infty} \dfrac{t_2(x)}{t_2(x^2)}<\infty.
%, \quad \lim_{x \to \infty} \dfrac{T_2(x^{1/2})}{T_2(x)}=0. 
\]
If 
\begin{align*}
\mbox{$\mathbb{P}$-a.e. $\omega$,} \quad T^{\omega}(x) \leq t_2(x) \quad \mbox{for infinitely many $x \in \mathbb{Z}_+$}, 
\end{align*}
then for any $\varepsilon > 0$,
\[ \mbox{$\mathbb{P}$-a.e. $\omega$ and $\mathbf{P}^{\omega}_0$-a.s.,}\quad Y_n \geq t_2^{-1} \bigl( (1-\varepsilon)n \bigr)\quad \mbox{for all but finitely many $n$}. \]
\end{lemma}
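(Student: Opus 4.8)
The plan is to rephrase the conclusion in terms of the first hitting times $\tau_x$ and then to run a Borel--Cantelli argument along a rapidly growing subsequence of ``good'' levels. First I would reduce to hitting times. Since $\boldsymbol{Y}$ starts at $0$ and moves to nearest neighbours, $\{\tau_x\leq n\}$ is the same event as $\{\max_{0\leq m\leq n}Y_m\geq x\}$, and at $n=\tau_x$ one has $Y_n=x$; also $t_2^{-1}$ (understood as the generalized inverse) is non-decreasing. Hence it suffices to show that, for $\mathbb{P}$-a.e.\ $\omega$ and $\mathbf{P}^{\omega}_0$-a.s., there are infinitely many $x$ with $\tau_x\leq t_2(x)/(1-\varepsilon)$: for such an $x$, setting $n=\tau_x$ gives $(1-\varepsilon)n\leq t_2(x)$, hence $Y_n=x\geq t_2^{-1}((1-\varepsilon)n)$, and $\tau_x\to\infty$ along these $x$. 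By Lemma~\ref{lem:Etaux}, $\mathbf{E}^{\omega}_0[\tau_x]=T^{\omega}(x)$, and by hypothesis there is a full-$\mathbb{P}$-measure event $\Omega_0$ on which the random set $\mathcal{G}(\omega):=\{x\in\mathbb{Z}_+:T^{\omega}(x)\leq t_2(x)\}$ is infinite; I fix $\omega\in\Omega_0$ once and for all, so that $\mathcal{G}(\omega)$ is a fixed infinite set.

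Since $\mathcal{G}(\omega)$ is infinite, I would next choose $y_1<y_2<\cdots$ in $\mathcal{G}(\omega)$ with $y_{l+1}\geq y_l^{\,2}$ for every $l$, and write $\tau_{y_l}=\tau_{y_{l-1}}+D_l$ with $D_l:=\tau_{y_l}-\tau_{y_{l-1}}\geq 0$. It then suffices to establish the two claims: (A) $\tau_{y_{l-1}}\leq \tfrac{\varepsilon}{2(1-\varepsilon)}t_2(y_l)$ for all but finitely many $l$, $\mathbf{P}^{\omega}_0$-a.s.; and (B) $D_l\leq\tfrac{2-\varepsilon}{2(1-\varepsilon)}t_2(y_l)$ for infinitely many $l$, $\mathbf{P}^{\omega}_0$-a.s.\ Indeed, on the intersection of the events in (A) and (B) one has $\tau_{y_l}\leq t_2(y_l)/(1-\varepsilon)$ for infinitely many $l$, which by the previous paragraph finishes the proof.

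For (A): Markov's inequality and $T^{\omega}(y_{l-1})\leq t_2(y_{l-1})$ give
\[
\mathbf{P}^{\omega}_0\!\left(\tau_{y_{l-1}}>\tfrac{\varepsilon}{2(1-\varepsilon)}t_2(y_l)\right)\leq\frac{2(1-\varepsilon)}{\varepsilon}\cdot\frac{T^{\omega}(y_{l-1})}{t_2(y_l)}\leq\frac{2(1-\varepsilon)}{\varepsilon}\cdot\frac{t_2(y_{l-1})}{t_2(y_{l-1}^{\,2})},
\]
using that $t_2$ is non-decreasing and $y_l\geq y_{l-1}^{\,2}$. The $y_{l-1}$ are distinct positive integers, so the right-hand side is summable in $l$ because $\sum_{x\geq 1}t_2(x)/t_2(x^2)<\infty$; this is precisely where the hypothesis on $t_2$ is used, and (A) follows from the first Borel--Cantelli lemma. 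For (B): by the strong Markov property at $\tau_{y_{l-1}}$ and $y_l\in\mathcal{G}(\omega)$,
\[
\mathbf{E}^{\omega}_0\!\left[D_l\mid\mathcal{F}_{\tau_{y_{l-1}}}\right]=\mathbf{E}^{\omega}_{y_{l-1}}[\tau_{y_l}]=T^{\omega}(y_l)-T^{\omega}(y_{l-1})\leq t_2(y_l),
\]
so a further Markov estimate yields $\mathbf{P}^{\omega}_0(D_l\leq\tfrac{2-\varepsilon}{2(1-\varepsilon)}t_2(y_l)\mid\mathcal{F}_{\tau_{y_{l-1}}})\geq \tfrac{\varepsilon}{2-\varepsilon}=:\rho>0$, uniformly in $l$. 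With the filtration $\mathcal{G}_l:=\mathcal{F}_{\tau_{y_l}}$ (the event in (B) is $\mathcal{G}_l$-measurable and its conditional probability given $\mathcal{G}_{l-1}$ is at least $\rho$, and $\sum_l\rho=\infty$), the conditional (Lévy) Borel--Cantelli lemma gives (B).

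The step I expect to require real care is the dependence between the hitting times at successive levels: a bare Markov bound only makes each good level a ``success'' with probability bounded away from $0$, not close to $1$, so one cannot simply intersect events or invoke an independence-based argument. The resolution is the split into (A)$+$(B) along a subsequence with $y_{l+1}\geq y_l^{\,2}$: the sparseness makes the time already elapsed in reaching $y_{l-1}$ negligible compared with $t_2(y_l)$ — and it is exactly the summability $\sum_x t_2(x)/t_2(x^2)<\infty$, fed into an ordinary Borel--Cantelli, that secures this — while the conditional Borel--Cantelli lemma handles the only conditionally controlled increments $D_l$. This is the sense in which ``no essential change'' from Lemma~4.3 of \cite{HrynivMenshikovWade13} is needed: the structure is the same, run along an infinite (rather than cofinite) set of levels. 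The remaining points — the use of the generalized inverse $t_2^{-1}$, and the fact that fixing $\omega\in\Omega_0$ before the $\mathbf{P}^{\omega}_0$-argument removes any measurability concern about the $\omega$-dependent subsequence — are routine.
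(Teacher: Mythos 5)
Your argument is correct and is essentially the proof that the paper delegates to \cite{HrynivMenshikovWade13}: a sparse subsequence of good levels with $y_{l+1}\geq y_l^2$, Markov's inequality plus the first Borel--Cantelli lemma (this is exactly where $\sum_x t_2(x)/t_2(x^2)<\infty$ enters) to make the time already spent reaching $y_{l-1}$ negligible, and the strong Markov property with a conditional (or independence-based) Borel--Cantelli lemma to get the increment $\tau_{y_l}-\tau_{y_{l-1}}$ under control infinitely often; the paper itself gives no further details, saying only that no essential change from Lemma~4.3 of \cite{HrynivMenshikovWade13} is needed. One caveat: what your reduction actually yields is $Y_n \geq t_2^{-1}((1-\varepsilon)n)$ for \emph{infinitely many} $n$ (namely $n=\tau_{y_l}$), not ``for all but finitely many $n$'' as literally printed in the lemma; but the printed wording is evidently a slip, since in the regimes where the lemma is applied the walk is recurrent (so $Y_n=0$ infinitely often and the eventual bound cannot hold), and the proofs of Theorems \ref{thm:Takei20Main1}--\ref{thm:Takei20Main3} invoke the lemma only in the ``infinitely many $n$'' form, which is exactly what you prove.
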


%Hryniv, Menshikov, and Wade (2013) 

We list useful bounds for $T^{\omega}(x)$, easily derived from \eqref{eq:BDchainT(x)Def}: Eqs. \eqref{ineq:T(x)lower} and \eqref{ineq:T(x)upper} are due to \cite{HrynivMenshikovWade13}, Lemma 3.5.

\begin{lemma}
\label{lem:HrynivMenshikovWade13Lem3.5}
For any $x \in \mathbb{Z}_+$, we have
\begin{align}
T^{\omega}(x) & \geq h(x) \geq \max_{0 \leq i < x} \gamma_i \geq \gamma_{x-1},\quad \mbox{and} \label{ineq:T(x)lower} \\
T^{\omega}(x) &\leq 2 x^2 \left( \max_{0 \leq i < x} \gamma_i\right)\left( \max_{0 \leq j < x} \dfrac{1}{\gamma_j}\right). \label{ineq:T(x)upper}
\end{align}
If $\displaystyle Z^{\omega}=\sum_{i=0}^{\infty} \pi_i<\infty$, then \eqref{ineq:T(x)upper} can be improved as follows:
\begin{align}
T^{\omega}(x) & \leq Z^{\omega} h(x) \leq Z^{\omega} x \left( \max_{0 \leq i < x} \gamma_i \right). \label{ineq:T(x)upperPosRec}
\end{align}
\end{lemma}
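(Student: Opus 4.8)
\textbf{Proof proposal for Lemma \ref{lem:HrynivMenshikovWade13Lem3.5}.}

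The plan is to derive all three inequalities directly from the representations of $T^{\omega}(x)$ in \eqref{eq:BDchainT(x)Def}, using only the elementary facts that $\gamma_0 = \pi_0 = 1$ (since $w_{-1}=0$ forces $w_0 = \pi_0 = 1/\gamma_0$, and $\gamma_0 = 1$ by definition) and that $\pi_j = w_{j-1}+w_j = 1/\gamma_{j-1} + 1/\gamma_j$ for $j \geq 1$. For \eqref{ineq:T(x)lower}, I would use the third form $T^{\omega}(x) = \sum_{i=0}^{x-1}\gamma_i \sum_{j=0}^i \pi_j$ and keep only the $j=0$ term in the inner sum, so that $T^{\omega}(x) \geq \sum_{i=0}^{x-1}\gamma_i \pi_0 = \sum_{i=0}^{x-1}\gamma_i = h(x)$; then $h(x) = \sum_{i=0}^{x-1}\gamma_i \geq \max_{0\le i<x}\gamma_i$ is immediate since all $\gamma_i > 0$, and $\max_{0\le i<x}\gamma_i \geq \gamma_{x-1}$ trivially.

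For \eqref{ineq:T(x)upper}, I would start from the second form $T^{\omega}(x) = \sum_{j=0}^{x-1}\pi_j\sum_{i=j}^{x-1}\gamma_i$. Bound the inner sum by $\sum_{i=j}^{x-1}\gamma_i \leq x\max_{0\le i<x}\gamma_i$ and similarly $\pi_j = 1/\gamma_{j-1}+1/\gamma_j \leq 2\max_{0\le k<x}(1/\gamma_k)$ for $1\le j\le x-1$, while $\pi_0 = 1 \le 2\max_{0\le k<x}(1/\gamma_k)$ as well (because $\gamma_0=1$, so that maximum is at least $1$). Summing over the $x$ values of $j$ gives the factor $2x$, and combined with the $x$ from the inner sum we get $T^{\omega}(x)\leq 2x^2(\max_{0\le i<x}\gamma_i)(\max_{0\le j<x}1/\gamma_j)$. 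For the improved bound \eqref{ineq:T(x)upperPosRec} under $Z^{\omega}<\infty$, I would again use $T^{\omega}(x) = \sum_{j=0}^{x-1}\pi_j\{h(x)-h(j)\} \leq h(x)\sum_{j=0}^{x-1}\pi_j \leq h(x)\sum_{j=0}^{\infty}\pi_j = Z^{\omega}h(x)$, using $h(x)-h(j)\le h(x)$ since $h$ is nondecreasing; then $h(x) = \sum_{i=0}^{x-1}\gamma_i \le x\max_{0\le i<x}\gamma_i$.

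There is essentially no obstacle here: every step is a one-line estimate, and the only point requiring a moment's care is handling the $j=0$ boundary term in the sum for $\pi_j$ (where the $1/\gamma_{j-1}$ piece is absent because $w_{-1}=0$), which is why I single it out above and use $\gamma_0=1$ to absorb it into the same bound. The representations in \eqref{eq:BDchainT(x)Def} themselves are quoted from Lemma \ref{lem:Etaux}, so I take those as given and do not reprove the equivalence of the three sums (it follows from interchanging the order of summation).
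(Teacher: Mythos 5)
Your proposal is correct and follows essentially the same route as the paper: all three bounds are read off directly from the double-sum representation \eqref{eq:BDchainT(x)Def}, with \eqref{ineq:T(x)lower} and \eqref{ineq:T(x)upperPosRec} as immediate consequences and \eqref{ineq:T(x)upper} obtained by bounding each $\gamma_i$ and $\pi_j$ by the respective maxima (the paper sums $\sum_{i=0}^{x-1}(i+1)=x(x+1)/2$ and then uses $x(x+1)\leq 2x^2$, whereas you bound both sums by $x$ terms outright -- an inessential difference in bookkeeping). Your handling of the boundary term $\pi_0=1$ via $\gamma_0=1$ is also consistent with the paper's conventions.
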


\iffalse
%Omitted
%200426
\begin{proof}
\eqref{ineq:T(x)lower} and \eqref{ineq:T(x)upperPosRec} are direct consequences from \eqref{eq:BDchainT(x)Def}. \eqref{ineq:T(x)upper} follows from
\begin{align*}
T^{\omega} (x) = \sum_{i=0}^{x-1} \gamma_i \sum_{j=0}^i \pi_j 
&\leq 2\left( \max_{0 \leq j < x} \dfrac{1}{\gamma_j}\right) \sum_{i=0}^{x-1} (i+1) \gamma_i \\
&\leq 2\left( \max_{0 \leq i < x} \gamma_i\right)\left( \max_{0 \leq j < x} \dfrac{1}{\gamma_j}\right) \sum_{i=0}^{x-1} (i+1) \\
&= x(x+1) \left( \max_{0 \leq i < x} \gamma_i\right)\left( \max_{0 \leq j < x} \dfrac{1}{\gamma_j}\right),
\end{align*}
and $x(x+1) \leq 2x^2$ for $x>0$.
\end{proof}
\fi

\subsection{LERRW with $\Delta=0$} As a warm-up, we prove almost sure bounds for the case $\Delta=0$. We use the next lemma, which is an infinite series version of l'H\^{o}pital's rule, due to Stolz and Ces\`aro. %l'Hospital's rule

%Knopp (1956) p.34 
\begin{lemma}[see e.g. Knopp \cite{Knopp56Dover}, p. 34] \label{lem:StolzCesaro} If a real sequence $\{a_n\}$ and a positive sequence $\{b_n\}$ satisfy
\[ \lim_{n\to \infty} \dfrac{a_n}{b_n} = L \in \mathbb{R} \cup \{ \pm \infty \},\quad\mbox{and}\quad \sum_{n=1}^{\infty} b_n = +\infty, \]
then
\[ \lim_{n\to \infty} \dfrac{\sum_{k=1}^n a_k}{\sum_{k=1}^n b_k} = L. \]
\end{lemma}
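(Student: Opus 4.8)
The plan is to abbreviate $A_n := \sum_{k=1}^n a_k$ and $B_n := \sum_{k=1}^n b_k$; note $B_n > 0$ and, since $\sum_k b_k = +\infty$, $B_n \to \infty$. I would prove $A_n/B_n \to L$ by splitting each partial sum into a fixed ``head'' plus a ``tail'' controlled by the hypothesis on $a_k/b_k$, treating the three cases $L \in \mathbb{R}$, $L = +\infty$, $L = -\infty$ in turn, with the finite case being the substantive one.

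For $L \in \mathbb{R}$: fix $\varepsilon > 0$ and choose $N$ so that $|a_k - L b_k| < \varepsilon b_k$ for all $k > N$ (equivalent to $|a_k/b_k - L| < \varepsilon$ because $b_k > 0$). For $n > N$ write $A_n - L B_n = C_N + R_n$, where $C_N := \sum_{k=1}^{N}(a_k - L b_k)$ does not depend on $n$ and $|R_n| \le \sum_{k=N+1}^{n}|a_k - L b_k| < \varepsilon(B_n - B_N) \le \varepsilon B_n$. Dividing by $B_n$ gives $|A_n/B_n - L| \le |C_N|/B_n + \varepsilon$; since $B_n \to \infty$, this forces $\limsup_n |A_n/B_n - L| \le \varepsilon$, and letting $\varepsilon \downarrow 0$ completes the case.

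For $L = +\infty$: fix $M > 0$ and choose $N$ with $a_k > 2M b_k$ for all $k > N$. Then $A_n > \sum_{k=1}^{N} a_k + 2M(B_n - B_N)$ for $n > N$, so $A_n/B_n > 2M + (\sum_{k=1}^N a_k - 2M B_N)/B_n$, and the last term tends to $0$ as $B_n \to \infty$; hence $\liminf_n A_n/B_n \ge 2M$, and since $M$ is arbitrary, $A_n/B_n \to +\infty$. The case $L = -\infty$ is obtained by replacing $a_k$ with $-a_k$.

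I do not expect any genuine obstacle — this is the classical Stolz--Ces\`aro theorem — so in the paper it would simply be quoted. The one point to keep in mind is that the fixed head term is driven to zero solely by $B_n \to \infty$, and it is precisely the hypothesis $\sum_k b_k = +\infty$ that supplies this (and that makes dividing by $B_n$ legitimate for large $n$); the conclusion genuinely requires it.
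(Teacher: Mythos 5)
Your proof is correct: the head/tail decomposition with $B_n\to\infty$ killing the fixed head term, plus the separate treatment of $L=\pm\infty$, is the standard and complete argument for Stolz--Ces\`aro. As you anticipated, the paper gives no proof of this lemma at all, simply citing Knopp, so there is nothing to compare beyond noting that your argument is exactly the classical one found in that reference.
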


\begin{proof}[Proof of Theorem \ref{thm:Takei20Main3}] Assume that $\alpha \leq 1$, and consider the random walk $\boldsymbol{Y}$ in a fixed environment $\{p_i\}$ given by \eqref{eq:1dRWREpxqxweight} and   $w_x=x^{\alpha} \vee 1$.
%\[
%w_x=\begin{cases}
%1 & (x=0), \\
%x^{\alpha} & (x >0). \\
%\end{cases} \]
Notice that $\displaystyle Z=\sum_{j=0}^{\infty} \pi_j<+\infty$ if and only if $\alpha <-1$. In this subsection we write $T(x)$ for $T^{\omega}(x)$.

\noindent (i) Suppose that $\alpha<-1$. Since
\begin{align*}
h(x)=1+\sum_{i=1}^{x-1} i^{-\alpha} \sim \dfrac{1}{1-\alpha}x^{1-\alpha} \quad \mbox{as $x \to \infty$,} 
\end{align*}
it follows from Lemma \ref{lem:HrynivMenshikovWade13Lem3.5} that
%\eqref{ineq:T(x)lower} and \eqref{ineq:T(x)upperPosRec} imply
for any $\varepsilon > 0$,
\begin{align*}
\dfrac{1-\varepsilon}{1-\alpha}x^{1-\alpha} \leq T(x) \leq  \dfrac{(1+\varepsilon)Z}{1-\alpha}x^{1-\alpha} \quad \mbox{for all but finitely many $x$.}
\end{align*}
By Lemmata \ref{lem:MenshikovWade08Lem9(i)} and \ref{lem:HrynivMenshikovWade13Lem4.2bis}, we have
\begin{align*}
Y_n \leq  \left(\dfrac{1-\alpha}{1-\varepsilon} \cdot 2n\{\log(2n)\}^{1+\varepsilon/2}\right)^{1/(1-\alpha)} \quad \mbox{for all large $n$,}
%\quad \mbox{$\mathbf{P}^{\omega}_0$-a.s..}
\intertext{and}
 Y_n \geq \left\{\dfrac{1-\alpha}{(1+\varepsilon)Z} \cdot (1-\varepsilon)n\right\}^{1/(1-\alpha)}\mbox{for infinitely many $n$,}
\end{align*}
$\mathbf{P}_0$-a.s. (Notice that $Z \geq \pi_0=1$). Thus we obtain the conclusion of (i).

\noindent (ii) When $\alpha=-1$, we have
\begin{align*}
 \gamma_i \sum_{j=0}^i \pi_j &= i\left( 2+ 2\sum_{j=1}^i \dfrac{1}{j} -\dfrac{1}{i}\right) \sim 2i \log i \quad \mbox{as $i \to \infty$.}
\end{align*}
By Lemma \ref{lem:StolzCesaro},
\begin{align*}
T(x) &= \sum_{i=0}^{x-1} \gamma_i \sum_{j=0}^i \pi_j \sim x^2\log x \quad \mbox{as $x \to \infty$.}
\end{align*}
For simplicity, we content ourselves with a weaker bound: For any $\varepsilon > 0$,
\begin{align*}
(1-\varepsilon) x^2 \leq T(x) \leq (1+\varepsilon) x^{2+\varepsilon}\quad \mbox{for all but finitely many $x$.}
\end{align*}
We can obtain the conclusion of (ii) by a similar calculation as in (i).

\noindent (iii) Suppose that $-1 < \alpha \leq 1$. We have
\begin{align*}
 \gamma_i \sum_{j=0}^i \pi_j &= i^{-\alpha}\left( 2+ 2\sum_{j=1}^i j^{\alpha} - i^{\alpha} \right) \sim %2i^{-\alpha} \cdot \dfrac{1}{\alpha+1}i^{\alpha+1} 
 \dfrac{2}{\alpha+1}i \quad \mbox{as $i \to \infty$,} 
\intertext{and}
T(x) &= \sum_{i=0}^{x-1} \gamma_i \sum_{j=0}^i \pi_j \sim \dfrac{1}{\alpha+1} x^2 \quad \mbox{as $x \to \infty$,}
\end{align*}
again by Lemma \ref{lem:StolzCesaro}. The rest of the proof is the same as above.
\end{proof}

\section{Proof of main theorems} \label{sec:proofmain}

The following proposition allows us to estimate the random resistance $\{\gamma_x\}_{x\in\mathbb{Z}_+}$.

\begin{proposition} \label{prop:Takei20SxAsymp} Assume that $\{ p_i(\omega) \}_{i \in \mathbb{N} }$ is a sequence of independent random variables, and the distribution of $p_i$ is $\mbox{\rm Beta}\left(\dfrac{w_0(i)}{2\Delta},\dfrac{w_0(i-1)+\Delta}{2\Delta} \right)$. Let 
\[ S_x := \log \gamma_x = \sum_{i=1}^x \log \dfrac{1-p_i}{p_i} \quad \mbox{for $x \in \mathbb{N}$}. \]
\begin{itemize}
\item[(i)] If $\alpha<1$ and $\Delta >0$, then
\[  \lim_{x \to \infty} \dfrac{S_x}{ x^{1-\alpha}}=\dfrac{1}{K(\alpha,\Delta)}\quad \mbox{$\mathbb{P}$-a.e. $\omega$,} \]
where $K(\alpha,\Delta)$ is defined in \eqref{eq:Takei20Main1Const}. 
\item[(ii)] If $\alpha=1$ and $\Delta>0$, then
\[ \lim_{x \to \infty} \dfrac{S_x}{ \log x}=\Delta-1 \quad  \mbox{$\mathbb{P}$-a.e. $\omega$.}  \]
\end{itemize}
\end{proposition}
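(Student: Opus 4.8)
The plan is to analyze $S_x = \sum_{i=1}^x \log\frac{1-p_i}{p_i}$ as a sum of independent (but not identically distributed) random variables and apply a strong law of large numbers for such sums, so the main work is an asymptotic analysis of the first two moments of $\log\frac{1-p_i}{p_i}$ when $p_i \sim \mbox{Beta}(a_i, b_i)$ with $a_i = \frac{w_0(i)}{2\Delta}$ and $b_i = \frac{w_0(i-1)+\Delta}{2\Delta}$. The key identity is that if $p \sim \mbox{Beta}(a,b)$, then $\mathbb{E}[\log p] = \Psi(a) - \Psi(a+b)$ and $\mathbb{E}[\log(1-p)] = \Psi(b) - \Psi(a+b)$, so $\mathbb{E}\bigl[\log\frac{1-p}{p}\bigr] = \Psi(b) - \Psi(a)$; similarly $\mathbb{V}\bigl[\log\frac{1-p}{p}\bigr] = \Psi'(a) + \Psi'(b)$ since $\log p$ and $\log(1-p)$ have covariance $-\Psi'(a+b)$ and the cross terms cancel. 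So I would first record these exact formulas, then feed in $a_i, b_i$ and extract asymptotics from the well-known expansions $\Psi(z) = \log z - \frac{1}{2z} + O(z^{-2})$ and $\Psi'(z) = \frac1z + O(z^{-2})$ as $z \to \infty$, together with the behavior near fixed arguments when $\alpha \le 0$.

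For part (i), I split into the three regimes of \eqref{eq:Takei20Main1Const}. When $0 < \alpha < 1$: $a_i = \frac{i^\alpha}{2\Delta} \to \infty$ and $b_i = \frac{(i-1)^\alpha + \Delta}{2\Delta} \to \infty$, so $\Psi(b_i) - \Psi(a_i) \approx \log\frac{b_i}{a_i} = \log\frac{(i-1)^\alpha + \Delta}{i^\alpha} \to 0$; more precisely one gets $\mathbb{E}[\log\frac{1-p_i}{p_i}] \sim \frac{\Delta}{i^\alpha} = \frac{2\Delta}{i^\alpha}\cdot\frac12$, and a careful expansion should give $\mathbb{E}[\log\frac{1-p_i}{p_i}] \sim \frac{\Delta}{i^\alpha}$ up to the leading term, hence by Stolz–Cesàro (Lemma \ref{lem:StolzCesaro}) $\mathbb{E}[S_x] = \sum_{i \le x}\mathbb{E}[\log\frac{1-p_i}{p_i}] \sim \frac{\Delta}{1-\alpha}x^{1-\alpha} = \frac{x^{1-\alpha}}{K(\alpha,\Delta)}$. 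Meanwhile $\mathbb{V}[\log\frac{1-p_i}{p_i}] \sim \frac{1}{a_i} + \frac{1}{b_i} \sim \frac{2\Delta}{i^\alpha}$, so $\sum_i \frac{\mathbb{V}[\log\frac{1-p_i}{p_i}]}{i^{2(1-\alpha)}}$ converges precisely when $2(1-\alpha) + \alpha > 1$, i.e. $\alpha < 1$, which is exactly our hypothesis; Kolmogorov's SLLN for independent sums with normalizing sequence $b_x = x^{1-\alpha}$ then yields $S_x/x^{1-\alpha} \to 1/K(\alpha,\Delta)$ a.s. When $\alpha < 0$: now $a_i = \frac{1}{2\Delta}$ (constant, since $w_0(i) = 1$) while $b_i = \frac{1 + \Delta}{2\Delta} + \frac{i^{|\alpha|}\text{-correction}}{}$—wait, more carefully, $w_0(i-1) = 1$ too for $i \ge 1$, so actually $b_i = \frac{1+\Delta}{2\Delta}$ is also constant; this means the summands are i.i.d.! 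Then $\mathbb{E}[\log\frac{1-p_i}{p_i}] = \Psi\bigl(\frac{1+\Delta}{2\Delta}\bigr) - \Psi\bigl(\frac{1}{2\Delta}\bigr)$, which by the identity $\Psi(z+1) - \Psi(z) = 1/z$ equals $\frac{2\Delta}{1} + \Psi\bigl(\frac{1}{2\Delta}\bigr) - \Psi\bigl(\frac{1}{2\Delta}\bigr)$—hmm that gives $2\Delta$, but $K(\alpha,\Delta) = \frac{1-\alpha}{2\Delta}$ would require the limit $\frac{1}{K} = \frac{2\Delta}{1-\alpha}$, contradicting a constant. So in fact $w_0(i)$ for the half-line with $f(0,x) = x^\alpha \vee 1$ and $\alpha < 0$ gives $w_0(x) = 1$ for all $x \ge 0$ since $x^\alpha \le 1$; that can't match $x^{1-\alpha}$ growth of $S_x$ unless I've mislabeled. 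Let me reconsider: with $\alpha<0$, $x^\alpha \vee 1 = 1$, so this is the $\Delta=1$-type homogeneous case but with the stated $\Delta$—and then $S_x$ is a random walk with constant drift $2\Delta > 0$, so $S_x \sim 2\Delta\, x = x^{1}$, matching $x^{1-\alpha}$ only if $\alpha = 0$; so actually for $\alpha < 0$ we have $1-\alpha > 1$ but $S_x \sim (\text{const})\cdot x$, contradiction — meaning I am misreading and the intended initial weight for $\alpha<0$ must be $x^\alpha$ literally (not $\vee 1$) on the relevant part, or the theorem's $\alpha<0$ branch uses a different normalization. I will treat \eqref{eq:InitWeightPowerIkenami} at face value for $0 \le \alpha \le 1$ and, for $\alpha < 0$, note that \eqref{eq:InitWeightPowerIkenami} still gives $w_0(x)=1$ hence $a_i = b_i' = \frac{1}{2\Delta}, \frac{1+\Delta}{2\Delta}$, giving an i.i.d. sum with mean $\Psi(\tfrac12 + \tfrac{1}{2\Delta}) - \Psi(\tfrac{1}{2\Delta})$; for $\alpha=0$ this is exactly $1/K(0,\Delta)$ and $S_x \sim x$—so the $\alpha<0$ and $\alpha=0$ rows coincide under \eqref{eq:InitWeightPowerIkenami}, and I would simply handle $\alpha \le 0$ together, recovering $K(0,\Delta)$, unless the paper intends weights without the $\vee 1$, in which case I defer to the author's stated constant. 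The cleanest exposition: for $\alpha \le 0$ the environment is i.i.d., apply the ordinary SLLN; for $0 < \alpha < 1$ apply Kolmogorov's SLLN as above.

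For part (ii), $\alpha = 1$: here $a_i = \frac{i}{2\Delta}$, $b_i = \frac{(i-1)+\Delta}{2\Delta}$, both $\to\infty$ linearly, and $\Psi(b_i) - \Psi(a_i) = \log\frac{b_i}{a_i} + O(1/i) = \log\frac{i-1+\Delta}{i} + O(1/i) = \frac{\Delta - 1}{i} + O(1/i^2)$, so $\mathbb{E}[S_x] = \sum_{i\le x}\frac{\Delta-1}{i} + O(1) \sim (\Delta-1)\log x$. The variance is $\mathbb{V}[\log\frac{1-p_i}{p_i}] \sim \frac{1}{a_i} + \frac{1}{b_i} \sim \frac{4\Delta}{i}$, and $\sum_i \frac{\mathbb{V}}{(\log i)^2} $ — the relevant Kolmogorov-type condition with normalization $b_x = \log x$ needs $\sum \frac{\mathbb{V}[\log\frac{1-p_i}{p_i}]}{(\log i)^2}$, but $\sum \frac{1}{i(\log i)^2} < \infty$, so it converges and Kolmogorov's SLLN gives $S_x/\log x \to \Delta - 1$ a.s. The main obstacle throughout is getting the \emph{exact leading constant} in the expansion of $\mathbb{E}[\log\frac{1-p_i}{p_i}] = \Psi(b_i) - \Psi(a_i)$ right — in particular tracking that the $-\frac{1}{2z}$ terms in $\Psi(a_i)$ and $\Psi(b_i)$ contribute at lower order than the leading $\log(b_i/a_i)$ in the $0<\alpha<1$ and $\alpha=1$ cases, and handling the discrete difference $(i-1)^\alpha$ vs $i^\alpha$ carefully when $0 < \alpha < 1$ so that the $\Delta$ from $w_0(i-1) + \Delta$ survives as the leading term; a secondary technical point is verifying the summability condition in whatever strong-law-for-independent-sums one invokes, which as noted reduces exactly to $\alpha < 1$ in part (i) and is automatic in part (ii). I would cite the moment formulas for the Beta distribution and the digamma expansion as standard (e.g. via $\log\Gamma$), invoke Lemma \ref{lem:StolzCesaro} for the mean asymptotics, and invoke Kolmogorov's SLLN (or the Kolmogorov three-series / two-series theorem) for the fluctuation control.
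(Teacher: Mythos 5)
Your overall strategy is exactly the paper's: write $S_x$ as a sum of independent terms, use the Beta log-moment identities $\mathbb{E}[\log\frac{1-p}{p}]=\Psi(b)-\Psi(a)$ and $\mathbb{V}[\log\frac{1-p}{p}]=\Psi'(a)+\Psi'(b)$, extract asymptotics of $\mathbb{E}[S_x]$ and $\mathbb{V}[S_x]$ from digamma/trigamma expansions, and finish with a Kolmogorov-type strong law (the paper uses the form ``$(S_x-\mathbb{E}S_x)/(\mathbb{V}[S_x])^{1/2+\delta}\to 0$ a.s.''; your normalized-variance-series version is equivalent in effect). For $0\le\alpha\le 1$ your computations give the right constants (the harmless slip $\frac{1}{a_i}+\frac{1}{b_i}\sim\frac{4\Delta}{i^\alpha}$, not $\frac{2\Delta}{i^\alpha}$, does not matter for the summability check), and your $\alpha=0$ i.i.d. reduction is exactly the paper's.

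The genuine gap is the branch $\alpha<0$ of part (i), which you abandon. The displayed case definition in \eqref{eq:InitWeightPowerIkenami} (and the constant $K(\alpha,\Delta)=\frac{1-\alpha}{2\Delta}$ itself) make clear that for $x\in\mathbb{N}$ the initial weight is $x^{\alpha}$ even when $\alpha<0$, so $w_0(i)\to 0$; the ``$\vee 1$'' is only there to fix the value at $x=0$. Your ``contradiction'' comes solely from reading $w_0(x)\equiv 1$, after which you defer to the author rather than prove anything. With the correct weights the analysis is qualitatively different from your $0<\alpha<1$ case and you have not set up the tools for it: one needs the \emph{small-argument} asymptotics $\Psi(z+\tfrac12)-\Psi(z)\sim\frac{1}{z}$ and $\Psi'(z)\sim\frac{1}{z^2}$ as $z\to 0$ (you only quote the $z\to\infty$ expansions and ``behavior near fixed arguments''), giving $\mathbb{E}[S_x]\sim\frac{2\Delta}{1-\alpha}x^{1-\alpha}$ --- note the factor $2\Delta$, not $\Delta$ --- and the larger variance $\mathbb{V}[S_x]\asymp x^{1-2\alpha}$, after which the strong-law condition still holds because $\frac{1-2\alpha}{2}<1-\alpha$; none of this appears in your write-up. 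A secondary point: in part (ii) your chain ``$\Psi(b_i)-\Psi(a_i)=\log\frac{b_i}{a_i}+O(1/i)=\frac{\Delta-1}{i}+O(1/i^2)$'' is internally inconsistent, since an honest $O(1/i)$ error would sum to order $\log x$ and destroy the constant $\Delta-1$; the conclusion is saved only because the $-\frac{1}{2z}$ corrections at $a_i$ and $b_i$ cancel to $O(i^{-2})$ (they are individually of the \emph{same} order as the leading term, not smaller), and that cancellation needs to be verified explicitly rather than asserted.
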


The proof of Proposition \ref{prop:Takei20SxAsymp} consists of several steps, and will be given in the next section.
% \ref{sec:ProofPropTakei20SxAsymp}.
We prove our main results first. Notice that 
\begin{align}
 \displaystyle Z^{\omega} = \sum_{x=0}^{\infty} \pi_x<+\infty \quad \mbox{if $\alpha<1$ and $\Delta>0$, or $\alpha=1$ and $\Delta>2$.} \label{cond:LERRWPosRec}
\end{align}

\begin{proof}[Proof of Theorem \ref{thm:Takei20Main1}] We fix $\alpha<1$ and $\Delta>0$, and write $K=K(\alpha,\Delta)$. 

By Proposition \ref{prop:Takei20SxAsymp} (i) and Eq. \eqref{ineq:T(x)lower}, for any $\varepsilon>0$, 
\begin{align*}
 T^{\omega}(x) \geq \gamma_{x-1} \geq \exp\left(\dfrac{1-\varepsilon}{K} x^{1-\alpha} \right) \quad \mbox{for all large $x$.}
\end{align*}
%the inverse function of $T_1(x)=\exp((1-\varepsilon) (1/K)x^{1-\alpha})$ is 
%\[ T_1^{-1}(x)= \left\{ \dfrac{K\log x}{1-\varepsilon}\right\}^{1/(1-\alpha)} \]
By Lemma \ref{lem:MenshikovWade08Lem9(i)}, $\mathbb{P}$-a.e. $\omega$ and $\mathbf{P}^{\omega}_0$-a.s.,
\begin{align*}
Y_n \leq \left\{ \dfrac{K \log  [2n\log \{(2n)^{1+\varepsilon} \}]}{1-\varepsilon}\right\}^{1/(1-\alpha)} \quad \mbox{for all large $n$,}
\end{align*}
which implies 
\begin{align*}
\limsup_{n \to \infty} \dfrac{Y_n}{(K\log n)^{1/(1-\alpha)}} \leq \dfrac{1}{(1-\varepsilon)^{1/(1-\alpha)}}. %\quad \mbox{$\mathbf{P}^{\omega}_0$-a.s..}
\end{align*}
Thus we have
\begin{align*}
\limsup_{n \to \infty} \dfrac{X_n}{(K\log n)^{1/(1-\alpha)}} \leq  1\quad \mbox{$P$-a.s..}
\end{align*}

We turn to the lower bound. Fix an arbitrary $\varepsilon>0$. Using Proposition \ref{prop:Takei20SxAsymp} (i), we can see that
\begin{align*}
\max_{0 \leq i < x} \gamma_i \leq \exp\left( \dfrac{1+\varepsilon/2}{K} x^{1-\alpha} \right)\quad \mbox{for all large $x$.}
\end{align*}
By \eqref{cond:LERRWPosRec},
\begin{align*}
Z^{\omega} x \leq \exp\left( \dfrac{\varepsilon/2}{K} x^{1-\alpha} \right) \quad \mbox{for all large $x$.}
\end{align*}
It follows from \eqref{ineq:T(x)upperPosRec} that
\begin{align*}
T^{\omega}(x) \leq \exp\left( \dfrac{1+\varepsilon}{K} x^{1-\alpha} \right)\quad \mbox{ for all large $x$.}
\end{align*} 
%the inverse function of $T_2(x)=\exp((1+\varepsilon) (1/K)x^{1-\alpha})$ is
%\[ T_2^{-1}(x)= \left\{ \dfrac{K\log x}{1+\varepsilon}\right\}^{1/(1-\alpha)} \]
By Lemma \ref{lem:HrynivMenshikovWade13Lem4.2bis}, $\mathbb{P}$-a.e. $\omega$ and $\mathbf{P}^{\omega}_0$-a.s.,
\begin{align*}
Y_n \geq \left\{ \dfrac{K\log (1-\varepsilon)n}{1+\varepsilon}\right\}^{1/(1-\alpha)} \mbox{for infinitely many $n$,} %\quad \mbox{$\mathbf{P}^{\omega}_0$-a.s..}
\end{align*}
which implies
\begin{align*}
\limsup_{n \to \infty} \dfrac{Y_n}{(K\log n)^{1/(1-\alpha)}} \geq \dfrac{1}{(1+\varepsilon)^{1/(1-\alpha)}}. %\quad \mbox{$\mathbf{P}^{\omega}_0$-a.s..}
\end{align*}
Thus we have
\begin{align*}
\limsup_{n \to \infty} \dfrac{X_n}{(K\log n)^{1/(1-\alpha)}} \geq  1\quad \mbox{$P$-a.s..}
\end{align*}
This completes the proof.
\end{proof}

\begin{proof}[Proof of Theorem \ref{thm:Takei20Main2}] The proof of the case (i) (resp. (ii)) is closely related to that of the case (i) (resp. (iii)) of Theorem \ref{thm:Takei20Main3}.

\noindent (i) Assume that $\alpha=1$ and $\Delta>2$. Proposition \ref{prop:Takei20SxAsymp} (ii) implies that for any $\varepsilon \in (0,1)$,
\begin{align*}
 \gamma_i \geq \exp((\Delta-1-\varepsilon)\log i)=i^{\Delta-1-\varepsilon}\quad \mbox{for all large $i$}.
\end{align*}
By \eqref{ineq:T(x)lower},
\begin{align}
T^{\omega}(x) \geq h(x)=\sum_{i=0}^{x-1} \gamma_i \geq  \dfrac{(1-\varepsilon)x^{\Delta-\varepsilon}}{\Delta-\varepsilon} \quad \mbox{for all large $x$.} \label{ineq:Takei20Main2T(x)lower1}
\end{align}
%the inverse function of $T_1(x)=\dfrac{(1-\varepsilon)x^{\Delta-\varepsilon}}{\Delta-\varepsilon}$ is
%$T_1^{-1}(x)= \left(\dfrac{\Delta-\varepsilon}{1-\varepsilon} x\right)^{1/(\Delta-\varepsilon)}$
By Lemma \ref{lem:MenshikovWade08Lem9(i)}, $\mathbb{P}$-a.e. $\omega$ and $\mathbf{P}^{\omega}_0$-a.s.,
\begin{align*}
Y_n \leq  \left(\dfrac{\Delta-\varepsilon}{1-\varepsilon} \cdot 2n\{ \log (2n) \}^{1+\varepsilon} \right)^{1/(\Delta-\varepsilon)} \quad \mbox{for all large $n$,}
%\quad \mbox{$\mathbf{P}^{\omega}_0$-a.s..}
\end{align*}
which implies that
\begin{align*}
\lim_{n \to \infty} \dfrac{X_n}{n^{(1+\varepsilon)/(\Delta-\varepsilon)}} =0
%\leq \left(\dfrac{\Delta-\varepsilon}{1-\varepsilon} \cdot 2^{1+\varepsilon}\right)^{1/(\Delta-\varepsilon)}
 \quad \mbox{$P$-a.s..}
\end{align*}
%In this case we have $Z^{\omega}<+\infty$.
Now we turn to the lower bound. By Proposition \ref{prop:Takei20SxAsymp} (ii) and \eqref{cond:LERRWPosRec}, for any $\varepsilon>0$, 
\begin{align*}
 \gamma_i \leq \exp((\Delta-1+\varepsilon/2) \log i)=i^{\Delta-1+\varepsilon/2},
\end{align*}
and $Z^{\omega} \leq i^{\varepsilon/2}$ for all large $i$, which together with \eqref{ineq:T(x)upperPosRec} imply that
\begin{align*}
T^{\omega} (x) \leq Z^{\omega} h (x) \leq  \dfrac{(1+\varepsilon)x^{\Delta+\varepsilon}}{\Delta+\varepsilon/2}\quad \mbox{for all large $x$.}
\end{align*}
%the inverse function of $T_2(x)=\dfrac{(1+\varepsilon)x^{\Delta+2\varepsilon}}{\Delta+\varepsilon/2}$ is
%$T_2^{-1}(x)=\left(\dfrac{\Delta+\varepsilon/2}{1+\varepsilon} x\right)^{1/(\Delta+\varepsilon)}$
By Lemma \ref{lem:HrynivMenshikovWade13Lem4.2bis}, $\mathbb{P}$-a.e. $\omega$ and $\mathbf{P}^{\omega}_0$-a.s.,
\begin{align*}
 Y_n \geq \left\{\dfrac{\Delta+\varepsilon/2}{1+\varepsilon} \cdot (1-\varepsilon)n\right\}^{1/(\Delta+\varepsilon)}\mbox{for infinitely many $n$,}
%\quad \mbox{$\mathbf{P}^{\omega}_0$-a.s..}
\end{align*}
which implies that
\begin{align*}
\limsup_{n \to \infty} \dfrac{X_n}{n^{1/(\Delta+2\varepsilon)}}=+\infty %\geq \left\{ \dfrac{(\Delta+\varepsilon/2)(1-\varepsilon)}{1+\varepsilon} \right\}^{1/(\Delta+\varepsilon)} 
\quad \mbox{$P$-a.s..}
\end{align*}

\noindent
(ii) First we assume that $\alpha=1$ and $0<\Delta< 2$. For any $\varepsilon > 0$,
\begin{align*}
i^{\Delta-1-\varepsilon} \leq \gamma_i \leq i^{\Delta-1+\varepsilon}\quad \mbox{for all large $i$.}
% \gamma_i &\geq \exp((\Delta-1-\varepsilon)\log i)=i^{\Delta-1-\varepsilon}, \quad \mbox{and} \\
% \gamma_i &\leq \exp((\Delta-1+\varepsilon)\log i)=i^{\Delta-1+\varepsilon} 
\end{align*}
%for all large $i$.
Notice that
\begin{align*}
T^{\omega}(x) &=\sum_{i=0}^{x-1} \gamma_i \sum_{j=0}^i \pi_j = 1+ \sum_{i=1}^{x-1} \gamma_i \left( 2 + 2 \sum_{j=1}^i \dfrac{1}{\gamma_j} - \dfrac{1}{\gamma_i} \right),
\end{align*}
and
\begin{align*}
\sum_{i=1}^{x-1} i^{\Delta-1 \pm \varepsilon} \sum_{j=1}^i j^{-(\Delta-1) \pm \varepsilon} \sim \dfrac{1}{(2-\Delta\pm \varepsilon)(2\pm2\varepsilon)}x^{2\pm2\varepsilon} \quad \mbox{as $x \to \infty$.}
\end{align*}
For any $\varepsilon>0$, we have
\begin{align}
T^{\omega}(x) \leq \dfrac{1+\varepsilon}{(2-\Delta+\varepsilon)(2+2\varepsilon)}x^{2+2\varepsilon} \quad \mbox{for all large $x$.} \label{ineq:Takei20Main2T(x)upper2}
\end{align}
On the other hand, for any $\varepsilon \in (0,(\Delta -2) \wedge 1)$,
\begin{align*}
T^{\omega}(x) \geq \dfrac{1-\varepsilon}{(2-\Delta-\varepsilon)(2-2\varepsilon)}x^{2-2\varepsilon} \quad \mbox{for all large $x$.} 
\end{align*}
The rest of proof is similar to that of Theorem \ref{thm:Takei20Main3} (iii).

For the case $\alpha=1$ and $\Delta=2$, note that \eqref{ineq:Takei20Main2T(x)lower1} is actually valid for any $\Delta>0$, and \eqref{ineq:Takei20Main2T(x)upper2} is true also for $\Delta=2$. This completes the proof.
\end{proof}

\section{Proof of Proposition \ref{prop:Takei20SxAsymp}}
\label{sec:ProofPropTakei20SxAsymp}

Define $\zeta_i:=\log \dfrac{1-p_i}{p_i}$ for $i \in \mathbb{N}$. We begin with a particularly simple case, $\alpha=0$ and $\Delta>0$.
Since $\{p_i\}_{i \in \mathbb{N}}$ is an i.i.d. sequence 
with $\mbox{Beta} \left(\dfrac{1}{2\Delta},\dfrac{1+\Delta}{2\Delta} \right)$, 
the strong law of large numbers for i.i.d. sequences together with (4.1) in \cite{Takeshima00} imply that
\[ 
\lim_{x \to \infty} \dfrac{S_x}{x} = \mathbb{E} [\zeta_1]=\Psi\left(\frac{1}{2\Delta}+\dfrac{1}{2} \right) - \Psi\left(\frac{1}{2\Delta}\right) 
\quad \mbox{$\mathbb{P}$-a.e. $\omega$.}
\]
If $\Delta=1$, then we have
\begin{align}
 \Psi(1) - \Psi\left(\frac{1}{2}\right) &= \int_0^1 \left( \log\dfrac{1-u}{u}\right) \cdot \dfrac{1}{2\sqrt{u}}\,du \notag \\
 &= \int_0^1 \{\log(1+t) + \log (1-t) - 2 \log t\} \,dt \quad (t=\sqrt{u}) \notag \\
&= \log 4.
 \label{eq:digammaDifflog4}
\end{align}

To obtain the result for the other cases, we prepare some lemmata.
From (4.15) and (4.13) in \cite{Takeshima00}, for $x \in \mathbb{N}$, 
\begin{align}
\mathbb{E}[S_x] %&= \sum_{i=1}^x E\left[ \log \dfrac{1-\theta_i}{\theta_i} \right] \\
&= \sum_{i=1}^x \left\{ \Psi\left( \dfrac{w_0(i-1)+\Delta}{2\Delta} \right) - \Psi\left(\dfrac{w_0(i)}{2\Delta}\right) \right\} \label{eq:Takeshima00(4.15)a} \\
&= \Psi\left( \dfrac{w_0(0)}{2\Delta} \right) - \Psi\left( \dfrac{w_0(x)}{2\Delta} \right) + \sum_{i=0}^{x-1} \left\{ \Psi\left( \dfrac{w_0(i)}{2\Delta} +\dfrac{1}{2}\right) - \Psi\left(\dfrac{w_0(i)}{2\Delta}\right) \right\}, \label{eq:Takeshima00(4.15)b}\\
\mathbb{V}[S_x] %&= \sum_{i=1}^x V\left[ \log \dfrac{1-\theta_i}{\theta_i} \right] \\
&= \sum_{i=1}^x \left\{ \Psi'\left( \dfrac{w_0(i-1)+\Delta}{2\Delta} \right) + \Psi'\left(\dfrac{w_0(i)}{2\Delta}\right) \right\}. \label{eq:Takeshima00(4.13)}
\end{align}
%where $\Psi(z) = \Gamma'(z) / \Gamma(z)$ is the digamma function.

\begin{lemma} \label{lem:Takeshima00(4.10)(4.12)improved}
We have
\begin{align}
 \Psi'(z) \sim  \begin{cases}
\dfrac{1}{z} & (z \to \infty), \\[2mm]
\dfrac{1}{z^2} &(z \to 0), \\
\end{cases} 
\label{asymp:Takeshima00(4.12)improved}
\end{align}
and
\begin{align}
 \Psi\left( z +\dfrac{1}{2}\right) - \Psi\left(z\right) \sim \begin{cases}
\dfrac{1}{2z} & (z \to \infty), \\[2mm]
\dfrac{1}{z} &(z \to 0). \\
\end{cases}
\label{asymp:Takeshima00(4.10)improved}
\end{align}
\end{lemma}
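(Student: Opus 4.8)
The plan is to read off all four asymptotics from the classical representations
\[
 \Psi'(z) = \sum_{k=0}^{\infty} \frac{1}{(z+k)^2}, \qquad
 \Psi\!\left(z+\tfrac12\right) - \Psi(z) = \sum_{k=0}^{\infty}\left( \frac{1}{z+k} - \frac{1}{z+k+\tfrac12}\right) = \frac12 \sum_{k=0}^{\infty} \frac{1}{(z+k)(z+k+\tfrac12)},
\]
valid for $z>0$, combined with elementary estimates.

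For $z\to 0$ I would isolate the $k=0$ term of each series. In the first series the remainder $\sum_{k\ge1}(z+k)^{-2}$ is increasing in $z$ and bounded above by $\sum_{k\ge1}k^{-2}=\pi^2/6$ on $(0,1]$, so $\Psi'(z)=z^{-2}+O(1)$, whence $\Psi'(z)\sim z^{-2}$. For the second series the $k=0$ term equals $\tfrac12\cdot\frac{1}{z(z+1/2)}\sim z^{-1}$, while the remainder $\tfrac12\sum_{k\ge1}\frac{1}{(z+k)(z+k+1/2)}$ stays bounded on $(0,1]$; hence $\Psi(z+\tfrac12)-\Psi(z)\sim z^{-1}$. (Equivalently, one may note that $\Psi(z)+z^{-1}$ remains bounded as $z\downarrow 0$.)

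For $z\to\infty$ I would sandwich the series between integrals. Since $t\mapsto(z+t)^{-2}$ is decreasing,
\[
 \frac1z=\int_0^\infty\frac{dt}{(z+t)^2}\le\sum_{k=0}^{\infty}\frac{1}{(z+k)^2}\le\frac1{z^2}+\int_0^\infty\frac{dt}{(z+t)^2}=\frac1z+\frac1{z^2},
\]
so $\Psi'(z)\sim z^{-1}$. Using the termwise bounds $(z+k+\tfrac12)^{-2}\le\frac{1}{(z+k)(z+k+1/2)}\le(z+k)^{-2}$, the difference $\Psi(z+\tfrac12)-\Psi(z)$ is squeezed between $\tfrac12\Psi'(z+\tfrac12)$ and $\tfrac12\Psi'(z)$, both of which are $\sim\frac1{2z}$ by the bound just obtained; therefore $\Psi(z+\tfrac12)-\Psi(z)\sim\frac1{2z}$. (Alternatively this is immediate from the expansion $\Psi(z)=\log z-\tfrac1{2z}+O(z^{-2})$.)

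There is no genuine obstacle here: everything reduces to isolating the singular $k=0$ term near $0$ and to integral comparison near $\infty$. The only point requiring mild care is to keep the comparisons two-sided so that the exact leading constants come out correctly — in particular the factor $\tfrac12$ in $\Psi(z+\tfrac12)-\Psi(z)\sim 1/(2z)$, and the change from $1/z$ to $1/z^2$ in the behaviour of $\Psi'$ between the two regimes — rather than only an order-of-magnitude estimate.
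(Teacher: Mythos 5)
Your proof is correct and follows essentially the same route as the paper: both rest on the series $\Psi'(z)=\sum_{k\ge0}(z+k)^{-2}$ and $\Psi(z+\tfrac12)-\Psi(z)=\tfrac12\sum_{k\ge0}\frac{1}{(z+k)(z+k+1/2)}$, isolate the singular $k=0$ term as $z\to0$, and use integral comparison as $z\to\infty$; your termwise squeeze $\tfrac12\Psi'(z+\tfrac12)\le\Psi(z+\tfrac12)-\Psi(z)\le\tfrac12\Psi'(z)$ merely replaces the paper's appeal to Takeshima's inequality $\Psi(z+\tfrac12)-\Psi(z)\ge\tfrac1{2z}$ and its monotonicity bound $\le\Psi(z+1)-\Psi(z)=\tfrac1z$, which is a cosmetic difference (and arguably more self-contained). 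One immaterial slip: the remainder $\sum_{k\ge1}(z+k)^{-2}$ is decreasing in $z$, not increasing, but the bound by $\pi^2/6$ holds anyway since each term is at most $k^{-2}$.
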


\begin{proof} The series expansions of $\Psi(z)$ and $\Psi'(z)$ are given by
\begin{align}
\Psi(z) &= -\gamma-\sum_{k=0}^{\infty} \left( \dfrac{1}{z+k}-\dfrac{1}{1+k} \right), \label{eq:Takeshima00(4.6)} \\
\Psi'(z) &= \sum_{k=0}^{\infty} \dfrac{1}{(z+k)^2}, \label{eq:Takeshima00(4.7)}
\end{align}
where $\gamma$ is Euler's constant. Eq. \eqref{asymp:Takeshima00(4.12)improved} can be obtained as follows:
\begin{align*}
\Psi'(z) = \dfrac{1}{z^2}+\sum_{\ell=1}^{\infty} \dfrac{1}{(z+\ell)^2}
\begin{cases}
\displaystyle \geq \dfrac{1}{z^2} + \int_1^{\infty} \dfrac{1}{(z+x)^2}\,dx = \dfrac{1}{z^2}+\dfrac{1}{z+1}, \\[3mm]
\displaystyle \leq \dfrac{1}{z^2} +\int_0^{\infty} \dfrac{1}{(z+x)^2}\,dx = \dfrac{1}{z^2}+\dfrac{1}{z}. \\
\end{cases}
\end{align*}
We turn to \eqref{asymp:Takeshima00(4.10)improved}. Since $\Psi$ is increasing and $\Psi'$ is decreasing, 
\begin{align}
\Psi\left( z + \dfrac{1}{2} \right) - \Psi(z) \begin{cases} \leq \Psi\left( z + 1 \right) - \Psi(z) = \dfrac{1}{z}, %& (0<z \leq 1), 
\vspace{1mm} \\[3mm]
\displaystyle = \int_z^{z+1/2} \Psi'(u) \,du  \leq \dfrac{1}{2} \Psi'(z) \leq\dfrac{1}{2z}+\dfrac{1}{2z^2}. %&(z \geq 1), 
\end{cases}
\label{ineq:Takeshima00(4.10)bis1}
\end{align}
We use the first bound for $0<z \leq 1$, and the second for $z \geq 1$.
On the other hand, by \eqref{eq:Takeshima00(4.6)}, 
\begin{align}
 \Psi\left( z + \dfrac{1}{2} \right) - \Psi(z) 
%=\sum_{\ell=0}^{\infty} \left( \dfrac{1}{z+\ell}- \dfrac{1}{z+\frac{1}{2}+\ell}\right) \\
= \dfrac{1}{2}\sum_{\ell=0}^{\infty} \dfrac{1}{(z+\ell)(z+\frac{1}{2}+\ell)} \geq \dfrac{1}{z(2z+1)},
\label{ineq:Takeshima00(4.10)bis2}
\end{align}
and from (4.10) in \cite{Takeshima00},
\begin{align}
\Psi\left( z + \dfrac{1}{2} \right) - \Psi(z) \geq \dfrac{1}{2z}.
\label{ineq:Takeshima00(4.10)bis3}
\end{align}
We use \eqref{ineq:Takeshima00(4.10)bis2} for $0<z \leq 1/2$, and \eqref{ineq:Takeshima00(4.10)bis3} for $z \geq 1/2$.
Thus \eqref{asymp:Takeshima00(4.10)improved} follows from \eqref{ineq:Takeshima00(4.10)bis1}--\eqref{ineq:Takeshima00(4.10)bis3}.
\end{proof}

We use the following bound also ((4.11) in \cite{Takeshima00}): For $s,t>0$, 
\begin{align}
 \log t - \log s - \dfrac{1}{t} \leq \Psi(t) - \Psi(s) \leq \log t - \log s + \dfrac{1}{s}. \label{eq:Takeshima00(4.11)}
\end{align}

\begin{lemma} \label{lem:E(Sx)asymp} Assume that $\alpha \leq 1$ and $\Delta>0$. As $x \to \infty$,
\begin{align*}
\mathbb{E}[S_x] \sim \begin{cases}
\dfrac{2\Delta}{1-\alpha} x^{1-\alpha}&(\alpha<0), \\
\dfrac{\Delta}{1-\alpha} x^{1-\alpha}&(0<\alpha<1), \\
(\Delta-1) \log x&(\alpha=1,\,\Delta \neq 1). \\
\end{cases}
\end{align*}
If $\alpha=\Delta=1$, then $\mathbb{E}[S_x] \equiv \log 4$ for all $x \in \mathbb{N}$, by \eqref{eq:Takeshima00(4.15)a} and \eqref{eq:digammaDifflog4}.
\end{lemma}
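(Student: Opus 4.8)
The plan is to compute the asymptotics of $\mathbb{E}[S_x]$ directly from the exact formula \eqref{eq:Takeshima00(4.15)b}, which expresses $\mathbb{E}[S_x]$ as a boundary term $\Psi(w_0(0)/2\Delta) - \Psi(w_0(x)/2\Delta)$ plus a sum $\sum_{i=0}^{x-1}\{\Psi(w_0(i)/2\Delta + 1/2) - \Psi(w_0(i)/2\Delta)\}$. With the specific initial weight $w_0(i) = i^\alpha \vee 1$, I would substitute $z = z_i := (i^\alpha\vee 1)/2\Delta$ into the two estimates from Lemma \ref{lem:Takeshima00(4.10)(4.12)improved}, namely $\Psi(z+1/2)-\Psi(z)\sim 1/(2z)$ as $z\to\infty$ and $\sim 1/z$ as $z\to 0$, and then sum over $i$.

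First I would dispose of the boundary term. For $\alpha<1$ we have $w_0(x)=x^\alpha$ with $x^\alpha/2\Delta \to \infty$ if $\alpha>0$ and $\to 0$ if $\alpha<0$ (and is the constant $1/2\Delta$ if $\alpha=0$); in all cases $\Psi(w_0(x)/2\Delta)$ grows at most like $\log x$ (for $0<\alpha<1$, by \eqref{eq:Takeshima00(4.11)}) or stays bounded (for $\alpha<0$, using $\Psi(z)\sim -1/z$ as $z\to 0$, so it is $O(x^{\alpha})\to 0$), hence is negligible compared to the claimed main term $x^{1-\alpha}$. For $\alpha=1$, $w_0(x)/2\Delta = x/2\Delta\to\infty$ and $\Psi(x/2\Delta) \sim \log(x/2\Delta)\sim \log x$ by \eqref{eq:Takeshima00(4.11)}, which is of the same order as the claimed main term $(\Delta-1)\log x$, so here the boundary term must be tracked carefully rather than discarded — this is the one genuinely delicate point in the $\alpha=1$ case.

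Next I would handle the sum. For $0<\alpha<1$: the terms with $i$ large have $z_i = i^\alpha/2\Delta\to\infty$, so $\Psi(z_i+1/2)-\Psi(z_i)\sim 1/(2z_i) = \Delta/i^\alpha$; summing, $\sum_{i=1}^{x-1}\Delta i^{-\alpha} \sim \frac{\Delta}{1-\alpha}x^{1-\alpha}$, and the finitely many small-$i$ terms plus the negligible boundary term give the stated $\mathbb{E}[S_x]\sim \frac{\Delta}{1-\alpha}x^{1-\alpha}$. For $\alpha<0$: now $z_i = i^\alpha/2\Delta\to 0$, so the term is $\sim 1/z_i = 2\Delta/i^\alpha = 2\Delta\, i^{|\alpha|}$, and $\sum_{i=1}^{x-1} 2\Delta\, i^{-\alpha}\sim \frac{2\Delta}{1-\alpha}x^{1-\alpha}$, again dominating the bounded boundary term. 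For $\alpha=1$, $\Delta\neq 1$: use \eqref{eq:Takeshima00(4.15)a}, writing $\mathbb{E}[S_x]=\sum_{i=1}^x\{\Psi((w_0(i-1)+\Delta)/2\Delta)-\Psi(w_0(i)/2\Delta)\}$ with $w_0(i)=i$ for $i\geq 1$; apply \eqref{eq:Takeshima00(4.11)} to each difference, so $\Psi(\frac{i-1+\Delta}{2\Delta})-\Psi(\frac{i}{2\Delta})$ is $\log\frac{i-1+\Delta}{i} + O(1/i) = \log(1 + \frac{\Delta-1}{i}) + O(1/i) = \frac{\Delta-1}{i} + O(1/i^2)$, and summing over $i\le x$ gives $(\Delta-1)\log x + O(1)$. (For $i=1$ one uses $w_0(0)=1$ separately, contributing a bounded amount.) Finally, the case $\alpha=\Delta=1$ is immediate from \eqref{eq:Takeshima00(4.15)a} and \eqref{eq:digammaDifflog4}, which already show $\mathbb{E}[S_x]\equiv\log 4$. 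I expect the main obstacle to be the $\alpha=1$, $\Delta\neq 1$ case, where the boundary term and the sum are of comparable size and one must use the sharp two-sided bound \eqref{eq:Takeshima00(4.11)} (rather than the cruder Lemma \ref{lem:Takeshima00(4.10)(4.12)improved} asymptotics) to see the $\log x$ coefficient is exactly $\Delta-1$; working directly from \eqref{eq:Takeshima00(4.15)a} as a telescoping-type sum sidesteps having to cancel the boundary term by hand.
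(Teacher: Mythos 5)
For $\alpha<0$ and $0<\alpha<1$ your argument is essentially the paper's: the same decomposition \eqref{eq:Takeshima00(4.15)b}, the same substitution of $z_i=w_0(i)/2\Delta$ into \eqref{asymp:Takeshima00(4.10)improved}, and the same comparison of the boundary term with the main term. The genuine gap is in the case $\alpha=1$, $\Delta\neq1$. There you apply \eqref{eq:Takeshima00(4.11)} to each of the $x$ differences in \eqref{eq:Takeshima00(4.15)a}; but that bound controls $\Psi(t)-\Psi(s)$ only up to errors $1/t$ and $1/s$, which for $t=\frac{i-1+\Delta}{2\Delta}$, $s=\frac{i}{2\Delta}$ are of order $1/i$, not $1/i^2$. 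Your chain ``$\log(1+\frac{\Delta-1}{i})+O(1/i)=\frac{\Delta-1}{i}+O(1/i^2)$'' silently upgrades that $O(1/i)$ to $O(1/i^2)$; with the true per-term error the sum over $i\le x$ only yields $(\Delta-1)\log x+O(\log x)$, i.e.\ an error of the same order as the main term, so the coefficient $\Delta-1$ is not identified. The irony is that your stated heuristic is reversed: it is the termwise use of \eqref{eq:Takeshima00(4.11)} that is too crude here. The paper instead stays with \eqref{eq:Takeshima00(4.15)b}: \eqref{eq:Takeshima00(4.11)} is applied only once, to the single boundary difference, giving $-\log x+O(1)$ (see \eqref{ineq:E(Sx)asymp}), while each summand $\Psi(\frac{i}{2\Delta}+\frac12)-\Psi(\frac{i}{2\Delta})$ is $\frac{\Delta}{i}$ up to a summable $O(1/i^2)$ correction by Lemma \ref{lem:Takeshima00(4.10)(4.12)improved}, so the sum is $\Delta\log x+o(\log x)$ and the two $\log x$ coefficients add to $\Delta-1\neq0$. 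If you want to keep your telescoped route, you need the sharper expansion $\Psi(z)=\log z-\frac{1}{2z}+O(z^{-2})$, under which the $\frac{1}{2z}$ corrections at $t$ and $s$ cancel to $\frac{\Delta(\Delta-1)}{i(i-1+\Delta)}=O(1/i^2)$ and your claimed per-term error becomes correct.

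A smaller point: for $\alpha<0$ the boundary term does not ``stay bounded.'' Since $\Psi(z)\sim-1/z$ as $z\to0$, we have $-\Psi(w_0(x)/2\Delta)\sim 2\Delta x^{-\alpha}$, which diverges; your ``$O(x^{\alpha})\to0$'' reverses the sign of the exponent. The conclusion you want still holds because $x^{-\alpha}=o(x^{1-\alpha})$ --- this is exactly what \eqref{ineq:E(Sx)asymp} records --- but the justification as written is incorrect. The remaining cases ($0<\alpha<1$, and $\alpha=\Delta=1$ via \eqref{eq:Takeshima00(4.15)a} and \eqref{eq:digammaDifflog4}) are fine and match the paper.
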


\begin{proof} By \eqref{eq:Takeshima00(4.11)} and
\[ \log \dfrac{w_0(0)}{2\Delta} - \log \dfrac{w_0(x)}{2\Delta} = -\alpha \log x, \]
the first term in \eqref{eq:Takeshima00(4.15)b} satisfies 
\begin{align}
 - \alpha \log x -2\Delta\leq
\Psi\left( \dfrac{w_0(0)}{2\Delta} \right) - \Psi\left( \dfrac{w_0(x)}{2\Delta} \right) \leq - \alpha \log x +2\Delta x^{-\alpha}.
\label{ineq:E(Sx)asymp}
\end{align}

First we assume that $0<\alpha \leq 1$. From \eqref{ineq:E(Sx)asymp}, we have
\begin{align}
\Psi\left( \dfrac{w_0(0)}{2\Delta} \right) - \Psi\left( \dfrac{w_0(x)}{2\Delta} \right) \sim - \alpha \log x \quad \mbox{as $x \to \infty$.}
\label{asymp:E(Sx)asympPos1}
\end{align}
Since $w_0(x) \to \infty$ as $x \to \infty$, by \eqref{asymp:Takeshima00(4.10)improved},
\begin{align*}
\Psi\left( \dfrac{w_0(i)}{2\Delta} +\dfrac{1}{2}\right) - \Psi\left(\dfrac{w_0(i)}{2\Delta}\right) \sim \dfrac{\Delta}{i^{\alpha}} \quad \mbox{as $i \to \infty$,}
\end{align*}
which implies
\begin{align}
&\sum_{i=0}^{x-1} \left\{ \Psi\left( \dfrac{w_0(i)}{2\Delta} +\dfrac{1}{2}\right) - \Psi\left(\dfrac{w_0(i)}{2\Delta}\right) \right\} \notag \\
&\sim \Delta \sum_{i=1}^{x-1}  \dfrac{1}{i^{\alpha}} 
\sim 
\begin{cases}
\dfrac{\Delta}{1-\alpha} x^{1-\alpha} &(0<\alpha<1), \\
\Delta \log x & (\alpha=1) \\
\end{cases}
\quad \mbox{as $x \to \infty$.}
\label{asymp:E(Sx)asympPos2}
\end{align}
Eqs. \eqref{asymp:E(Sx)asympPos1} and \eqref{asymp:E(Sx)asympPos2} give the conclusion for $0<\alpha \leq 1$.

Next we assume that $\alpha < 0$. Since $w_0(x)$ vanishes as $x \to \infty$, by \eqref{asymp:Takeshima00(4.10)improved},
\begin{align*}
\Psi\left( \dfrac{w_0(i)}{2\Delta} +\dfrac{1}{2}\right) - \Psi\left(\dfrac{w_0(i)}{2\Delta}\right) \sim  \dfrac{2\Delta}{i^{\alpha}}\quad \mbox{as $i \to \infty$,}
\end{align*}
which implies
\begin{align*}
\sum_{i=0}^{x-1} \left\{ \Psi\left( \dfrac{w_0(i)}{2\Delta} +\dfrac{1}{2}\right) - \Psi\left(\dfrac{w_0(i)}{2\Delta}\right) \right\} \sim 2\Delta \sum_{i=1}^{x-1}  \dfrac{1}{i^{\alpha}} \sim \dfrac{2\Delta}{1-\alpha} x^{1-\alpha}\quad \mbox{as $x \to \infty$.}
\end{align*}
In view of \eqref{ineq:E(Sx)asymp}, the second term is dominant in \eqref{eq:Takeshima00(4.15)b} as $x \to \infty$.
\end{proof}

\begin{lemma} \label{lem:V(Sx)asymp} Assume that $\alpha \leq 1$ and $\Delta>0$. As $x \to \infty$,
\begin{align*}
\mathbb{V}[S_x] \sim \begin{cases}
\dfrac{4\Delta^2}{1-2\alpha}x^{1-2\alpha}&(\alpha<0), \\[1mm]
\dfrac{4\Delta}{1-\alpha} x^{1-\alpha}&(0<\alpha<1), \\
4\Delta \log x&(\alpha=1). \\
\end{cases}
\end{align*}
\end{lemma}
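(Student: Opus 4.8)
The plan is to read off the behaviour of each of the two $\Psi'$-terms in the variance formula \eqref{eq:Takeshima00(4.13)}, inserting the asymptotics \eqref{asymp:Takeshima00(4.12)improved} for $\Psi'$ proved in Lemma \ref{lem:Takeshima00(4.10)(4.12)improved}, and then to turn the resulting asymptotics for the summands into an asymptotic for the partial sum $\mathbb{V}[S_x]=\sum_{i=1}^{x}\{\Psi'(\tfrac{w_0(i-1)+\Delta}{2\Delta})+\Psi'(\tfrac{w_0(i)}{2\Delta})\}$ by the Stolz--Ces\`aro lemma (Lemma \ref{lem:StolzCesaro}). Here $w_0(i)=i^{\alpha}$ for $i\in\mathbb{N}$ and $w_0(0)=1$; the exceptional value $w_0(0)$ enters only the $i=1$ term of \eqref{eq:Takeshima00(4.13)} and contributes one bounded quantity, negligible against the divergent main term in every case. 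The whole argument runs parallel to the proof of Lemma \ref{lem:E(Sx)asymp}.

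\emph{The regime $0<\alpha\le 1$.} Both arguments $\dfrac{w_0(i)}{2\Delta}=\dfrac{i^{\alpha}}{2\Delta}$ and $\dfrac{w_0(i-1)+\Delta}{2\Delta}=\dfrac{(i-1)^{\alpha}+\Delta}{2\Delta}\sim\dfrac{i^{\alpha}}{2\Delta}$ tend to $\infty$, so the $z\to\infty$ branch of \eqref{asymp:Takeshima00(4.12)improved} shows that each $\Psi'$-term is $\sim\dfrac{2\Delta}{i^{\alpha}}$, hence the $i$-th summand of \eqref{eq:Takeshima00(4.13)} is $\sim\dfrac{4\Delta}{i^{\alpha}}$. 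Since $\sum_{i\ge1}i^{-\alpha}=\infty$ for $\alpha\le1$, Lemma \ref{lem:StolzCesaro} with $b_i=i^{-\alpha}$ yields $\mathbb{V}[S_x]\sim 4\Delta\sum_{i=1}^{x}i^{-\alpha}$, i.e.\ $\mathbb{V}[S_x]\sim\dfrac{4\Delta}{1-\alpha}x^{1-\alpha}$ if $\alpha<1$, and $\mathbb{V}[S_x]\sim 4\Delta\log x$ if $\alpha=1$.

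\emph{The regime $\alpha<0$.} Now $\dfrac{w_0(i)}{2\Delta}=\dfrac{i^{\alpha}}{2\Delta}\to0$, so the $z\to0$ branch of \eqref{asymp:Takeshima00(4.12)improved} gives $\Psi'\!\left(\dfrac{i^{\alpha}}{2\Delta}\right)\sim\dfrac{4\Delta^{2}}{i^{2\alpha}}$, whereas $\dfrac{w_0(i-1)+\Delta}{2\Delta}\to\dfrac12$, so $\Psi'\!\left(\dfrac{w_0(i-1)+\Delta}{2\Delta}\right)$ stays bounded. Dividing the $i$-th summand by $i^{-2\alpha}$ therefore produces the limit $4\Delta^{2}$ (the bounded part contributes $O(i^{2\alpha})\to0$), and since $1-2\alpha>1$ we have $\sum_{i\ge1}i^{-2\alpha}=\infty$, so Lemma \ref{lem:StolzCesaro} gives $\mathbb{V}[S_x]\sim 4\Delta^{2}\sum_{i=1}^{x}i^{-2\alpha}\sim\dfrac{4\Delta^{2}}{1-2\alpha}x^{1-2\alpha}$.

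I do not expect a genuine obstacle here; the step deserving the most attention is simply the bookkeeping around the transition to the asymptotic regime. Concretely: the finitely many small indices $i$ for which $\tfrac{i^{\alpha}}{2\Delta}$ is not yet deep in the relevant regime (large when $\alpha>0$, small when $\alpha<0$) contribute only a bounded total and are negligible; one must check in each case that the comparison sequence ($\{i^{-\alpha}\}$ for $\alpha\le1$, resp.\ $\{i^{-2\alpha}\}$ for $\alpha<0$) has divergent partial sums so that Stolz--Ces\`aro applies; and in the regime $\alpha<0$ one must confirm that the ``reversed'' argument $\tfrac{w_0(i-1)+\Delta}{2\Delta}$ remains bounded away from both $0$ and $\infty$, so that its $\Psi'$ is of strictly lower order than $i^{-2\alpha}$ and does not affect the leading constant.
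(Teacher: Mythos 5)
Your proof is correct and follows essentially the same route as the paper: insert the $\Psi'$-asymptotics of Lemma \ref{lem:Takeshima00(4.10)(4.12)improved} into \eqref{eq:Takeshima00(4.13)} (the two arguments tending to $\infty$ when $0<\alpha\le1$, and to $0$ resp.\ $\tfrac12$ when $\alpha<0$) and pass from summand asymptotics to partial-sum asymptotics. The only cosmetic difference is that for $\alpha<0$ the paper estimates the two sums $\sum_i\Psi'(\tfrac{w_0(i-1)+\Delta}{2\Delta})\sim\Psi'(\tfrac12)x$ and $\sum_i\Psi'(\tfrac{w_0(i)}{2\Delta})\sim\tfrac{4\Delta^2}{1-2\alpha}x^{1-2\alpha}$ separately and notes the second dominates, whereas you absorb the bounded term into a single Stolz--Ces\`aro application; both are equivalent.
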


\begin{proof}
First we assume that $0<\alpha \leq 1$. Since $w_0(x) \to \infty$ as $x \to \infty$, by \eqref{asymp:Takeshima00(4.12)improved},
\begin{align*}
\Psi'\left(\dfrac{w_0(i)}{2\Delta}\right) &\sim \dfrac{2\Delta}{i^{\alpha}}, \quad \mbox{and} \\
\Psi'\left( \dfrac{w_0(i-1)+\Delta}{2\Delta} \right) &\sim \dfrac{2\Delta}{(i-1)^{\alpha}+\Delta}\sim \dfrac{2\Delta}{i^{\alpha}} \quad \mbox{as $i \to \infty$.}
\end{align*}
This together with \eqref{eq:Takeshima00(4.13)} implies 
\begin{align*}
\mathbb{V}[S_x] \sim 4\Delta \sum_{i=1}^x \dfrac{1}{i^{\alpha}} \sim \begin{cases}
\dfrac{4\Delta}{1-\alpha} x^{1-\alpha} &(0<\alpha<1), \\
4\Delta \log x&(\alpha=1)
\end{cases} \quad \mbox{as $x \to \infty$}.
\end{align*}

Next we assume that $\alpha < 0$. As $w_0(x)$ vanishes as $x \to \infty$, by \eqref{asymp:Takeshima00(4.12)improved},
\begin{align*}
\Psi'\left(\dfrac{w_0(i)}{2\Delta}\right) &\sim \left(\dfrac{2\Delta}{i^{\alpha}}\right)^2 =\dfrac{4\Delta^2}{i^{2\alpha}}, \quad \mbox{and}\\
\Psi'\left( \dfrac{w_0(i-1)+\Delta}{2\Delta} \right) &\to %\Psi'\left( \dfrac{\Delta}{2\Delta} \right) = 
\Psi'\left( \dfrac{1}{2}\right)>0
\quad \mbox{as $i \to \infty$,}
\end{align*}
which implies
\begin{align*}
\sum_{i=1}^x  \Psi'\left( \dfrac{w_0(i-1)+\Delta}{2\Delta} \right) &\sim \Psi'\left( \dfrac{1}{2}\right)x,\quad \mbox{and}\\
\sum_{i=1}^x \Psi'\left(\dfrac{w_0(i)}{2\Delta}\right) & \sim 4\Delta^2\sum_{i=1}^x\dfrac{1}{i^{2\alpha}} \sim \dfrac{4\Delta^2}{1-2\alpha}x^{1-2\alpha}
\end{align*}
as $x \to \infty$. This together with \eqref{eq:Takeshima00(4.13)} gives the conclusion.
\end{proof}

The following lemma is a consequence of Kolmogorov's strong law of large numbers (see e.g. \cite{Ito84}, Theorem 4.5.2).
\begin{lemma} \label{lem:takeshima00Lemma4.3} %Ito (1984), Theorem 4.5.2
Let $\{ \zeta_i \}$ be a sequence of independent, square-integrable random variables, and 
$\displaystyle S_x:=\sum_{i=1}^x \zeta_i$.
If $V[S_x]$ diverges as $x \to \infty$, then for any $\delta >0$,
\[ \lim_{x \to \infty} \dfrac{ S_x-E[S_x]}{ (V[S_x])^{1/2+\delta} } = 0\quad \mbox{a.s..} \]
In particular, if
\[\lim_{x \to \infty} \dfrac{(V[S_x])^{1/2+\delta}}{E[S_x]} = 0 \quad \mbox{for some $\delta>0$}, \]
then we have
\[ \lim_{x \to \infty} \dfrac{S_x}{E[S_x]}=1 \quad \mbox{a.s..}\]
\end{lemma}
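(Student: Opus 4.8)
The plan is to prove Lemma \ref{lem:takeshima00Lemma4.3} as a fairly direct consequence of Kolmogorov's strong law of large numbers for independent (not necessarily identically distributed) square-integrable random variables. Recall the classical statement: if $\{\xi_i\}$ are independent with $\sum_i \mathbb{V}[\xi_i]/i^2 < \infty$, then $\frac{1}{n}\sum_{i=1}^n (\xi_i - \mathbb{E}[\xi_i]) \to 0$ a.s. I would set this up by applying the SLLN not to the $\zeta_i$ themselves but to a suitably rescaled sequence, so that the divergence of $V[S_x] := \mathbb{V}[S_x] = \sum_{i=1}^x \mathbb{V}[\zeta_i]$ provides the needed normalization.

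First I would reduce to the centered case by replacing $\zeta_i$ with $\zeta_i - E[\zeta_i]$, so WLOG $E[\zeta_i]=0$ and $S_x$ is centered. Write $v_x := V[S_x]$, which is nondecreasing and, by hypothesis, $v_x \to \infty$. Fix $\delta>0$. The goal is $S_x/v_x^{1/2+\delta}\to 0$ a.s. The natural approach is a Kronecker-lemma argument: since $\sum_{i=1}^\infty \mathbb{V}[\zeta_i]/v_i^{1+2\delta}$ converges (here I would use the standard fact that for a nondecreasing sequence $v_i \uparrow \infty$ with increments $\mathbb{V}[\zeta_i] = v_i - v_{i-1}$ one has $\sum_i (v_i - v_{i-1})/v_i^{1+2\delta} < \infty$ because the sum is dominated by $\int_{v_1}^\infty t^{-1-2\delta}\,dt < \infty$ when $\delta>0$), the series $\sum_i \zeta_i / v_i^{1/2+\delta}$ has summable variances, hence converges a.s. by Kolmogorov's one-series theorem. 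Then Kronecker's lemma, applied with the weights $b_i := v_i^{1/2+\delta}$ (which are nondecreasing and tend to infinity), yields $\frac{1}{v_x^{1/2+\delta}}\sum_{i=1}^x \zeta_i \to 0$ a.s., which is exactly the first claim.

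For the second assertion, suppose $v_x^{1/2+\delta}/E[S_x]\to 0$ for some $\delta>0$ (note this forces $E[S_x]\to\infty$ and in particular $v_x\to\infty$, so the first part applies). Then
\begin{align*}
\frac{S_x}{E[S_x]} = \frac{S_x - E[S_x]}{v_x^{1/2+\delta}}\cdot\frac{v_x^{1/2+\delta}}{E[S_x]} + 1 \longrightarrow 0\cdot 0 + 1 = 1 \quad\text{a.s.,}
\end{align*}
using the first part for the first factor and the hypothesis for the second. This completes the proof.

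I expect the only genuine obstacle to be verifying the variance-summability estimate $\sum_i (v_i - v_{i-1})/v_i^{1+2\delta}<\infty$ cleanly; this is the point where the strict positivity of $\delta$ is used, and it should follow from comparison with the convergent integral $\int^\infty t^{-1-2\delta}\,dt$ (handling the case of possibly large jumps in $v_i$ by the elementary inequality $(v_i - v_{i-1})/v_i^{1+2\delta} \le \int_{v_{i-1}}^{v_i} t^{-1-2\delta}\,dt$, valid since $t\mapsto t^{-1-2\delta}$ is decreasing). Everything else is a routine invocation of Kolmogorov's one-series theorem and Kronecker's lemma, both standard (and the reference to \cite{Ito84}, Theorem 4.5.2, covers the SLLN we need). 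Alternatively, if one prefers to avoid Kronecker's lemma, one can cite directly a weighted SLLN in the form: for independent centered $\zeta_i$ and nondecreasing $b_i\uparrow\infty$ with $\sum_i \mathbb{V}[\zeta_i]/b_i^2<\infty$, one has $b_x^{-1}\sum_{i\le x}\zeta_i\to 0$ a.s.; applying this with $b_i = v_i^{1/2+\delta}$ gives the result in one line.
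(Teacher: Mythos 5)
Your proof is correct and takes essentially the same route as the paper: the paper disposes of this lemma by directly invoking the weighted form of Kolmogorov's strong law of large numbers (Ito, Theorem 4.5.2) with weights $b_x=(V[S_x])^{1/2+\delta}$, and your argument --- the integral-comparison check that $\sum_i \mathbb{V}[\zeta_i]/(V[S_i])^{1+2\delta}<\infty$, followed by Kolmogorov's one-series theorem and Kronecker's lemma --- is precisely the standard proof of that cited result, so the two approaches coincide. The only small inaccuracy is your parenthetical claim that the hypothesis $(V[S_x])^{1/2+\delta}/E[S_x]\to 0$ forces $V[S_x]\to\infty$ (it does not, e.g.\ for deterministic $\zeta_i$); but since the divergence of $V[S_x]$ is the lemma's standing assumption, this does not affect the validity of your proof.
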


The conclusion of Proposition \ref{prop:Takei20SxAsymp} for $\alpha \leq 1$ and $\alpha \neq 0$ follows from Lemmata \ref{lem:E(Sx)asymp}, \ref{lem:V(Sx)asymp}, and \ref{lem:takeshima00Lemma4.3}. 

\appendix

\section{Recurrence and transience preservation}
\label{sec:AppendixRecTraPres}

In this section, we give a new proof of Theorem \ref{thm:Takeshima00linear}.
Consider the ERRW $\boldsymbol{X}$ defined in Section \ref{sec:DefResults}, and set
\begin{align*}
%F_0= \sum_{x=0}^{\infty} \frac{1}{f(0,x)},
%\intertext{and}
\Phi_x :=  \sum_{\ell=0}^{\infty} \dfrac{1}{f(\ell,x)} \quad \mbox{for $x \in \mathbb{Z}_+$.}
\end{align*}
As in \cite{Takeshima01}, the following 0--1 law is our starting point (see \cite{Sellke94} or \cite{Takeshima01} for a proof).

\begin{lemma}[Sellke's 0--1 law] \label{Sellke} Consider the ERRW $\boldsymbol{X}$ on $\mathbb{Z}_+$. If $\Phi_x  = +\infty $  for all $x \in \mathbb{Z}_+$,  then $\boldsymbol{X}$ is  either  recurrent  a.s. or transient a.s..
\end{lemma}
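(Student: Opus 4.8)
My plan is to separate the statement into (a) the dichotomy — almost surely $\boldsymbol X$ is recurrent or transient, with no intermediate behaviour — and (b) the $0$--$1$ alternative $P(\text{recurrent})\in\{0,1\}$; and to treat the case actually used in the sequel, the linearly reinforced walk, by a short self-contained route. For (a) the tool is Rubin's urn argument (equivalently, the Rubin construction of $\boldsymbol X$ from independent edge-clocks). If a vertex $x$ is visited infinitely often and $\Phi_{x-1}=\Phi_x=+\infty$, then the clock attached to the edge $\{x-1,x\}$, whose successive rates are $f(0,x-1),f(1,x-1),\dots$ with reciprocal sum $\Phi_{x-1}=\infty$, never runs out, so $\{x-1,x\}$ is traversed infinitely often and hence $x-1$ is visited infinitely often; likewise for $\{x,x+1\}$ and $x+1$. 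Induction together with the reflection at $0$ shows the set of infinitely-often-visited vertices is either empty or all of $\mathbb Z_+$; since a nearest-neighbour path on $\mathbb Z_+$ with $\liminf_n X_n<\infty$ revisits some vertex infinitely often, a.s.\ $\boldsymbol X$ is recurrent (equivalently $0$ is visited infinitely often) or $X_n\to\infty$, and it remains to prove $P(R)\in\{0,1\}$ with $R=\{\,0\text{ visited infinitely often}\,\}$.

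For (b) in the linearly reinforced case no further urn argument is needed. By the representation of Section~\ref{sec:prelim} (Pemantle \cite{Pemantle88}), $\boldsymbol X$ is the annealed version of the RWRE $\boldsymbol Y$ in the independent environment $\{p_i\}$, and in a fixed environment $\omega$ the birth--death chain $\boldsymbol Y$ is recurrent iff $\sum_{x\ge0}\gamma_x(\omega)=+\infty$ (the classical criterion recalled there). Write $\gamma_x=e^{S_x}$ with $S_x=\sum_{i=1}^x\zeta_i$, $\zeta_i=\log\frac{1-p_i}{p_i}$ \emph{independent}. Altering finitely many $\zeta_i$ multiplies $\sum_{x\ge N}\gamma_x$ by a fixed positive constant (and changes the finite block $\sum_{x<N}\gamma_x$ only), so $\{\sum_x\gamma_x=\infty\}$ belongs, modulo $\mathbb P$-null sets, to the tail $\sigma$-field of $\{\zeta_i\}$; Kolmogorov's $0$--$1$ law gives $\mathbb P(\sum_x\gamma_x=\infty)\in\{0,1\}$. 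Since $P(\boldsymbol X\text{ recurrent})=\mathbb E\bigl[\mathbf P^\omega_0(\boldsymbol Y\text{ recurrent})\bigr]=\mathbb P(\sum_x\gamma_x=\infty)$ and similarly $P(\boldsymbol X\text{ transient})=\mathbb P(\sum_x\gamma_x<\infty)$, we obtain $P(R)\in\{0,1\}$ and $P(\text{recurrent})+P(\text{transient})=1$ simultaneously — so in the linear case even step (a) is superfluous.

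For the general (nonlinear) ERRW the mixture-of-Markov-chains structure is not available, and after the reduction in (a) one follows Sellke: one identifies $R$, up to a $P$-null set, with a tail-type event of the driving randomness, using the Rubin construction together with the partial-exchangeability (``magic formula'') symmetry of ERRW path probabilities. The main obstacle here — and the reason the linear case trivialises — is that on $\mathbb Z_+$ the probability of a loop at $0$ is \emph{not} a function of the edge-traversal counts alone once $f$ is nonlinear, because the normalising vertex sums depend on the order in which excursions occur; the symmetry must therefore be used indirectly, e.g.\ by applying Lévy's $0$--$1$ law to $P(R\mid X_0,\dots,X_n)$, which given the past is the recurrence probability of an ERRW restarted at $X_n$ with the updated weights (and still-infinite $\Phi$'s), and then arguing that this conditional probability cannot stabilise at a value in $(0,1)$. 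I expect that last comparison to be the hard step in the general setting; it is carried out in \cite{Sellke94,Takeshima01}, while for the present paper it is bypassed entirely by the argument of the previous paragraph.
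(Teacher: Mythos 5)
Note first that the paper itself gives no proof of Lemma \ref{Sellke}: it is quoted as a known result, with the proof delegated to \cite{Sellke94} and \cite{Takeshima01}. Measured against the statement, your argument is correct and self-contained precisely in the linear case: via Pemantle's representation, quenched recurrence of $\boldsymbol{Y}$ is the event $\{\sum_x\gamma_x=\infty\}$, which (since $p_i\in(0,1)$ a.s. and altering finitely many $\zeta_i$ only multiplies the tail sums by a positive constant) is a tail event of the independent $\zeta_i$ modulo null sets, so Kolmogorov's 0--1 law yields both the dichotomy and the 0--1 alternative at once, and $P(\boldsymbol{X}\ \text{recurrent})=\mathbb{P}(\sum_x\gamma_x=\infty)\in\{0,1\}$. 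This is a genuinely different and more elementary route than Sellke's, and it suffices for the recurrence classification of the LERRW (Theorem \ref{thm:Takeshima00linear}), where $\Phi_x=\infty$ holds automatically. Your step (a) is the standard Rubin/conditional Borel--Cantelli argument and is fine in outline, though ``the clock never runs out'' glosses over the actual point, namely that $\Phi_x=\infty$ forces divergence of the even-indexed subsums $\sum_k 1/f(2k,x)$, which is what the conditional Borel--Cantelli estimate at successive visits uses.

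The genuine gap is that the lemma is stated, and is used in this paper, for the general ERRW with arbitrary nondecreasing schemes $\mathbf{f}_x$, and your proof of the 0--1 part covers only the linear case. Your claim that the general case ``is bypassed entirely'' here is not accurate: the appendix applies Lemma \ref{Sellke} to the general ERRW, both in the transience-preservation theorem ($\Phi_x\equiv+\infty$ and $F_0<+\infty$ imply a.s. transience) and in the `up-only' recurrence observation, and in those settings the mixture-of-Markov-chains structure is unavailable, so the Kolmogorov 0--1 law argument does not apply. For that general setting you only sketch Sellke's strategy (L\'evy's 0--1 law applied to $P(R\mid X_0,\dots,X_n)$ together with a comparison of the restarted walk's recurrence probability) and explicitly leave its key step unproved, deferring to \cite{Sellke94} and \cite{Takeshima01} --- exactly the citations the paper itself relies on. So as a proof of the statement as written the attempt is incomplete; it genuinely replaces the citation only for the LERRW specialization.
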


Let $\tau := \inf\{ n>0 : X_n = 0 \}$ and 
\[ \displaystyle M_n := \sum_{x=0}^{X_{n \wedge \tau}-1} \dfrac{1}{w_n(x)} \quad \mbox{for $n \in \mathbb{N}$}, \mbox{ with $M_0= 0$}. \]
The process $\{ M_n \}$ is in general a non-negative supermartingale, and the process $\{ \Theta_n \}$ defined by
% and will play a major role in our proofs. 
\[ \Theta_n := M_n + \sum_{m=0}^{n-1} \left\{ \dfrac{1}{w_m (X_m )} - \dfrac{1}{w_{m+1} (X_m )} \right\} \cdot 1_{\{ X_{m \wedge \tau} < X_{(m+1) \wedge \tau} \}}, \]
is a nonnegative martingale (see Lemma 3.0 in \cite{Davis90} for details). As% in the Appendix of \cite{ACT19}, we have}
\[
\Theta_{m+1}-\Theta_m = 
\begin{cases}
\dfrac{1}{w_m(X_m)} &\mbox{if $X_{m+1}=X_m+1$,} \\[3mm]
\dfrac{-1}{w_m(X_m-1)} &\mbox{if $X_{m+1}=X_m-1$} \\
\end{cases} 
\quad \mbox{for $m < \tau$,}
\]
we can rewrite
\begin{align}
\label{ineq:ERRWThetanVervoort}
\Theta_n = \sum_{x=0}^{X_n-1} \sum_{\ell=0}^{\phi_n(x)-1} \dfrac{(-1)^{\ell}}{f(\ell,x)} \qquad \mbox{for $n<\tau$.}
\end{align}
Notice that if $n<\tau$ and $0 \leq x < X_n$, then $\phi_n(x)$ is an odd integer. Eq. \eqref{ineq:ERRWThetanVervoort} is inspired by the proof of Theorem 8.2.2 in \cite{Vervoort00}, and is used in \cite{ACT19}.

Now fix $x \in \mathbb{Z}_+$, and let
\begin{align}
s_j =s_j(x):= \sum_{\ell=0}^{j-1} \dfrac{(-1)^{\ell}}{f(\ell,x)} \qquad \mbox{for $j \in \mathbb{Z}_+$.}
\label{def:ERRWThetanVervoorts_j}
\end{align}
Since $f(\ell,x)$ is non-decreasing in $\ell$, we have
\begin{align}
0 \leq s_2 \leq s_4 \leq \cdots \leq s_{2k} \leq s_{2k+2} \leq \cdots \leq s_{2k+1} \leq s_{2k-1} \leq \cdots \leq s_3 \leq s_1. %= \dfrac{1}{f(0,x)}.
\label{ineq:ERRWsjOrder}
\end{align}
Set $\displaystyle s_{\infty} := \lim_{k \to \infty} s_{2k}$. Note that
\begin{align}
 s_{2k} \leq s_{\infty} \leq s_{2k-1} \qquad \mbox{for $k \in \mathbb{N}$.}
 \label{ineq:ERRWsjOrder2}
\end{align}
%If $f(\ell,x)$ diverges as $\ell \to \infty$, then $s_{\infty}$ is the limit of the full sequence $\{s_j\}$.
%, and 
%\[ s_{2k-1} \geq s_{\infty} \qquad \mbox{for $k=1,2,\cdots$.} \]
%(This is nothing but Leibniz's alternating series test.) 

The initial transience is always preserved for ERRWs on the half-line. 

\begin{theorem}[cf. \cite{Takeshima01}, Corollary 1.2] Consider the ERRW $\boldsymbol{X}$ on $\mathbb{Z}_+$, and assume that $\Phi_x  = +\infty $ for all $x \in \mathbb{Z}_+$. If $F_0<+\infty$, then $\boldsymbol{X}$ is transient a.s..
\end{theorem}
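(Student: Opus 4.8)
The plan is to rule out recurrence and then invoke Lemma~\ref{Sellke} (whose hypothesis $\Phi_x=+\infty$ for all $x$ is assumed) to conclude transience. Everything runs off a single process: extending the sum in \eqref{ineq:ERRWThetanVervoort} over all edges and keeping it running for all $n$, set
\[
 \Theta_n := \sum_{x=0}^{\infty}\sum_{\ell=0}^{\phi_n(x)-1}\frac{(-1)^{\ell}}{f(\ell,x)} = \sum_{x=0}^{\infty} s_{\phi_n(x)}(x)\qquad(n\ge0),
\]
with $s_j(x)$ as in \eqref{def:ERRWThetanVervoorts_j}. Two elementary facts drive the argument. First, \emph{boundedness}: by \eqref{ineq:ERRWsjOrder} one has $0\le s_j(x)\le s_1(x)=1/f(0,x)$ for every $j\ge0$, hence $0\le\Theta_n\le F_0<\infty$ for all $n$; this is the only point at which $F_0<+\infty$ enters. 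Second, the \emph{increment}: computing $\Theta_{n+1}-\Theta_n$ edge by edge, and using that $\phi_n(x)$ is odd exactly for $0\le x<X_n$, one finds that $\Theta_{n+1}-\Theta_n$ equals $+1/w_n(X_n)$ on an up-step and $-1/w_n(X_n-1)$ on a down-step; by the transition rule \eqref{eq:ERRWtransition} this has conditional mean zero given $X_0,\dots,X_n$ when $X_n\ge1$, whereas when $X_n=0$ the reflection forces an up-step and $\Theta_{n+1}-\Theta_n=1/w_n(0)>0$. Therefore, with
\[
 A_n:=\sum_{m=0}^{n-1}\frac{1}{w_m(0)}\,1_{\{X_m=0\}},
\]
the process $\Theta_n-A_n$ is a martingale (integrability being clear from $0\le\Theta_n\le F_0$ and $0\le A_n\le n/f(0,0)$), and it is bounded above by $F_0$.

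Next I would evaluate $A_\infty$ on the event $\{\boldsymbol{X}\ \text{recurrent}\}$. There the walk sits at $0$ at an infinite increasing sequence of finite times $0=t_0<t_1<t_2<\cdots$, and between two consecutive such times the edge $\{0,1\}$ is traversed exactly twice (once up at the start of the excursion, once down at its end), so $\phi_{t_j}(0)=2j$ and $w_{t_j}(0)=f(2j,0)$. Hence $A_n\uparrow\sum_{j\ge0}1/f(2j,0)$ on that event. Since $f(\ell,0)$ is non-decreasing in $\ell$, $1/f(2j+1,0)\le 1/f(2j,0)$, so
\[
 \Phi_0=\sum_{\ell=0}^{\infty}\frac{1}{f(\ell,0)}\ \le\ 2\sum_{j=0}^{\infty}\frac{1}{f(2j,0)},
\]
and the hypothesis $\Phi_0=+\infty$ forces $\sum_{j\ge0}1/f(2j,0)=+\infty$, i.e.\ $A_n\to\infty$ on $\{\boldsymbol{X}\ \text{recurrent}\}$.

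Finally I would combine the two: $F_0-(\Theta_n-A_n)$ is a non-negative martingale, hence converges a.s.\ to a finite limit, so $\Theta_n-A_n$ converges a.s.\ in $\mathbb{R}$. But on $\{\boldsymbol{X}\ \text{recurrent}\}$ one has $\Theta_n-A_n\le F_0-A_n\to-\infty$, so that event is null. Since $\boldsymbol{X}$ is recurrent a.s.\ or transient a.s.\ by Lemma~\ref{Sellke}, and the former is now excluded, $\boldsymbol{X}$ is transient a.s.

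The step requiring care --- though it is ultimately routine bookkeeping --- is the increment computation and the ensuing decomposition: one must check that at the reflecting endpoint the forced up-step injects a deterministic positive increment $1/w_n(0)$ into $\Theta_n$ (these are exactly what $A_n$ accumulates), and track the parity of $\phi_n(x)$ carefully enough to see that every other increment has conditional mean zero. The remaining ingredients --- boundedness via \eqref{ineq:ERRWsjOrder}, the identity $\phi_{t_j}(0)=2j$, and the monotonicity bound $\Phi_0\le2\sum_j 1/f(2j,0)$ --- are elementary.
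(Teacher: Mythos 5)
Your argument is correct, and it reaches the conclusion by a genuinely different mechanism than the paper's, even though both run on the same alternating-sum process. The paper works with Davis's corrected martingale stopped at the first return time $\tau$ to the origin: for $n<\tau$ it is bounded by $F_0$, so bounded martingale convergence gives $E[\lim_n\Theta_n]=E[\Theta_1]=1/f(0,0)>0$, from which $P(\tau<\infty)<1$ is deduced, and Sellke's 0--1 law (Lemma \ref{Sellke}) finishes; the hypothesis $\Phi_x=+\infty$ enters there only through that lemma. You never stop the process: you extend \eqref{ineq:ERRWThetanVervoort} to a sum over \emph{all} edges (which is in fact the exact form of Davis's $\Theta_n$ for $n\le\tau$; the restriction to $x<X_n$ in the paper omits the nonnegative terms $s_{\phi_n(x)}(x)$ for already-crossed edges above the current position, harmlessly for the bound $\Theta_n\le F_0$), and you isolate the forced up-steps at the reflecting origin into the compensator $A_n$, so that $\Theta_n-A_n$ is a martingale bounded above by $F_0$. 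Non-recurrence then comes from $A_n\to\infty$ on the recurrence event, which is where you use $\Phi_0=+\infty$ together with the parity identity $\phi_{t_j}(0)=2j$; Sellke's lemma is then used exactly as in the paper to upgrade ``recurrence is null'' to ``transient a.s.''. What your route buys: it is self-contained (the martingale property is checked directly rather than quoted from Lemma 3.0 of \cite{Davis90}), it shows explicitly where the standing assumption $\Phi_x=+\infty$ is needed beyond the 0--1 law, and it avoids having to evaluate $\lim_n\Theta_n$ on $\{\tau<\infty\}$ --- a point the paper treats very tersely, since with reinforcement the stopped limit $\Theta_\tau$ equals the accumulated (positive) correction terms rather than zero, so your divergent-compensator contradiction is arguably the more transparent way to rule out returns. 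The cost is only the extra parity and excursion bookkeeping at the origin, which you carry out correctly.
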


\begin{proof} If $n<\tau$, then by \eqref{ineq:ERRWThetanVervoort} and \eqref{ineq:ERRWsjOrder},
\begin{align*}
\Theta_n &=\sum_{x=0}^{X_n-1} s_{\phi_n(x)}(x)   \leq \sum_{x=0}^{X_n-1} s_1(x) = \sum_{x=0}^{X_n-1} \dfrac{1}{f(0,x)} \leq F_0<+\infty.
\end{align*}
%Fix $x \in \mathbb{Z}_+$ and suppose that $\phi_n(x)=2k+1$ for some $k \in \mathbb{Z}_+$. 
%Noting that $f(\ell,x)$ is non-decreasing in $\ell$, we have
%\begin{align*}
%\sum_{\ell=0}^{\phi_n(x)-1} \dfrac{(-1)^{\ell}}{f(\ell,x)} &= \dfrac{1}{f(0,x)}+\sum_{m=1}^k \left(\dfrac{-1}{f(2m-1,x)} + \dfrac{1}{f(2m,x)}\right)\leq \dfrac{1}{f(0,x)},
%\intertext{and}
%\Theta_n &= \sum_{x=0}^{X_n-1} \sum_{\ell=0}^{\phi_n(x)-1} \dfrac{(-1)^{\ell}}{f(\ell,x)} \leq \sum_{x=0}^{X_n-1} \dfrac{1}{f(0,x)}.
%\end{align*}
%By the assumption $F_0<+\infty$, 
Thus the martingale $\{\Theta_n\}$ is uniformly bounded, and the martingale convergence theorem shows that
\begin{align*}
E\left[ \lim_{n \to \infty} \Theta_n \right] =  \lim_{n \to \infty} E[\Theta_n]= E[\Theta_1] = \dfrac{1}{f(0,x)}>0.  
\end{align*}
This implies that $P(\tau<+\infty)<1$. By Lemma \ref{Sellke}, $\boldsymbol{X}$ is transient a.s..
\hfill
\end{proof}

The recurrence preservation is a much more delicate issue, and is explored in \cite{Davis90,Davis89,Vervoort00,Takeshima01,ACT19}. Here is the simplest case where the initial recurrence is also preserved (cf. Corollary 4.2 in \cite{Takeshima01}): If the reinforcement scheme is `up-only', that is, $\{ \mathbf{f}_x\}_{x \in \mathbb{Z}_+}$ satisfies 
\[ f(2m,x)=f(2m-1,x) \quad \mbox{for all $m \in \mathbb{N}$ and $x \in \mathbb{Z}_+$,} \]
then 
\begin{align*}
s_{2k-1} (x) &=\dfrac{1}{f(0,x)} - \sum_{m=1}^{k-1} \left(\dfrac{1}{f(2m-1,x)}-\dfrac{1}{f(2m,x)}\right) \\
&= \dfrac{1}{f(0,x)}\quad \mbox{for any $k \in \mathbb{N}$}, 
\end{align*} 
and
\begin{align*}
\Theta_n = \sum_{x=0}^{X_n-1} \dfrac{1}{f(0,x)}\quad \mbox{for $n < \tau$.}
\end{align*}
Let $E:=\left\{ \mbox{$\tau=+\infty$,\, $\displaystyle \lim_{n \to \infty} X_n=+\infty$} \right\}$, and assume that $\Phi_x  = +\infty $ for all $x \in \mathbb{Z}_+$. If $F_0=+\infty$, then $P(E)$ cannot be positive, and $\boldsymbol{X}$ is recurrent a.s., by Lemma \ref{Sellke}. %This essentially shows Corollary 4.2 in \cite{Takeshima01}. 

Now we show the recurrence preservation for the LERRW.

\begin{theorem}[cf. \cite{Takeshima00}, Theorem 4.1, part 2] Consider the LERRW $\boldsymbol{X}$ on $\mathbb{Z}_+$ with the reinforcement parameter $\Delta>0$.
%, i.e. $f(\ell,x)=f(0,x) + \ell \Delta$ with $\Delta >0$. 
If $F_0=+\infty$, then $\boldsymbol{X}$ is recurrent a.s..
\end{theorem}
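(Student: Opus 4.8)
The plan is to follow the pattern used above for the up-only case: writing $E := \{\tau = +\infty,\ \lim_{n} X_n = +\infty\}$, it suffices to show $P(E) = 0$, for then Lemma~\ref{Sellke} --- applicable since $\Phi_x = \sum_{\ell \ge 0}(f(0,x)+\ell\Delta)^{-1} = +\infty$ for every $x$ when $\Delta > 0$ --- rules out transience (a transient walk on $\mathbb{Z}_+$ would have $P(\tau = +\infty) > 0$, hence $P(E) > 0$) and forces $\boldsymbol{X}$ to be recurrent a.s. The starting point is \eqref{ineq:ERRWThetanVervoort}: for $n < \tau$ one has $\Theta_n = \sum_{x=0}^{X_n-1} s_{\phi_n(x)}(x)$, and since $\phi_n(x)$ is odd whenever $0 \le x < X_n$ and $n < \tau$, the ordering \eqref{ineq:ERRWsjOrder2} yields $s_{\phi_n(x)}(x) \ge s_\infty(x)$, hence $\Theta_n \ge \sum_{x=0}^{X_n-1} s_\infty(x)$ for $n < \tau$.

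The heart of the matter is a sharp lower bound on $s_\infty(x)$. For the LERRW, $f(\ell,x) = f(0,x) + \ell\Delta$, so $s_\infty(x) = \sum_{\ell=0}^{\infty}(-1)^\ell / (f(0,x)+\ell\Delta)$; inserting $1/(f(0,x)+\ell\Delta) = \int_0^1 t^{f(0,x)+\ell\Delta-1}\,dt$ and summing the geometric series $\sum_{\ell\ge0}(-t^\Delta)^\ell = (1+t^\Delta)^{-1}$ gives
\[ s_\infty(x) = \int_0^1 \frac{t^{f(0,x)-1}}{1+t^\Delta}\,dt \ \ge\ \frac{1}{2}\int_0^1 t^{f(0,x)-1}\,dt = \frac{1}{2\,f(0,x)}, \]
because $1+t^\Delta \le 2$ on $[0,1]$. (Equivalently, $s_\infty(x) = \tfrac{1}{2\Delta}\{\Psi(\tfrac{f(0,x)}{2\Delta}+\tfrac12) - \Psi(\tfrac{f(0,x)}{2\Delta})\}$, and one may invoke \eqref{ineq:Takeshima00(4.10)bis3}.) Therefore, on $E$ --- where $\tau = +\infty$ and $X_n \to \infty$ ---
\[ \Theta_n \ \ge\ \sum_{x=0}^{X_n-1} s_\infty(x) \ \ge\ \frac{1}{2}\sum_{x=0}^{X_n-1}\frac{1}{f(0,x)} \ \longrightarrow\ \frac{1}{2}F_0 = +\infty . \]
But $\{\Theta_n\}$ is a nonnegative martingale, so it converges $P$-a.s. to a finite limit; on $E$ this limit would equal $+\infty$, and therefore $P(E) = 0$, which completes the argument.

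The step I expect to be the main obstacle is obtaining the bound $s_\infty(x) \ge c/f(0,x)$ with the correct power of $f(0,x)$, rather than the naive $s_\infty(x) \ge \Delta/\{f(0,x)(f(0,x)+\Delta)\}$ obtained by stopping the alternating series after its first pair of terms. The naive bound only gives $\sum_x s_\infty(x) \asymp \sum_x f(0,x)^{-2}$, which may converge --- precisely in the critical regime $\alpha = 1$, where $f(0,x) = x$ and $\sum_x x^{-2} < \infty$ although $F_0 = +\infty$ --- so it is too weak to contradict the martingale convergence. The integral representation (equivalently, the digamma inequality $\Psi(z+\tfrac12) - \Psi(z) \ge \tfrac{1}{2z}$ already recorded above) is what recovers the order $f(0,x)^{-1}$ and makes $\sum_x s_\infty(x)$ diverge exactly when $F_0$ does.
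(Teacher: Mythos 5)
Your argument is correct and is essentially the paper's own proof: the same nonnegative martingale $\Theta_n$, the same reduction via \eqref{ineq:ERRWThetanVervoort} and \eqref{ineq:ERRWsjOrder2} to a lower bound $s_\infty(x)\ge c/f(0,x)$ (the paper's \eqref{ineq:1dLERRW190906main}), the same contradiction between $\Theta_n\to\infty$ on $E$ and almost sure convergence of a nonnegative martingale, and the same appeal to Lemma \ref{Sellke}. The only difference is how the key estimate is derived: the paper bounds $s_{2k}(x)$ from below by an integral comparison and uses $\tfrac12\log\tfrac{1+t}{1-t}\ge t$ to obtain $s_\infty(x)\ge 1/(2f(0,x)+\Delta)$, whereas you sum the geometric series under the representation $1/(f(0,x)+\ell\Delta)=\int_0^1 t^{f(0,x)+\ell\Delta-1}\,dt$ (equivalently invoke the digamma bound \eqref{ineq:Takeshima00(4.10)bis3}) to get the closed form $s_\infty(x)=\int_0^1 t^{f(0,x)-1}(1+t^\Delta)^{-1}\,dt\ge 1/(2f(0,x))$, a slightly cleaner and marginally sharper constant; both bounds have divergent sum exactly when $F_0=+\infty$, so either closes the proof, and your observation that the naive two-term truncation only yields order $f(0,x)^{-2}$ (insufficient at $\alpha=1$) correctly identifies why the sharper estimate is needed.
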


\begin{proof} Fix $x \in \mathbb{Z}_+$. We use $\{s_j(x)\}$ defined by \eqref{def:ERRWThetanVervoorts_j}. %let
%\begin{align*}
%s_j := \sum_{\ell=0}^{j-1} \dfrac{(-1)^{\ell}}{f(\ell,x)} \qquad \mbox{for $j=0,1,2,\cdots$.}
%\end{align*}
%Note that
%\begin{align*}
%0<s_2 < s_4 < \cdots < s_{2k} < s_{2k+2} < \cdots < s_{2k+1} < s_{2k-1} < \cdots < s_3 < s_1 = \dfrac{1}{f(0,x)}.
%general
%s_2 \leq s_4 \leq \cdots \leq s_{2k} \leq s_{2k+2} \leq \cdots \leq s_{2k+3} \leq s_{2k+1} \leq \cdots \leq s_3 \leq s_1 = \dfrac{1}{f(0,x)}.
%\end{align*}
%Since $f(\ell,x)=f(0,x)+\ell \Delta$ diverges as $\ell \to \infty$, %the limit $\displaystyle s_{\infty} := \lim_{j \to \infty} s_j$ exists, and satisfies that
%\begin{align*}
% s_{2k-1} > s_{\infty}\quad \mbox{and} \quad s_{2k} < s_{\infty} \qquad \mbox{for $k=1,2,\cdots$.}
%\end{align*}
%If $f(0,x) \equiv 1$ for all $x \in \mathbb{Z}_+$ and $\Delta=1$, then the limiting value is $\log 2$. 
Our claim is 
\begin{align}
s_{\infty}(x) \geq \dfrac{1}{2f(0,x)+\Delta}\qquad \mbox{for each $x \in \mathbb{Z}_+$.} 
\label{ineq:1dLERRW190906main}
\end{align}
Fix an arbitrary $x \in \mathbb{Z}_+$. Since
\begin{align*}
&\dfrac{d}{dm}\left( \dfrac{1}{f(0,x)+2m\Delta} - \dfrac{1}{f(0,x)+(2m+1)\Delta}\right) \\
&= -\dfrac{2\Delta}{\{f(0,x)+2m\Delta\}^2} + \dfrac{2\Delta}{\{f(0,x)+(2m+1)\Delta\}^2} <0,
\end{align*}
we can see that 
\begin{align*}
s_{2k}(x)&= \sum_{m=0}^{k-1} \left( \dfrac{1}{f(0,x)+2m\Delta} - \dfrac{1}{f(0,x)+(2m+1)\Delta}\right) \\
&\geq \int_0^k \left( \dfrac{1}{f(0,x)+2u\Delta} - \dfrac{1}{f(0,x)+(2u+1)\Delta}\right)\,du \\
&= \left[ \dfrac{1}{2\Delta}\log \dfrac{f(0,x)+2u\Delta}{f(0,x)+(2u+1)\Delta} \right]_{u=0}^{u=k} \\ 
&= \dfrac{1}{2\Delta} \log \dfrac{\{ f(0,x)+\Delta \}\{ f(0,x)+2k\Delta \}}{f(0,x)\{ f(0,x)+(2k+1)\Delta \}} \\
&= \dfrac{1}{2\Delta} \log \dfrac{f(0,x)^2 + (2k+1)\Delta f(0,x) + 2k\Delta^2}{f(0,x)^2 + (2k+1)\Delta f(0,x)}. %\\
%&= \dfrac{1}{2\Delta} \log \dfrac{f(0,x)^2 + (2k+1)\Delta f(0,x) + k\Delta^2+k\Delta^2}{f(0,x)^2 + (2k+1)\Delta f(0,x)+k\Delta^2-k\Delta^2}
\end{align*}
%\begin{quote}
%{\bf Note.} $\displaystyle \dfrac{1}{2} \log \dfrac{1+t}{1-t} = \sum_{m=0}^{\infty} \dfrac{x^{2m+1}}{2m+1}$ for $|t|<1$. 
%\begin{align*}
%\dfrac{d}{dt} \left(\dfrac{1}{2} \log \dfrac{1+t}{1-t} - t\right) = \dfrac{1}{2}\left(\dfrac{1}{1+t}+\dfrac{1}{1-t}\right) -1 = \dfrac{t^2}{1-t^2}.
%\end{align*}
%\end{quote}
Using
\begin{align*}
\dfrac{1}{2} \log \dfrac{1+t}{1-t} \geq t \qquad \mbox{for $t \in (0,1)$},
\end{align*}
we have
\begin{align*}
%&\sum_{m=0}^{k-1} \left( \dfrac{1}{f(0,x)+2m\Delta} - \dfrac{1}{f(0,x)+(2m+1)\Delta}\right) \\
s_{2k}(x) &\geq \dfrac{1}{\Delta} \cdot \dfrac{k\Delta^2}{f(0,x)^2 + (2k+1)\Delta f(0,x) + k\Delta^2}\qquad \mbox{for $k \in \mathbb{N}$.} 
\end{align*}
Letting $k \to \infty$, we obtain \eqref{ineq:1dLERRW190906main}.
%\begin{align*}
%\sum_{m=0}^{\infty} \left( \dfrac{1}{f(0,x)+2m\Delta} - \dfrac{1}{f(0,x)+(2m+1)\Delta}\right) 
%\geq \dfrac{1}{2f(0,x)+\Delta}. %\dfrac{\Delta}{2\Delta f(0,x) + \Delta^2}
%\end{align*}

By \eqref{ineq:ERRWsjOrder2} and \eqref{ineq:1dLERRW190906main}, we have
\begin{align*}
\Theta_n = \sum_{x=0}^{X_n-1} s_{\phi_n(x)}(x) \geq \sum_{x=0}^{X_n-1} \dfrac{1}{2f(0,x)+\Delta}\qquad \mbox{for $n < \tau$.}
\end{align*}
If $F_0=+\infty$, then $P(E)$ cannot be positive.
%This implies that $P(E)=0$
%\begin{align*}
%P\left(\mbox{$\tau=+\infty$,\, $\displaystyle \lim_{n \to \infty} X_n=+\infty$}\right)=0
%\end{align*}
%when $F_0=+\infty$. 
The conclusion follows from Lemma \ref{Sellke}.
\hfill
\end{proof}

\end{document}